\documentclass[11pt,twoside,reqno,psamsfonts]{amsart}

\usepackage[OT1]{fontenc}
\usepackage{type1cm}
\usepackage{amssymb}
\usepackage[left=2.3cm,top=3cm,right=2.3cm]{geometry}

\geometry{a4paper,centering}

\numberwithin{equation}{section}

\theoremstyle{plain}
\newtheorem{theorem}{Theorem}[section]

\newtheorem{corollary}[theorem]{Corollary}

\newtheorem{proposition}[theorem]{Proposition}

\newtheorem{lemma}[theorem]{Lemma}

\theoremstyle{remark}
\newtheorem{remark}[theorem]{Remark}

\newtheorem{ex}[theorem]{Example}

\theoremstyle{definition}

\newcommand{\BB}{\mathcal{B}}

\newcommand{\HH}{\mathcal{H}}

\newcommand{\MM}{\mathcal{M}}

\newcommand{\QQ}{\mathcal{Q}}
\newcommand{\WW}{\mathcal{W}}

\newcommand{\R}{\mathbb{R}}

\newcommand{\Z}{\mathbb{Z}}
\newcommand{\N}{\mathbb{N}}

\newcommand{\roo}{\varrho}

\DeclareMathOperator{\dist}{dist}
\DeclareMathOperator{\diam}{diam}
\DeclareMathOperator{\proj}{proj}

\DeclareMathOperator{\length}{length}

\DeclareMathOperator{\dima}{dim_A}
\DeclareMathOperator{\udima}{\overline{dim}_A}
\DeclareMathOperator{\ldima}{\underline{dim}_A}

\DeclareMathOperator{\ucodima}{\overline{co\,dim}^{\,\mu}_A}
\DeclareMathOperator{\lcodima}{\underline{co\,dim}^{\,\mu}_A}

\DeclareMathOperator{\dimreg}{dim_{reg}}
\DeclareMathOperator{\udimreg}{\overline{dim}_{reg}}
\DeclareMathOperator{\ldimreg}{\underline{dim}_{reg}}

\DeclareMathOperator{\dimh}{dim_H}

\DeclareMathOperator{\dimm}{dim_M}
\DeclareMathOperator{\udimm}{\overline{dim}_M}
\DeclareMathOperator{\ldimm}{\underline{dim}_M}

\DeclareMathOperator{\ucodimm}{\overline{co\,dim}^{\,\mu}_M}
\DeclareMathOperator{\lcodimm}{\underline{co\,dim}^{\,\mu}_M}

\DeclareMathOperator{\ldims}{\underline{dim}_S}

\newcommand{\bdry}{\partial}

\newcommand{\Ha}{\mathcal H}

\newcommand{\Mi}{\mathcal M}

\newcommand{\sub}{\subset}

\newcommand{\Char}[1]{\chi_{\lower 1.5pt\hbox{$\scriptscriptstyle #1$}}}

\newcommand{\capp}[1]{\operatorname{cap}_{#1}}

\providecommand{\ch}[1]{\text{\raise 2pt \hbox{$\chi$}\kern-0.2pt}_{#1}}

\begin{document}

\title[Dimensions, Whitney covers, and tubular neighborhoods]{Dimensions, Whitney covers, and tubular neighborhoods}

\author{Antti K\"aenm\"aki}
\author{Juha Lehrb\"ack}
\address{Department of Mathematics and Statistics \\
         P.O. Box 35 (MaD) \\
         FI-40014 University of Jyv\"askyl\"a \\
         Finland}
\email{antti.kaenmaki@jyu.fi}
\email{juha.lehrback@jyu.fi}

\author{Matti Vuorinen}
\address{Department of Mathematics and Statistics \\
         University of Turku \\
         Assistentinkatu 7 \\
         FI-20014 Turku \\
         Finland}
\email{vuorinen@utu.fi}

\thanks{JL acknowledges the support of the Academy of Finland (project \# 252108), MV acknowledges the support of the Academy of Finland (project \#  2600066611) }
\subjclass[2000]{Primary 28A75, 54E35; Secondary 54F45, 28A80}
\keywords{Minkowski dimension, Assouad dimension, Whitney cover, doubling metric space, porosity, parallel set}
\date{\today}

\begin{abstract}
  Working in doubling metric spaces, we examine the connections between different dimensions, Whitney covers, and geometrical properties of tubular neighborhoods. In the Euclidean space, we relate these concepts to the behavior of the surface area of the boundaries of parallel sets. In particular, we give characterizations for the Minkowski and the spherical dimensions by means of the Whitney ball count.
\end{abstract}

\maketitle

\section{Introduction}

Various notions of dimension reflect the structure of sets in different ways. In this article, the main interest is directed towards the Minkowski and Assouad dimensions. Common to both of these is that they are defined using covers consisting of balls with a fixed radius, contrary to the Hausdorff dimension, where more flexibility for covers is allowed.
The main difference between Minkowski and Assouad dimensions is, roughly speaking, that the former is related to the average small scale structure of sets, while the latter depends on the extreme properties of sets and takes into account all scales.
The goal of this work is to understand the relationships between these dimensions and the behavior of (tubular or annular or spherical) neighborhoods of a given set. In this respect, also the role of geometrical conditions, such as porosity and uniform perfectness, is examined. Our study continues and combines e.g.\ the works of \cite{Aikawa1991, AikawaEssen1996, Assouad1983, Bouligand1928, JarviVuorinen1996, LehrbackTuominen2012, Luukkainen1998, MartioVuorinen1987, RatajWinter2010, Vuorinen1987}.
A common theme to most of our results is that we explore means to obtain knowledge on the internal structure of a closed set $E\sub X$ when only external information is available.
Such external information can be given, for instance, in terms of a Whitney type cover of the complement $X\setminus E$.  One motivation for this kind of study originates from the geometric function theory: How are good properties of domains related to the behavior of their boundaries?

In the first part of the article, Sections \ref{sect:metric} and \ref{sect:metric measure}, we recall definitions and preliminary results, but also record some new observations concerning dimensions and geometric conditions. More precisely,
in Section \ref{sect:metric} we review the setting of a doubling metric space and the notions of different dimensions, and recall some of the interrelations and other basic properties of these dimensions. Whitney covers, generalizations of the classical Whitney decompositions (see \cite{Stein1970}) to more general metric spaces, are reviewed at the end of Section \ref{sect:metric}; recall that Whitney covers are a standard tool in analysis, used for instance in various extension theorems and in the study of singular integrals.

Most of Section \ref{sect:metric measure} deals with the interplay between dimensions, codimensions, and doubling measures. As a particular by-product of these considerations, we improve the classical dimension estimate of porous sets: If the space is $s$-regular, then the upper Assouad dimension of $E$ is at most $s-c\roo^s$ for all $\roo$-porous $E$; cf.\ \cite[Theorem 4.7]{JarvenpaaJarvenpaaKaenmakiRajalaRogovinSuomala2007}.

In Section \ref{sect:whitney}, we perform a systematic study on the relations between the Whitney ball count, Minkowski dimensions, and Assouad dimensions. In the Euclidean case, the connection between the upper Minkowski dimension and upper bounds for Whitney cube count was considered in \cite{MartioVuorinen1987}. Besides extending these results and their local counterparts to more general spaces, we also establish corresponding results for lower bounds and lower dimensions. We remark here that the porosity of the set $E$ plays an important role when lower bounds for Whitney ball count are considered. In particular, we characterize the Minkowski dimensions of a compact porous set $E$ by the limiting behavior of the Whitney ball count of the complement of $E$.

The final Section \ref{sect:tube} is devoted to another type of external information, special to the Euclidean space $\R^d$. Namely, here we study the behavior of the boundaries of the parallel sets $E_r$, the so-called $r$-boundaries, of closed sets $E\sub\R^d$. For earlier results concerning these sets, see for instance \cite{Brown1972, Ferry1975, Fu1985, GariepyPepe1972, Jarvi1994, OleksivPesin1985, RatajWinter2010, Stacho1976, Winter2011}. Continuing this line of research, we show that there is in fact an intimate connection between the `surface area' $\HH^{d-1}(\partial E_r)$ and the Whitney ball count of $\R^d\setminus E$. One particular outcome of this connection is the result that if $E \subset \R^d$ is compact and $s$-regular for $0<s<d$, then $cr^{d-1-s} \le \HH^{d-1}(\partial E_r) \le Cr^{d-1-s}$ for all sufficiently small $r>0$. As another application we examine the properties of the spherical dimensions of $E$, defined by the limiting behavior of $\HH^{d-1}(\partial E_r)$. We answer an open
question of Rataj and Winter \cite{RatajWinter2010} by constructing a compact set having zero measure and lower Minkowski dimension $d$ but lower spherical dimension $d-1$. Our construction also shows, answering a question of Winter \cite{Winter2011}, that the existence of the Minkowski dimension does not guarantee the equivalence of the lower Minkowski dimension and the lower spherical dimension. Moreover, unlike in the case of the Minkowski dimension, the limiting behavior of the Whitney ball count of the complement of $E$ characterizes the spherical dimensions for all compact sets $E\sub\R^d$.

\section{Dimensions in metric spaces}\label{sect:metric}

Unless otherwise explicitly stated, we work on a \emph{doubling metric space} $(X,d)$: there is $N = N(X) \in \N$ so that any closed ball $B(x,r)$ of center $x$ and radius $r>0$ can be covered by $N$ balls of radius $r/2$. Notice that even if $x \ne y$ or $r\neq t$, it may happen that $B(x,r)=B(y,t)$. For notational convenience, we keep to the convention that each ball comes with a fixed center and radius. This makes it possible to use notation such as $2B=B(x,2r)$ without referring to the center or radius of the ball $B=B(x,r)$. For convenience, we also make the general assumption that the space $X$ contains at least two points.

A doubling metric space is clearly separable. Hence for each $r > 0$ and $ E \subset X$ there exists a maximal $r$-packing of $E$. Recall that a countable collection $\BB$ of pairwise disjoint balls centered at $E$ with radius $r>0$ is called an \emph{$r$-packing} of $E$. It is \emph{maximal} if for each $x \in E$ there is $B \in \BB$ so that $B(x,r) \cap B \ne \emptyset$. We frequently use the fact that if $\{ B_i \}_i$ is a maximal packing of $E$, then $\{ 2B_i \}_i$ is a cover of $E$.

Applying the doubling condition inductively, it follows that there is a constant $C \ge 1$ such that each ball $B(x,R)$ can be covered by at most $C(r/R)^{-s}$ balls of radius $r$ for all $0<r<R<\diam(X)$, where $s = \log_2 N$. Of course, this can also be true for smaller values of $s$. The infimum of such admissible exponents $s$ is called the \emph{upper Assouad dimension} of $X$. Therefore, doubling metric spaces are precisely the metric spaces with finite upper Assouad dimension. Considering the restriction metric, the definition extends to all subsets of $X$. The upper Assouad dimension of $E \subset X$ is denoted by $\udima(E)$.

The above intrinsically metric definition of dimension is essentially due to Assouad (see e.g.\ \cite{Assouad1983}), but, according to \cite[Remark 3.6]{Luukkainen1998}, the origins of a related concept date back to Bouligand \cite{Bouligand1928} whose definition (for $E\sub\R^d$) was external and used also the Lebesgue measure near the set $E$. We refer to Luukkainen \cite{Luukkainen1998} for the basic properties and a historical account on the upper Assouad dimension. We also remark that in the literature, the upper Assouad dimension is more commonly referred to as Assouad dimension.

Conversely to the above definition, we may also consider all $t \ge 0$ for which there is a constant $c>0$ so that if $0<r<R<\diam(X)$, then for every $x \in X$ at least $c(r/R)^{-t}$ balls of radius $r$ are needed to cover $B(x,R)$. We call the supremum of all such $t$ the \emph{lower Assouad dimension} of $X$. Again, the restriction metric is used to define the lower Assouad dimension $\ldima(E)$ of a subset $E\sub X$. If $\ldima(E)=\udima(E)=s$, then we say that $E$ is \emph{uniformly $s$-homogeneous} and we denote $s$ by $\dima(E)$.

A metric space $X$ is \emph{uniformly perfect} if there exists a constant $C \ge 1$ so that for every $x \in X$ and $r>0$ we have $B(x,r) \setminus B(x,r/C) \ne \emptyset$ whenever $X \setminus B(x,r) \ne \emptyset$.

\begin{lemma}\label{lemma:unif.perf}
  A metric space $X$ is uniformly perfect if and only if $\ldima(X) > 0$.
\end{lemma}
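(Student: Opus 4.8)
The plan is to prove the equivalence by establishing each implication separately, using the fact that both conditions can be reformulated as a uniform statement about the number of balls needed to cover a given ball at a fixed scale ratio.

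First I would prove that $\ldima(X) > 0$ implies uniform perfectness, which is the easier direction. Suppose $X$ is not uniformly perfect; then for every $n \in \N$ there exist $x_n \in X$ and $r_n > 0$ with $X \setminus B(x_n, r_n) \neq \emptyset$ but $B(x_n, r_n) \setminus B(x_n, r_n / n) = \emptyset$. This means the annulus is empty, so $B(x_n, r_n) = B(x_n, r_n/n) = \cdots$, and in particular the ball $B(x_n, r_n)$ contains no points at scales between $r_n/n$ and $r_n$. Pick a point $y_n \in X \setminus B(x_n, r_n)$; then $R_n := \tfrac{1}{2} d(x_n, y_n) \ge \tfrac{1}{2} r_n$, and the ball $B(x_n, R_n)$, while possibly large, has the property that $B(x_n, R_n) = B(x_n, r_n/n)$ as sets, so it can be covered by a single ball of radius $r_n/n$. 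If $\ldima(X) = t > 0$, then with $r = r_n/n$ and $R = R_n$ we would need at least $c(r/R)^{-t} \ge c(r_n/(nR_n))^{-t}$ balls; but $R_n \le d(x_n,y_n)$ can be bounded in terms of $r_n$ only if we know more, so instead I would apply the covering lower bound directly to the pair $(r_n/n, r_n)$: since $B(x_n, r_n)$ equals $B(x_n, r_n/n)$, one ball of radius $r_n/n$ suffices, yet the lower Assouad condition demands at least $c\, n^{t}$ such balls, a contradiction once $n$ is large. One must only ensure $r_n < \diam(X)$, which follows since $y_n \notin B(x_n, r_n)$.

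Next I would prove that uniform perfectness implies $\ldima(X) > 0$. Let $C \ge 1$ be the uniform perfectness constant. The key geometric observation is that in a uniformly perfect space, every ball $B(x, R)$ with $R < \diam(X)$ contains, at every intermediate scale $r < R$, at least two points at distance comparable to $r$ — iterating the uniform perfectness condition produces, for each $k$, a point in the annulus $B(x, R/C^{k}) \setminus B(x, R/C^{k+1})$ provided this outer ball is not all of $X$. Collecting such points for $k = 0, 1, \ldots, m$ where $C^{-m}R \approx r$ gives roughly $m \approx \log_C(R/r)$ points that are pairwise separated by distances that decrease geometrically; more carefully, the points $z_k$ with $R/C^{k+1} \le d(x, z_k) < R/C^k$ satisfy $d(z_j, z_k) \ge R/C^{k+1} - R/C^{j} \gtrsim R/C^{k+1}$ for $j > k+1$, so a suitable subcollection (taking every other index, say) is $r$-separated in the sense needed. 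Any cover of $B(x,R)$ by balls of radius $r$ must then use a separate ball for each of these $\gtrsim \log(R/r)$ well-separated points — but I actually want a power $(r/R)^{-t}$, not a logarithm.

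So the correct execution is a self-similar / multiplicative one: it suffices to show there is an integer $k_0$ and a constant $a \ge 2$ such that every ball $B(x, R)$ with $R < \diam(X)$ requires at least $a$ balls of radius $R/C^{k_0}$ to cover it; then iterating across scales and applying this at each of the $\approx \log_{C^{k_0}}(R/r)$ nested levels yields at least $a^{\log_{C^{k_0}}(R/r)} = (R/r)^{\log_{C^{k_0}} a}$ balls, giving $\ldima(X) \ge \log_{C^{k_0}} a > 0$ with $t = \log_{C^{k_0}} a$ and $c = 1$. To get the single-scale statement, note that uniform perfectness gives a point $z$ with $R/C \le d(x,z) < R$ (taking the radius just below $R$ and using $X \setminus B(x,R) \ne \emptyset$, or adjusting constants if $X$ is bounded near $R$); then $x$ and $z$ both lie in $B(x,R)$ but any ball of radius less than $d(x,z)/2 \ge R/(2C)$ contains at most one of them, so at least two balls of radius $R/(2C)$ are needed — take $k_0$ with $C^{k_0} \ge 2C$ and $a = 2$. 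The main obstacle will be handling the boundary cases cleanly: when $R$ is close to $\diam(X)$ the uniform perfectness hypothesis $X \setminus B(x,r) \ne \emptyset$ may fail for the specific radius one wants, and one must either shrink $R$ by a fixed factor first or verify that the iteration still produces enough separated points; I would deal with this by only invoking uniform perfectness at radii bounded away from $\diam(X)$ by a fixed multiplicative constant, absorbing the loss into $c$.
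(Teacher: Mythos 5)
Your first direction ($\ldima(X)>0$ implies uniform perfectness) is, after the self-correcting detour through $y_n$ and $R_n$, exactly the paper's argument: failure of uniform perfectness gives $B(x_n,r_n)=B(x_n,r_n/n)$ with $r_n<\diam(X)$ (guaranteed by $X\setminus B(x_n,r_n)\neq\emptyset$), so one ball of radius $r_n/n$ covers $B(x_n,r_n)$, contradicting the covering lower bound $c\,n^t$ for large $n$; the paper phrases this with a single constant $C$ chosen so that $C^s\ge 2/c$ and with maximal packings, but the content is identical. For the converse the paper does not argue at all: it simply cites Remark 3.6(iii) of Rajala--Vilppolainen, which yields the explicit bound $\ldima(X)\ge\log 2/\log(2C+1)$. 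Your proposal replaces that citation by a direct construction, which is a legitimate and more self-contained route (it is essentially the standard proof behind the cited remark), and your handling of the boundary issue is fine since $R<\diam(X)$ always gives $X\setminus B(x,R/2)\neq\emptyset$, so uniform perfectness can be invoked at radius $R/2$ at the cost of constants.

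The one genuine weak point is the iteration step in that second direction. From the single-scale statement ``every ball of radius $R<\diam(X)$ needs at least $a=2$ balls of radius $\theta R$'' you cannot simply conclude, by ``applying this at each nested level,'' that $a^m$ balls of radius $\theta^m R$ are needed: covering counts do not multiply across scales, because a single ball at the finest scale may meet several of the intermediate-scale regions unless those regions are separated by much more than the final radius. The correct execution (which your earlier remarks about separated points gesture at) is the tree construction: inside $B(x,R/2)$ find $z$ with $d(x,z)>R/(2C)$, recurse inside balls of radius $\theta R$ around $x$ and $z$, and choose $\theta$ small relative to $1/C$ --- say $\theta\le 1/(8C)$ --- so that after $m$ levels the $2^m$ constructed points remain pairwise separated by a fixed multiple of $\theta^{m-1}R/C$; it is this separated (packing) set, not the covering statement itself, that forces at least $2^m$ balls of radius comparable to $\theta^m R$. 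In particular your choice $C^{k_0}\ge 2C$ is not small enough for the separation bookkeeping to close (the subtree diameters already exceed the branch separation), and the two separated points must be produced inside a concentric half-radius ball so that the whole tree stays in $B(x,R)$. These are repairs rather than a change of strategy, but as written the multiplicative step is asserted, not proved, and it is precisely the heart of this implication.
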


\begin{proof}
  If $X$ contains at least two points and is uniformly perfect with a constant $C \ge 1$, then it follows from \cite[Remark 3.6(iii)]{RajalaVilppolainen2009} that $\ldima(X) \ge \log 2/\log(2C+1) > 0$.

  Suppose that $\ldima(X) > 0$, but $X$ is not uniformly perfect. If $0 < s < \ldima(X)$ and $B$ is any ball of radius $0<R<\diam(X)$, then $\#\BB \ge c(r/R)^{-s}$ for all maximal $r$-packings $\BB$ of $B$. Choosing now $C$ so that $C^s \ge 2/c$, we find $x \in X$ and $R>0$ so that $B(x,R) \setminus B(x,R/C) = \emptyset$. This gives a contradiction since $B(x,R)$ can clearly be covered by one ball of radius $R/C$, but the previous estimate says that each maximal $R/C$-packing of $B(x,R)$ contains at least two balls.
\end{proof}

We will frequently use the Hausdorff dimension $\dimh$ and the $\lambda$-dimensional Hausdorff measure $\HH^\lambda$ and its $r$-content $\HH^\lambda_r$ (cf.\ \cite[\S 4]{Mattila1995}).
Recall also that the \emph{$\lambda$-dimensional Minkowski $r$-content} of a compact set $E \subset X$ is
\begin{equation*}
  \MM^\lambda_r(E) = \inf\{ nr^\lambda : E \subset \bigcup_{k=1}^n B(x_k,r),\; x_k \in E \}
\end{equation*}
and the \emph{upper and lower Minkowski dimensions} are defined, respectively, as
\begin{align*}
  \udimm(E) &= \inf\big\{ \lambda \ge 0 : \limsup_{r \downarrow 0} \MM^\lambda_r(E) = 0 \big\}, \\
  \ldimm(E) &= \inf\big\{ \lambda \ge 0 : \liminf_{r \downarrow 0} \MM^\lambda_r(E) = 0 \big\}.
\end{align*}
In the case when $\udimm(E) = \ldimm(E)$, the common value is denoted by $\dimm(E)$. If the Assouad dimension is determined by looking at the number of balls needed to cover the set locally everywhere, then the Minkowski dimension tells how many balls are needed in average. Recall that if $E \subset X$ is compact, then $\dimh(E) \le \ldimm(E) \le \udimm(E) \le \udima(E)$.

\begin{lemma}\label{lemma:ala-ass & h-dorff}
  If $X$ is complete and $E \subset X$ is closed, then $\ldima(E) \le \dimh(E \cap B_0)$ 
  for all balls $B_0$ centered at $E$.
\end{lemma}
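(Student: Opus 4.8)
The plan is to bound $\ldima(E)$ from above by producing, for any $t > \dimh(E \cap B_0)$, arbitrarily deep scales at which a ball centered at $E$ can be efficiently covered, thereby violating the lower Assouad packing lower bound. Fix such a $t$; then $\HH^t(E \cap B_0) = 0$, so for every $\eps > 0$ there is a countable cover of $E \cap B_0$ by sets $U_i$ with $\sum_i \diam(U_i)^t < \eps$. The first step is to pass from this Hausdorff cover to a cover by balls: replacing each $U_i$ by a ball $B(x_i, r_i)$ with $x_i \in E \cap U_i$ (nonempty WLOG) and $r_i \simeq \diam(U_i)$, we obtain balls centered at $E$ with $\sum_i r_i^t < C\eps$, covering $E \cap B_0$ (and hence, by a Vitali/maximality argument if one prefers uniform radii, a neighborhood of it). Since $E$ is closed and $X$ complete, a Baire-category or direct compactness-of-$E\cap B_0$ argument is available, but completeness is really needed only to ensure $E \cap B_0$ is complete so that the covering content behaves well; I expect we may as well assume $E \cap B_0$ is closed and bounded and proceed.

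Next, suppose for contradiction that $\ldima(E) > t$. Pick $s$ with $t < s < \ldima(E)$; then there is $c > 0$ so that for every $x \in E$ and all $0 < r < R < \diam(E)$, every cover of $B(x,R) \cap E$ by $r$-balls needs at least $c(R/r)^{s}$ of them — equivalently every maximal $r$-packing of $E \cap B(x,R)$ has at least $c(R/r)^s$ balls. Fix a reference ball $B(x_0, R_0)$ centered at a point $x_0 \in E \cap B_0$ with $R_0$ small. The idea is that the lower bound forces a lot of mass at every scale, while the vanishing $\HH^t$-content forbids it. Concretely, take the ball cover $\{B(x_i, r_i)\}$ of $E \cap B_0$ with $\sum r_i^t < \eps$; only finitely many of these meet $B(x_0, R_0/2)$ and cover it, say $B(x_1,r_1), \dots, B(x_n, r_n)$ with each $r_i < R_0$. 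For each $i$, the lower Assouad bound applied at scale $r_i$ inside $B(x_i, 2r_i)$ is vacuous, so instead one should run the estimate the other way: count a fixed-radius $\delta$-packing of $E \cap B(x_0, R_0)$, which must have $\ge c(R_0/\delta)^s$ balls; on the other hand each such packing ball of radius $\delta \le \min_i r_i$ lies in one of the $B(x_i, 2r_i)$, and a $\delta$-packing inside $B(x_i, 2r_i) \cap E$ has at most $C(r_i/\delta)^{\udima(E)}$ balls — too weak. The clean route is instead to iterate the Hausdorff cover: choose $\eps \to 0$, and at each stage refine, extracting the scale $\delta = \min_i r_i$ and comparing $c(R_0/\delta)^s \le \sum_i C(r_i/\delta)^{s}$, i.e. $cR_0^s \le C\sum_i r_i^s \le C \delta^{s-t}\sum_i r_i^t < C\delta^{s-t}\eps \to 0$, the desired contradiction since $s > t$ and $\delta \le \diam(U_i) \to 0$ along the cover.

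Thus the key inequality chain is: lower Assouad $\Rightarrow$ packing number of $E \cap B(x_0,R_0)$ at scale $\delta$ is $\ge c(R_0/\delta)^s$; each packing ball sits inside some cover ball $B(x_i, 2r_i)$, and a $\delta$-packing of $E \cap B(x_i, 2r_i)$ has $\le C (r_i/\delta)^s$ members (again by the upper Assouad/doubling covering bound with exponent $s$, using $s < \ldima(E) \le \udima(E)$, or just the doubling bound with any fixed admissible exponent $\ge s$ and absorbing); summing over $i$ and using $r_i \le \delta^{?}$ — here one must be careful that $r_i \ge \delta$, so $(r_i/\delta)^s = (r_i/\delta)^t (r_i/\delta)^{s-t}$, and one needs an a priori upper bound $r_i \le R_0$ to control $(r_i/\delta)^{s-t} \le (R_0/\delta)^{s-t}$, giving $c(R_0/\delta)^s \le C(R_0/\delta)^{s-t}\delta^{-t}\sum r_i^t$, i.e. $cR_0^t \le C\sum r_i^t < C\eps$. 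Letting $\eps \downarrow 0$ with $R_0$ fixed yields $cR_0^t \le 0$, a contradiction. The main obstacle, and the step needing the most care, is exactly this bookkeeping with the two scales $\delta$ and $r_i$: ensuring the refinement of the Hausdorff cover can be taken with all $r_i$ bounded below by a common $\delta > 0$ (true since the cover is, after discarding, finite once restricted to a fixed ball $B(x_0,R_0)$ centered at $E$ with $\overline{B(x_0,R_0)} \subset B_0$), and making sure the role of completeness — which guarantees that balls centered at $E$ with small radius are genuinely there to be packed and that $E \cap B_0$ inherits enough structure — is correctly invoked. Everything else is the routine doubling covering estimate and the definition of Hausdorff content.
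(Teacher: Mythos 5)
There is a genuine gap at the decisive step, and it is the step your whole contradiction rests on. You bound the number of balls in a $\delta$-packing of $E\cap B(x_i,2r_i)$ by $C(r_i/\delta)^{s}$ with $t<s<\ldima(E)$. No such bound exists: doubling (finite upper Assouad dimension) yields covering/packing upper bounds only for exponents $u>\udima(E)\ge\ldima(E)>s$, and in fact you notice this yourself when you write that the honest bound $C(r_i/\delta)^{\udima(E)}$ is ``too weak'' --- but then both your ``clean route'' and your final paragraph quietly reinstate the exponent $s$ without justification. If you run the same bookkeeping with an admissible exponent $u>\udima(E)$, you get $c(R_0/\delta)^{s}\le C(R_0/\delta)^{u-t}\delta^{-t}\sum_i r_i^{t}$, i.e.\ $cR_0^{s-u+t}\,\delta^{u-s}\le C\eps$, and since $u>s$ and $\delta\le\min_i r_i$ must shrink together with $\eps$, both sides tend to $0$ and no contradiction arises. (In the ``clean route'' the intermediate inequality $\sum_i r_i^{s}\le\delta^{s-t}\sum_i r_i^{t}$ is also reversed, because $r_i\ge\delta$ and $s>t$.) This is not a fixable bookkeeping issue: a packing count at a single scale $\delta$ cannot be played off directly against a Hausdorff cover whose sets have wildly different diameters --- this is precisely why the lower Minkowski dimension does not bound the Hausdorff dimension from below in general.

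What makes the lemma true is the extra strength of $\ldima$: the packing lower bound holds uniformly for \emph{all} balls centered at $E$ and \emph{all} pairs of scales, and this must be iterated. That is what the paper does: the uniform estimate $\MM^{t_0}_r(E\cap B)\ge c_0R^{t_0}$ is applied repeatedly, scale after scale and ball after ball, to build a Cantor-type subset $C\sub E\cap B_0$ (equivalently, a Frostman-type measure $\mu$ on $E\cap B_0$ with $\mu(B(x,r))\le Cr^{t}$), following \cite[Lemma 4.1]{Lehrback2009}; completeness is what guarantees that the limit set of this nested construction is nonempty and contained in the closed set $E$ --- not, as you suggest, merely that ``the covering content behaves well''. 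Once such a measure exists, an arbitrary countable cover $\{U_i\}$ of $E\cap B_0$ satisfies $\sum_i\diam(U_i)^{t}\ge c\sum_i\mu(U_i)\ge c\,\mu(E\cap B_0)>0$, which is exactly the multi-scale estimate your single-scale comparison is missing, and it yields $\HH^{t}_R(E\cap B(x,R))\ge cR^{t}$ and hence $\dimh(E\cap B_0)\ge t$ for every $t<\ldima(E)$.
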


\begin{proof}
  Notice first that if $0<t_0<\ldima(E)$, then for each ball $B \subset X$ centered at $E$ with radius $0<R<\diam(E)$ we have, by the definition of the lower Assouad dimension, that $\MM^{t_0}_r(E \cap B) \geq c_0 R^{t_0}$ for all $0<r<R$, where $c_0>0$ does not depend on $r$ nor the choice of the ball $B$. From such a uniform estimate we obtain, by \cite[Lemma 4.1]{Lehrback2009}, that for each $0<t<t_0$ there is a constant $c>0$ such that
  \begin{equation}\label{eq:unif.fat}
    \HH^{t}_R\big(E \cap B(x,R)\big)\geq c R^t\quad\text{ whenever $x\in E$ and $0<R<\diam(E)$}.
  \end{equation}
  The proof in \cite{Lehrback2009} is written for closed subsets of a Euclidean space, but it applies verbatim also in complete doubling metric spaces; the idea is to use the uniform Minkowski content estimate repeatedly to construct a Cantor-type subset $C\subset E\cap B_0$ for which estimate \eqref{eq:unif.fat} holds.
Therefore $\dimh(E\cap B_0)\geq\dimh(C)\geq t$, and the claim follows.
\end{proof}

\begin{remark}\label{rmk:thick}
  The proof of Lemma \ref{lemma:ala-ass & h-dorff} shows that condition \eqref{eq:unif.fat} holds for each $0\leq t<\ldima(E)$. Conversely, if \eqref{eq:unif.fat} holds, then it is clear that also
$\MM^{t}_r(E \cap B(x,R))\geq c R^t$ for all $0<r<R$, and thus $\ldima(E)\geq t$. Hence,
 in a complete space,
\[
  \ldima(E) = \sup\{ t\geq 0 : \eqref{eq:unif.fat} \text{ holds} \}.
\]
\end{remark}

\begin{remark}\label{rmk:unif perf}
The notion of uniform perfectness of subsets of the complex
plane was perhaps first studied in \cite{Pommerenke1979}. About the same time, the
equivalent notion of homogeneously dense set, in the setting of general
metric spaces, was introduced in \cite{TukiaVaisala1980}. In \cite{Vuorinen1987}, a metric thickness
condition was defined in terms of capacity densities and in \cite[Theorem 4.1]{JarviVuorinen1996} it was
shown that for a closed set $E\sub \R^d$ this thickness condition is equivalent to uniform perfectness
and also to the existence of $t>0$ such that \eqref{eq:unif.fat} holds; compare this to Lemma~\ref{lemma:unif.perf} and Remark~\ref{rmk:thick}.
\end{remark}

Let $\Omega \subset X$ be an open set such that $\Omega \ne X$. By a Whitney cover we mean a collection of closed balls having radii comparable to the distance to the closed set $X \setminus \Omega$ and with controlled overlap. More precisely, a \emph{Whitney cover} $\WW(\Omega)$ is a countable collection $\{ B_i \}_{i\in I}$ of balls $B_i = B(x_i,r_i)$ with $x_i \in \Omega$ and $r_i = \tfrac18\dist(x_i,X \setminus \Omega)$ such that $\Omega=\bigcup_{i\in I} B_i$ and
$\sum_{i\in I} \Char{B_i}\leq C$ for some constant $C \ge 1$. Here $\Char{E}$ is the characteristic function of $E\sub X$.
Such a collection can be constructed from maximal packings of the sets $(X \setminus \Omega)_{2^{k}} \setminus (X \setminus \Omega)_{2^{k-1}}$, $k\in\Z$;
see e.g.\ \cite[Proposition 4.1.15]{HeinonenKoskelaShanmugalingamTyson2011} for details.
Here $E_r = \{ x \in X : \dist(x,E) < r \}$ is the open \emph{$r$-neighborhood} of $E$. It is obvious that for a given open set $\Omega\sub X$ there usually exist many Whitney covers. Note also that the choice of the constant $\frac 1 8$ is not important. If we used radii $r_i = \delta\dist(x_i,X \setminus \Omega)$ for $0<\delta\leq \frac 12$ instead, then our
results would remain valid up to constants depending on $\delta$ and, in some particular cases, other modifications which
will be commented later; see, for instance~\ref{rmk:small c}.

For $k \in \Z$ and $A \subset X$ we set
\[
  \WW_k(\Omega;A) = \{ B(x_i,r_i) \in \WW(\Omega) : 2^{-k-1} < r_i \le 2^{-k} \text{ and } A \cap B(x_i,r_i) \ne \emptyset \}
\]
and $\WW_k(\Omega) = \WW_k(\Omega;X)$.
Observe that if $B_1, B_2 \in \WW(\Omega)$ and $B_1\cap B_2\neq \emptyset$, then
there is $k\in \Z$ such that $B_1, B_2 \in \WW_k(\Omega)\cup\WW_{k+1}(\Omega)$.

Recall that the \emph{Whitney decomposition} of an open set $\Omega \subset \R^d$ is $\Omega = \bigcup_{Q \in \WW} Q$, where the cubes $Q \in \WW$ have pairwise disjoint interiors, edges parallel to coordinate axes, and their diameters are of the form $\diam(Q) = 2^{-k}$, $k\in\Z$. Moreover, the diameters satisfy the condition
\[
  \diam(Q) \le \dist(Q,\partial\Omega) \le 4\diam(Q).
\]
See \cite{Stein1970} for the existence and properties of Whitney decompositions. It is clear that a Whitney decomposition introduces a Whitney cover having exactly the same cardinality (but with constant $\delta = \tfrac12$).

\section{Dimensions in metric measure spaces}\label{sect:metric measure}

By a \emph{measure}, we exclusively refer to a nontrivial Borel regular outer measure for which bounded sets have finite measure.
We say that a measure $\mu$ on $X$ is \emph{doubling} if there is a constant $C \ge 1$ so that
\begin{equation} \label{eq:tuplamitta}
  0 < \mu(2B) \le C\mu(B)
\end{equation}
for all closed balls $B$ of $X$.

The existence of a doubling measure yields an upper bound for the upper Assouad dimension of $X$. Indeed, let $C \ge 1$ be as in \eqref{eq:tuplamitta}, $s = \log_2 C$, and apply \eqref{eq:tuplamitta} inductively to find a constant $c>0$ for which
\begin{equation} \label{eq:doubling dimension}
  \frac{\mu(B(y,r))}{\mu(B(x,R))}\ge c\Bigl(\frac rR\Bigr)^s
\end{equation}
for all $y\in B(x,R)$ and $0<r<R<\diam(X)$.
Again, the estimate \eqref{eq:doubling dimension} may also hold for smaller values of $s$. The infimum of such admissible exponents $s$ is called the \emph{upper regularity dimension} of $\mu$, denoted by $\udimreg(\mu)$. A simple volume argument implies that $\udima(X) \le \udimreg(\mu)$ whenever $\mu$ is a doubling measure on $X$. On the other hand, Vol'berg and Konyagin \cite[Theorem 4]{VolbergKonyagin1987} have given an example of a space $X$ where this inequality is strict for all doubling measures $\mu$. In particular, if a metric space carries a doubling measure, then the space is doubling. The reverse implication is true if $X$ is assumed to be complete; see \cite{VolbergKonyagin1984, VolbergKonyagin1987, LuukkainenSaksman1998, Wu1998, BylundGudayol2000, KaenmakiRajalaSuomala2012a}.

We have the following converse to \eqref{eq:doubling dimension}.

\begin{lemma} \label{thm:converse dbl}
  If $X$ is uniformly perfect and $\mu$ is doubling, then there exist $t>0$ and $C \ge 1$ such that
  \begin{equation}\label{eq:converse dbl}
    \frac{\mu(B(y,r))}{\mu(B(x,R))}\leq C\Bigl(\frac rR\Bigr)^t
  \end{equation}
  whenever $0<r<R<\diam(X)$ and $y\in B(x,R)$.
\end{lemma}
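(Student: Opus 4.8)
The natural route is to establish a one-step \emph{reverse doubling} estimate for $\mu$ and then iterate it; uniform perfectness enters only in this first step. Fix a uniform perfectness constant $A\ge 1$ for $X$ and a doubling constant $C_0>1$ for $\mu$ (the latter may always be taken strictly larger than $1$), and put $\tau=2A+1$. I claim that
\[
  \mu\bigl(B(z,\tau\roo)\bigr)\ \ge\ \lambda\,\mu\bigl(B(z,\roo)\bigr)
\]
for some $\lambda=\lambda(A,C_0)>1$, whenever $z\in X$ and $\roo>0$ satisfy $X\setminus B(z,\tau\roo)\ne\emptyset$. To see this, note that $B(z,2A\roo)\subseteq B(z,\tau\roo)$, so $X\setminus B(z,2A\roo)\supseteq X\setminus B(z,\tau\roo)\ne\emptyset$, and uniform perfectness applied at $z$ at scale $2A\roo$ produces a point $w$ with $2\roo<\dist(z,w)\le 2A\roo$. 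By the triangle inequality $B(z,\roo)$ and $B(w,\roo)$ are disjoint and both contained in $B(z,\tau\roo)$, whence $\mu(B(z,\tau\roo))\ge\mu(B(z,\roo))+\mu(B(w,\roo))$; also $B(z,\tau\roo)\subseteq B(w,(4A+1)\roo)$, so applying the doubling inequality $k=\lceil\log_2(4A+1)\rceil$ times gives $\mu(B(w,\roo))\ge C_0^{-k}\mu(B(z,\tau\roo))$. Since every ball has finite positive measure, combining the two inequalities and rearranging yields the claim with $\lambda=(1-C_0^{-k})^{-1}$.

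The rest is bookkeeping. As $\sup_{u}\dist(z,u)\ge\diam(X)/2$ for every $z$, the hypothesis $X\setminus B(z,\tau\roo)\ne\emptyset$ holds whenever $\tau\roo<\diam(X)/2$ (and automatically if $X$ is unbounded). Given $z\in X$ and $0<r<\roo'<\diam(X)/2$, pick the largest $n\ge0$ with $\tau^{\,n}r\le\roo'$ and apply the one-step estimate successively at the radii $r,\tau r,\dots,\tau^{\,n-1}r$, each of which is admissible because multiplying by $\tau$ keeps one below $\tau^{\,n}r\le\roo'<\diam(X)/2$. This gives $\mu(B(z,\roo'))\ge\mu(B(z,\tau^{\,n}r))\ge\lambda^{\,n}\mu(B(z,r))$, and since $\tau^{\,n+1}r>\roo'$ one obtains
\[
  \mu\bigl(B(z,r)\bigr)\ \le\ C_1\Bigl(\tfrac{r}{\roo'}\Bigr)^{t}\mu\bigl(B(z,\roo')\bigr),\qquad t=\log_\tau\lambda>0,
\]
with $C_1$ depending only on $A$ and $C_0$. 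The leftover range $\diam(X)/2\le\roo'<\diam(X)$, nonempty only when $X$ is bounded, is handled by comparing $B(z,\roo')$ with $B(z,\diam(X)/3)$ via the case just proved together with the doubling inequality; this changes only the constant.

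To conclude, let $y\in B(x,R)$ and $0<r<R<\diam(X)$. Then $B(y,R)\subseteq B(x,2R)$, so doubling gives $\mu(B(y,R))\le C_0\,\mu(B(x,R))$, and therefore
\[
  \mu\bigl(B(y,r)\bigr)\ \le\ C_1\Bigl(\tfrac{r}{R}\Bigr)^{t}\mu\bigl(B(y,R)\bigr)\ \le\ C_1C_0\Bigl(\tfrac{r}{R}\Bigr)^{t}\mu\bigl(B(x,R)\bigr),
\]
which is \eqref{eq:converse dbl} with $C=C_1C_0$.

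The only genuinely substantial point is the one-step reverse doubling estimate, where uniform perfectness (providing the separated ball $B(w,\roo)$) and the doubling of $\mu$ (forcing $B(w,\roo)$ to carry a fixed proportion of the mass of $B(z,\tau\roo)$) are played against each other to produce the gain $\lambda>1$; everything afterwards is iteration and triangle inequalities. The only mild nuisance is that uniform perfectness at a fixed center constrains only scales below a fixed multiple of $\diam(X)$, so the iteration must stop short of $\diam(X)$ and the largest scales require the separate crude estimate above.
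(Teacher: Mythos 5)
Your proof is correct and follows essentially the same route as the paper: a reverse-doubling decay estimate obtained from uniform perfectness plus doubling, iterated over scales, and then the final transfer from center $y$ to center $x$ via one application of doubling. The only difference is that the paper quotes this decay estimate from Gromov (Proposition B.4.6) as a black box, whereas you prove the one-step reverse doubling inequality directly, making the argument self-contained.
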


\begin{proof}
  It follows from \cite[Proposition B.4.6]{Gromov1999} that there exists $0<\delta<1$ so that $\mu(B(y,\delta^k R)) \le (1-\delta)^k \mu(B(y,R))$ for all $y \in X$, $0<R<\diam(X)$, and $k \in \N$. If $0<r<R$ and $k \in \N$ is such that $\delta^{k+1}R < r \le \delta^k R$, then
  \begin{equation*}
    \frac{\mu(B(y,r))}{\mu(B(y,R))} \le (1-\delta)^k \le (1-\delta)^{-1} \Bigl( \frac{r}{R} \Bigr)^{\log(1-\delta)/\log\delta}.
  \end{equation*}
  The proof is finished since $\mu(B(y,R)) \le \mu(B(x,2R)) \le C\mu(B(x,R))$.
\end{proof}

The supremum of all admissible exponents $t$ in \eqref{eq:converse dbl} is called the \emph{lower regularity dimension} of $\mu$, denoted by $\ldimreg(\mu)$. In particular, Lemma \ref{thm:converse dbl} says that $\ldimreg(\mu)>0$ for a doubling measure $\mu$ in a uniformly perfect space. It follows directly from the estimate \eqref{eq:converse dbl} that $\ldimreg(\mu) \le \ldima(X)$. If $X$ is not uniformly perfect, then, by recalling Lemma \ref{lemma:unif.perf}, it is natural to define $\ldimreg(\mu)=0$. In the case when $\udimreg(\mu) = \ldimreg(\mu)$, the common value is denoted by $\dimreg(\mu)$.

The measure $\mu$ is \emph{$s$-regular} (for $s > 0$) if there is a constant $C \ge 1$ such that
\[
  C^{-1}r^s \le \mu(B(x,r)) \le Cr^s
\]
for all $x \in X$ and every $0 < r < \diam(X)$. It is immediate that if $X$ is bounded, then a measure $\mu$ is $s$-regular if and only if $\dimreg(\mu) = s$
(and `only if' part holds for any $X$). Thus, if $X$ is \emph{$s$-regular}, i.e.\ it carries an $s$-regular measure $\mu$, then $X$ is uniformly $s$-homogeneous. A subset $E \subset X$ is \emph{$s$-regular} if it is an $s$-regular space in the relative metric.

\begin{remark}
If $X$ is $s$-regular and a closed subset $E\subset X$ is such that \eqref{eq:unif.fat} holds for all $0<R<\diam(X)$ with $t<s-1$
(in particular, if $t<\min\{\ldima(E),s-1\}$), then $E$ satisfies a uniform capacity density condition:
$E$ is \emph{uniformly $p$-fat} for all $p>s-t$, that is,
\[
  \capp{p}(E\cap B,2B)\geq C \capp{p}(B,2B)
\]
for all balls $B\sub X$ of radius $0<r<\diam(X)$. Here $\capp{p}(K,\Omega)$ is the
variational $p$-capacity of a compact set $K$ with respect to
the open set $\Omega\supset K$. See e.g.~\cite{Costea2009} for the
definitions; the above result can be deduced from~\cite[Remark~4.5]{Costea2009}.
Recall that in an $s$-regular space, $\capp{p}(B,2B)$ is comparable to $r^{s-p}$ for all balls $B$ of radius $r>0$.

Conversely, if
a closed set $E\subset X$ is uniformly $p$-fat for some $1<p\leq s$,
then estimate \eqref{eq:unif.fat}
holds with $t=s-p$ for all $0<R<\diam(X)$. This follows e.g.\ from~\cite[Theorem 4.9]{Costea2009}.
But then, by Remark~\ref{rmk:thick},
$\ldima(E)\geq t=s-p$.
Combining the above observations, we conclude that in an $s$-regular complete space $X$
the lower Assouad dimension can also be characterized as
\[
\ldima(E)=s-\inf\{1<p\leq s : E \text{ is uniformly $p$-fat}\}
\]
for a subset $E\sub X$ with $\ldima(E) < s-1$; in the case when $X$ is unbounded we need to further assume that $E$ is unbounded as well. The relation between uniform $s$-fatness and uniform perfectness in an $s$-regular metric space (recall Remark \ref{rmk:thick}) has also been considered in \cite{KorteShanmugalingam2010}.
\end{remark}

In doubling metric spaces, it is sometimes more convenient to use modified version of the Minkowski content, namely the \emph{Minkowski $r$-content of codimension $q$}. Given a doubling measure $\mu$, this is defined by setting
\[
  \MM^{\mu,q}_r(E) = \inf\bigl\{ r^{-q} \sum_{k} \mu(B(x_k,r)) : E \subset \bigcup_{k} B(x_k,r),\; x_k \in E \bigr\}
\]
for all compact sets $E \subset X$. Note that $\MM^{\mu,q}_r(E)$ is comparable to $r^{-q} \mu(E_r)$.
\emph{The lower and upper Minkowski codimensions of $E$} are defined, respectively, as
\begin{align*}
  \lcodimm(E) &= \sup\{ q \ge 0 : \limsup_{r \downarrow 0} \MM_r^{\mu,q}(E) = 0 \}, \\
  \ucodimm(E) &= \sup\{ q \ge 0 : \liminf_{r \downarrow 0} \MM_r^{\mu,q}(E) = 0 \}.
\end{align*}
If $s > \udimreg(\mu)$ and $t < \ldimreg(\mu)$, then there is a constant $C \ge 1$ so that $C^{-1}r^{\lambda-t}\mu(E_r) \le \MM^\lambda_r(E) \le Cr^{\lambda-s}\mu(E_r)$ for all $0<r<\diam(E)$ (cf.\ \cite[Remark 2.3]{Lehrback2012} and the proof of \cite[Lemma~3.3]{JarvenpaaJarvenpaaKaenmakiRajalaRogovinSuomala2007}). Thus
\begin{equation}\label{eq:dimm ja codimm}
\begin{split}
  \ldimreg(\mu) &\le \lcodimm(E) + \udimm(E) \le \udimreg(\mu), \\
  \ldimreg(\mu) &\le \ucodimm(E) + \ldimm(E) \le \udimreg(\mu)
\end{split}
\end{equation}
for all compact sets $E \subset X$ and doubling measures $\mu$.

We also consider the following localizations of the Minkowski codimensions,
in the spirit of Bouligand \cite{Bouligand1928}.
If $\mu$ is a doubling measure and $E \subset X$, then we say that the \emph{lower Assouad codimension}, denoted by $\lcodima(E)$, is the supremum of
those $t\ge 0$ for which there exists a constant $C \ge 1$ such that
\begin{equation}\label{eq:bouli}
\frac{\mu(E_r\cap B(x,R))}{\mu(B(x,R))}\le C\Bigl(\frac r R\Bigr)^t
\end{equation}
for every $x\in E$ and all $0<r<R<\diam(E)$.
The \emph{upper Assouad codimension} of $E\sub X$,
denoted by $\ucodima(E)$,
is naturally defined as the infimum of all $s \geq 0$ for which
there is $c>0$ such that
\begin{equation}\label{eq:bouliconv}
\frac{\mu(E_r \cap B(x,R))}{\mu(B(x,R))} \geq c\Bigl(\frac r R\Bigr)^s
\end{equation}
for every $x\in E$ and all $0<r<R<\diam(E)$.

\begin{remark}
It follows from \cite[Theorem 5.1]{LehrbackTuominen2012}, that the
lower Assouad codimension can be characterized as
the supremum of
those $q\geq 0$ for which there exists a constant $C \ge 1$ such that
\begin{equation}\label{eq:aikawa}
  \frac{1}{\mu(B(x,r))} \int_{B(x,r)} \dist(y,E)^{-q}\,d\mu(y) \le C r^{-q}
\end{equation}
for every $x\in E$ and all $0<r<\diam(E)$.
We interpret the integral to be $+\infty$ if $q>0$ and $E$ has positive measure.

A concept of dimension defined via integrals as in \eqref{eq:aikawa}
was first used by Aikawa in \cite{Aikawa1991} for subsets of $\R^d$
(see also \cite{AikawaEssen1996}).
Thus, in \cite{LehrbackTuominen2012}, where the interest originates from such
integral estimates, the lower Assouad codimension is called
the \emph{Aikawa codimension}. However, the
resemblance between the conditions in \eqref{eq:bouli} and \eqref{eq:bouliconv}
and the definitions of upper and lower Assouad dimensions justifies
the present terminology; compare also \eqref{eq:dimm ja codimm} to Lemma \ref{lemma:dima ja codima} below.
\end{remark}

\begin{lemma}\label{lemma:dima ja codima}
If $\mu$ is a doubling measure on $X$ and $E\sub X$, then
\begin{equation}\begin{split}\label{eq:dima ja codima}
  \ldimreg(\mu) &\le \lcodima(E) + \udima(E) \le \udimreg(\mu), \\
  \ldimreg(\mu) &\le \ucodima(E) + \ldima(E) \le \udimreg(\mu).
\end{split}
\end{equation}
\end{lemma}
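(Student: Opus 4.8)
The plan is to establish the four inequalities in \eqref{eq:dima ja codima} by relating the defining covering/packing estimates of the Assouad dimensions to the measure-ratio estimates in \eqref{eq:bouli} and \eqref{eq:bouliconv}, using the regularity-dimension bounds \eqref{eq:doubling dimension} and \eqref{eq:converse dbl} to convert between ball counts and measures. Fix a ball $B(x,R)$ centered at $E$ with $0<R<\diam(E)$ and $0<r<R$. The key observation is that $\mu(E_r\cap B(x,R))$ is, up to the doubling constant, comparable to $\sum_i\mu(B(x_i,r))$, where $\{B(x_i,r)\}$ is any maximal $r$-packing of $E\cap B(x,R)$ (so that $\{2B_i\}$ covers $E\cap B(x,R)$, hence $E_r\cap B(x,R)\subset \bigcup_i B(x_i,3r)$, while on the other hand the disjoint balls $B(x_i,r)$ all lie inside $E_{2r}\cap B(x,2R)$). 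Denoting by $n(x,R,r)$ the cardinality of such a maximal packing — equivalently, up to constants, the least number of $r$-balls needed to cover $E\cap B(x,R)$ — we thus have
\[
  \mu(E_r\cap B(x,R)) \approx n(x,R,r)\,\inf_i \mu(B(x_i,r)) \quad\text{(roughly),}
\]
so the problem reduces to comparing $n(x,R,r)$ against the quantities $(r/R)^{-t}$ appearing in the Assouad dimension definitions, while simultaneously controlling the individual ball measures $\mu(B(x_i,r))$ and $\mu(B(x,R))$ via the regularity dimensions.

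First I would prove the two middle inequalities $\lcodima(E)+\udima(E)\ge\ldimreg(\mu)$ and $\ucodima(E)+\ldima(E)\le\udimreg(\mu)$ (the ``easy'' direction using \eqref{eq:doubling dimension}). For the first: fix $u>\udima(E)$; then $B(x,R)$, hence $E_r\cap B(x,R)$, is covered by $\lesssim (r/R)^{-u}$ balls of radius $r$, each centered within $B(x,2R)$, so by \eqref{eq:doubling dimension} with exponent $s>\udimreg(\mu)$ each has measure $\lesssim (r/R)^{s}\mu(B(x,R))$; summing gives $\mu(E_r\cap B(x,R))\lesssim (r/R)^{s-u}\mu(B(x,R))$, so $\lcodima(E)\ge s-u$. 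Letting $s\downarrow\udimreg(\mu)$ — wait, we need $s>\udimreg(\mu)$, so in fact this yields $\lcodima(E)\ge s-u$ for every $s>\udimreg(\mu)$ and $u>\udima(E)$, hence $\lcodima(E)+\udima(E)\ge\udimreg(\mu)\ge\ldimreg(\mu)$, which is more than enough. For $\ucodima(E)+\ldima(E)\le\udimreg(\mu)$: take a maximal $r$-packing $\{B(x_i,r)\}$ of $E\cap B(x,R)$; its cardinality is $\gtrsim (r/R)^{-t}$ for every $t<\ldima(E)$, the balls $B(x_i,r)$ are disjoint and contained in $B(x,2R)$, and $\mu(B(x_i,r))\gtrsim (r/R)^{s}\mu(B(x,R))$ by \eqref{eq:doubling dimension} with $s>\udimreg(\mu)$; summing over the disjoint family gives $\mu(E_r\cap B(x,R))\gtrsim \mu(\bigcup_i B(x_i,r))\gtrsim (r/R)^{s-t}\mu(B(x,R))$, so $\ucodima(E)\le s-t$, and letting $t\uparrow\ldima(E)$, $s\downarrow\udimreg(\mu)$ gives the claim.

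The remaining two inequalities, $\lcodima(E)+\udima(E)\le\udimreg(\mu)$ and $\ucodima(E)+\ldima(E)\ge\ldimreg(\mu)$, go the same way but now invoking \eqref{eq:converse dbl}, which is available precisely because $\ldimreg(\mu)>0$ forces $X$ uniformly perfect (Lemma \ref{thm:converse dbl}); if $\ldimreg(\mu)=0$ the lower bounds are vacuous. For instance, for the last inequality, with $\{B(x_i,r)\}$ a maximal $r$-packing of $E\cap B(x,R)$: assuming $s<\ucodima(E)$ we have the lower bound \eqref{eq:bouliconv}, while a covering of $E\cap B(x,R)$ by $\lesssim (r/R)^{-u}$ balls of radius $r$ (valid for $u>\udima(E)$) combined with the upper estimate $\mu(B(x_i,r))\lesssim (r/R)^{t}\mu(B(x,R))$ from \eqref{eq:converse dbl} with $t<\ldimreg(\mu)$, applied to the covering balls $B(x_i,3r)\subset E_{2r}\cap B(x,2R)$... hmm, I should be careful: the clean statement is that for $t<\ldimreg(\mu)$ and $u>\udima(E)$ one gets $\mu(E_r\cap B(x,R))\lesssim (r/R)^{t-u}\mu(B(x,R))$, forcing $\ucodima(E)\le u-t$, i.e. $\ucodima(E)+t\le u$; taking suprema/infima over $t,u$ yields $\ucodima(E)+\ldima(E)\ge\ldimreg(\mu)$ only if the inequality goes the other way — so instead, for this direction I use a maximal $r$-packing and the \emph{upper} measure bound \eqref{eq:converse dbl} to conclude $\mu(E_r\cap B(x,R))\lesssim n(x,R,r)(r/R)^{t}\mu(B(x,R))$ is false in general; rather the correct pairing is: the \emph{lower} codimension bound against \emph{covering} (upper dimension, upper measure bound) and the \emph{upper} codimension bound against \emph{packing} (lower dimension, lower measure bound). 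Writing this out carefully for each of the four cases, matching ``$\MM^{\mu,q}$ small'' or ``$\MM^{\mu,q}$ bounded below'' with the appropriate one-sided regularity estimate, is the routine but bookkeeping-heavy part. The main obstacle is thus purely organizational: keeping straight which of \eqref{eq:doubling dimension}/\eqref{eq:converse dbl} and which of covering/packing pairs with which of the four codimension/dimension sums, and handling the boundary behavior of the suprema and infima (and the degenerate case $\ldimreg(\mu)=0$) so that the inequalities are clean rather than strict. Once the dictionary $\mu(E_r\cap B(x,R))\approx n(x,R,r)\cdot(\text{ball measure})$ is set up, each inequality is two or three lines.
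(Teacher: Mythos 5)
Your overall strategy---comparing $\mu(E_r\cap B(x,R))$ with a covering/packing number times individual ball measures, and converting ball measures via the regularity dimensions---is exactly the paper's approach, but the execution mispairs the tools, and this is not mere bookkeeping. The estimate \eqref{eq:doubling dimension}, valid for $s>\udimreg(\mu)$, is a \emph{lower} bound $\mu(B(y,r))\ge c(r/R)^s\mu(B(x,R))$; it cannot be used, as in your first computation, to bound the measures of covering balls from \emph{above}. The upper bound needed there is \eqref{eq:converse dbl} with $t<\ldimreg(\mu)$, and this is precisely why the left-hand sides of \eqref{eq:dima ja codima} feature $\ldimreg(\mu)$ rather than $\udimreg(\mu)$. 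Indeed, your derived conclusion $\lcodima(E)+\udima(E)\ge\udimreg(\mu)$ (``more than enough'') is false in general: combined with the right-hand inequality it would force $\lcodima(E)+\udima(E)=\udimreg(\mu)$ for every doubling $\mu$ and every $E$, whereas the paper notes right after the lemma that each inequality in \eqref{eq:dima ja codima} can be strict.

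The same confusion persists in your final paragraph and is never resolved. For \eqref{eq:bouliconv} the admissible exponents are $s>\ucodima(E)$ (it is an infimum), not $s<\ucodima(E)$; and an upper bound $\mu(E_r\cap B)\lesssim (r/R)^{t-u}\mu(B)$ yields $\lcodima(E)\ge t-u$, not $\ucodima(E)\le u-t$. Moreover, the ``correct pairing'' you settle on (lower codimension with covering, upper codimension with packing) accounts for only two of the four inequalities. In the paper, \emph{both} left-hand inequalities come from one covering estimate: for $t<\ldimreg(\mu)$ and an $r$-cover $\{B_i\}_{i=1}^n$ of $E\cap B(x,2R)$, $\mu(E_r\cap B(x,R))\le Cn(r/R)^{t}\mu(B(x,R))$; combining this with $n\le c(r/R)^{-s}$ for $s>\udima(E)$ gives $\lcodima(E)\ge t-s$, while combining it with \eqref{eq:bouliconv} for $s>\ucodima(E)$ gives $n\ge c(r/R)^{s-t}$ and hence $\ldima(E)\ge t-s$. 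Dually, \emph{both} right-hand inequalities come from the packing estimate $\mu(E_r\cap B(x,R))\ge cn(r/R)^{s}\mu(B(x,R))$ for $s>\udimreg(\mu)$: paired with \eqref{eq:bouli} for exponents below $\lcodima(E)$ it bounds the packing number and gives $\udima(E)\le s-t$, and paired with $n\ge c(r/R)^{-t}$ for $t<\ldima(E)$ it gives $\ucodima(E)\le s-t$. Of these four, your sketch correctly carries out only the last; in particular $\ldimreg(\mu)\le\ucodima(E)+\ldima(E)$ is left unproved, and $\lcodima(E)+\udima(E)\le\udimreg(\mu)$ is attributed to the wrong estimate (\eqref{eq:converse dbl} instead of \eqref{eq:doubling dimension}).
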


\begin{proof}
Fix $0<r<R<\diam(E)$.
To prove the left-hand side inequalities of \eqref{eq:dima ja codima}, let $t<\ldimreg(\mu)$
and let $\{B_i\}_{i=1}^n$ be a cover of
$E\cap B(x,2R)$ by balls of radius $r$.
Then the balls $2B_i$ cover $E_r\cap B(x,R)$, and thus
\begin{equation}\label{eq:myy ja N ylos}
\mu(E_r\cap B(x,R))\leq C n \Big(\frac r R\Big)^{t}\mu(B(x,R)).
\end{equation}
Now, if $s > \udima(E)$ we may assume that $n \leq c(r/R)^{-s}$.
It follows that $\lcodima(E)\geq t-s$, and this proves the first left-hand side inequality.
On the other hand, if $s>\ucodima(E)$, then
$\mu(E_r\cap B(x,R))\geq c (r/R)^s \mu(B(x,R))$,
whence \eqref{eq:myy ja N ylos} shows that $n \geq c(r/R)^{s-t}$.
This yields the second left-hand side inequality.

Conversely, if $s>\udimreg(\mu)$
and $\{B_i\}_{i=1}^n$ is an $r$-packing of $E\cap B(x,R/2)$, then
\begin{equation}\label{eq:myy ja N alas}
\mu(E_r\cap B(x,R))\geq c n \Big(\frac r R\Big)^{s}\mu(B(x,R)).
\end{equation}
If $t>\lcodima(E)$, then
we obtain from \eqref{eq:myy ja N alas} that
$n \leq c(r/R)^{t-s}$ and
thus $\udima(E)\leq s-t$ giving the first right-hand side inequality.
Finally, if $t<\ldima(E)$,
then we may assume that $n \geq c(r/R)^{-t}$. Now \eqref{eq:myy ja N alas} implies that
$\ucodima(E)\leq s - t$ yielding the second
right-hand side inequality.
\end{proof}

In particular, if $\mu$ is $s$-regular, then Lemma \ref{lemma:dima ja codima} implies
\begin{equation}\label{eq:dimai ja codima}
\begin{split}
\udima(E) &  = s - \lcodima(E), \\
\ldima(E) &  = s - \ucodima(E)
\end{split}
\end{equation}
for all $E\sub X$. The first equation in \eqref{eq:dimai ja codima}
was also proven in \cite{LehrbackTuominen2012}.
On the other hand, it is not hard to give examples where $\mu$ is doubling
and any given inequality in \eqref{eq:dima ja codima} is strict for a set $E\sub X$;
compare to \cite[Example 4.3]{LehrbackTuominen2012}.

We say that a set $E \subset X$ is \emph{$\roo$-porous} (for $0 \le \roo \le 1$), if for every $x \in E$ and all $0<r<\diam(E)$ there exists a point $y\in X$ such that $B(y,\roo r) \subset B(x,r) \setminus E$. We remark that a more precise name for this porosity condition would be uniform lower porosity. In an $s$-regular complete metric space~$X$ porosity and regularity of sets are closely related concepts: $E \subset X$ is porous if and only if there are $0<t<s$ and a $t$-regular set $F \subset X$ so that $E \subset F$; see \cite[Theorem~5.3]{JarvenpaaJarvenpaaKaenmakiRajalaRogovinSuomala2007} and \cite[Theorem~1.1]{Kaenmaki2007}. In \cite[Theorem~4.7]{JarvenpaaJarvenpaaKaenmakiRajalaRogovinSuomala2007}, it was shown that $\udimm(E) \le s - c\roo^s$ for all $\roo$-porous sets $E$ in an $s$-regular space.
We can now improve this result by using Lemma~\ref{lemma:dima ja codima}:

\begin{proposition}
  If $X$ is $s$-regular, then there is a constant $c>0$ such that $\udima(E) \le s - c\roo^s$ for all $\roo$-porous sets $E \subset X$.
\end{proposition}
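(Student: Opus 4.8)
The strategy is to estimate the lower Assouad codimension $\lcodima(E)$ from below for a $\roo$-porous set $E$, and then invoke the first equation in \eqref{eq:dimai ja codima}, namely $\udima(E) = s - \lcodima(E)$, which holds because $\mu$ is $s$-regular. Concretely, I would fix $x \in E$ and scales $0 < r < R < \diam(E)$, and seek a lower bound of the form $\mu(E_r \cap B(x,R)) \le C (r/R)^t \mu(B(x,R))$ with $t = c\roo^s$; this would give $\lcodima(E) \ge c\roo^s$ and hence the claim. By $s$-regularity, $\mu(E_r \cap B(x,R)) \le C r^{-q}$-type estimates translate cleanly into Lebesgue-content/covering-number estimates, so it suffices to bound the number of $r$-balls needed to cover $E \cap B(x,R)$.

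\textbf{Key steps.} First I would iterate the porosity condition: starting from $B(x,R)$, the porosity hypothesis produces a "hole" $B(y,\roo R) \subset B(x,R) \setminus E$, so that $E \cap B(x,R)$ is contained in $B(x,R)$ minus a ball whose $\mu$-measure is, by $s$-regularity, at least a fixed fraction $\kappa \sim \roo^s$ of $\mu(B(x,R))$. Repeating this inside each child ball at scale $R/2$ (or at a fixed comparable scale chosen so the holes fit), one sees that passing from scale $R$ to scale $R/2$ the relevant measure shrinks by a factor at most $(1 - c'\roo^s)$ relative to the doubling comparison. After $k$ generations, down to scale $2^{-k}R$, one gets $\mu(E_{2^{-k}R} \cap B(x,R)) \le C(1-c'\roo^s)^k \mu(B(x,R))$, and rewriting $(1-c'\roo^s)^k = 2^{-k t}$ with $t = -\log_2(1-c'\roo^s) \ge c\roo^s$ (for a suitable absolute $c>0$, using $\log(1-u) \le -u$) yields exactly the Aikawa/Assouad-codimension bound \eqref{eq:bouli} with exponent $t \sim \roo^s$. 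The only care needed is the passage from the neighborhood $E_r$ to covers of $E$ itself and back: since $\MM^{\mu,q}_r(E \cap B)$ is comparable to $r^{-q}\mu((E\cap B)_r)$, and $(E \cap B(x,R))_r \subset E_r \cap B(x,2R)$, $s$-regularity absorbs all such constant-factor losses, changing only the constant $c$ in the exponent, not its form $c\roo^s$.

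\textbf{Main obstacle.} The routine parts are the covering/measure bookkeeping; the one genuine point requiring attention is the \emph{quantitative} dependence $t \ge c\roo^s$ — i.e. keeping track, through the iteration, that each generation removes a proportion of measure bounded below by a constant times $\roo^s$ and \emph{not} something that degenerates faster as $\roo \to 0$. This is exactly the mechanism already present in \cite[Theorem~4.7]{JarvenpaaJarvenpaaKaenmakiRajalaRogovinSuomala2007} for $\udimm$; the improvement to $\udima$ comes for free once the measure-decay estimate is made uniform over all balls $B(x,R)$ centered at $E$ and all scales, which is precisely what the porosity definition (uniform lower porosity at every point and every scale $0<r<\diam(E)$) guarantees. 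Once the uniform estimate $\mu(E_r \cap B(x,R)) \le C(r/R)^{c\roo^s}\mu(B(x,R))$ is in hand, Lemma~\ref{lemma:dima ja codima} together with \eqref{eq:dimai ja codima} closes the argument immediately.
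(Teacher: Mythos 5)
Your overall route coincides with the paper's: prove the codimension estimate $\lcodima(E)\ge c\roo^s$ and then conclude from \eqref{eq:dimai ja codima} (that is, from Lemma~\ref{lemma:dima ja codima} with an $s$-regular $\mu$). The paper's proof is exactly this two-line argument, except that the codimension bound is not re-derived: it is quoted from \cite[Corollary~4.6]{JarvenpaaJarvenpaaKaenmakiRajalaRogovinSuomala2007}, which is precisely the uniform, ball-localized measure-decay estimate $\mu(E_r\cap B(x,R))\le C(r/R)^{c\roo^s}\mu(B(x,R))$ that you set out to establish. Your closing step is fine; the issue is with your sketched derivation of that estimate.

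The gap is quantitative and sits exactly where you flag the "main obstacle". When you pass from scale $\delta$ to $\delta/2$, the hole provided by porosity at scale $\delta$ has radius only $\roo\delta<\delta/2$, so it need not be disjoint from $E_{\delta/2}$ (it may well be contained in it, since its points are only guaranteed to be at distance up to about $\roo\delta$ from $E$). Hence the claimed one-generation decay $\mu(E_{\delta/2}\cap B)\le(1-c'\roo^s)\,\mu(E_{\delta}\cap B)$ does not follow from one application of porosity per halving. The natural repair, which is what your parenthetical "a fixed comparable scale chosen so the holes fit" amounts to, is to step down by a factor comparable to $\roo$; then each step does remove a measure proportion of order $\roo^s$, but the number of steps from $R$ to $r$ is only about $\log(R/r)/\log(C/\roo)$, so the exponent you obtain is $t\approx c\roo^s/\log(1/\roo)$. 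This still shows $\udima(E)<s$, but it is strictly weaker than the stated bound, in which $c$ must be independent of $\roo$, and it would not reproduce (let alone improve) the form $s-c\roo^s$ of \cite[Theorem~4.7]{JarvenpaaJarvenpaaKaenmakiRajalaRogovinSuomala2007}. Obtaining the sharp exponent $c\roo^s$ requires an argument in which every dyadic scale contributes, for instance by iterating on quantities of the type $\mu(E_{c\roo 2^{-k}}\cap B)$ and extracting at each level a definite-measure shell of points whose distance to $E$ is comparable to $\roo 2^{-k}$; this finer estimate is exactly the content of the cited corollary. So either quote \cite[Corollary~4.6]{JarvenpaaJarvenpaaKaenmakiRajalaRogovinSuomala2007}, as the paper does, or carry out such a refined iteration; the halving argument as written does not yield the Proposition.
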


\begin{proof}
  The claim follows by first noting that $\lcodima(E) \ge c\roo^s$ by \cite[Corollary~4.6]{JarvenpaaJarvenpaaKaenmakiRajalaRogovinSuomala2007} and then applying \eqref{eq:dimai ja codima}.
\end{proof}

\begin{remark}\label{remark:non-regular porous}
 In doubling metric spaces the relation between porosity and dimension is more subtle; see \cite[Example 4.8 and Theorem 4.9]{JarvenpaaJarvenpaaKaenmakiRajalaRogovinSuomala2007}. Nevertheless,
 if $\mu$ is a doubling measure on $X$, then \cite[Corollary 4.5]{JarvenpaaJarvenpaaKaenmakiRajalaRogovinSuomala2007} implies that each $\roo$-porous set $E \subset X$
 satisfies $\lcodima(E)\ge t$, where $t>0$ only depends on $\roo$ and the doubling constant of $\mu$.
\end{remark}

\section{Whitney covers and dimension}\label{sect:whitney}

We assume throughout this section that $X$ is a doubling metric space.
If $E\sub X$ is a compact set, then it is rather obvious that the number of the Whitney balls in $\WW_k(X\setminus E)$ should be related to the Minkowski dimensions of $E$. In this section we make these relations precise and, in addition, show that for closed sets a similar correspondence holds between `local Whitney ball count' and the Assouad dimensions. In all of these results, we understand that $E$ is a nonempty proper closed subset of $X$ and furthermore, $\WW(X\setminus E)$ refers to an arbitrary (but fixed) Whitney cover of the complement of $E$. As a particular outcome of the considerations in this section we obtain the following characterizations for Minkowski dimensions. Recall that a metric space $X$ is \emph{$q$-quasiconvex} if there exists a constant $q \ge 1$ such that for every $x,y \in X$ there is a curve $\gamma \colon [0,1] \to X$ so that $x = \gamma(0)$, $y = \gamma(1)$, and $\length(\gamma) \le qd(x,y)$.

\begin{theorem}\label{thm:porous char of dimm}
  If $X$ is quasiconvex and $E\sub X$ is compact and porous, then
  \begin{align*}
     \udimm(E) &= \limsup_{k\to\infty}\tfrac{1}{k}\log_2 \#\WW_k(X \setminus E), \\
     \ldimm(E) &= \liminf_{k\to\infty}\tfrac{1}{k}\log_2 \#\WW_k(X \setminus E).
  \end{align*}
\end{theorem}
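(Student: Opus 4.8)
The strategy is to reduce the theorem to two comparisons: between the Minkowski $r$-content of $E$ and the Whitney ball count $\#\WW_k(X\setminus E)$, carried out at the dyadic scale $r=2^{-k}$. The upper bound (that $\#\WW_k(X\setminus E)$ is controlled by $\MM^\lambda_{2^{-k}}(E)$-type quantities) should hold for any compact $E$ in a doubling space, while the lower bound (that $\#\WW_k(X\setminus E)$ forces many balls in a covering of $E$) is where quasiconvexity and porosity enter. Throughout, the bounded overlap of the Whitney cover and the doubling property are the workhorses.

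\emph{First, the upper bound.} I would show that there is a constant $C$ so that $\#\WW_k(X\setminus E)\le C\,\MM^\lambda_{2^{-k}}(E)\,2^{k\lambda}$ for every $\lambda$, or more directly that $\#\WW_k(X\setminus E)$ is comparable (up to a bounded shift in $k$ and a multiplicative constant) to the number of $2^{-k}$-balls needed to cover the dyadic annulus $(X\setminus E)_{2^{-k+1}}\setminus(X\setminus E)_{2^{-k-3}}$, hence to the number of $2^{-k}$-balls needed to cover a neighborhood of $E$ at scale $2^{-k}$. The point is that each $B_i\in\WW_k(X\setminus E)$ has radius $\approx 2^{-k}$ and center within distance $\approx 2^{-k}$ of $E$; the centers are $c2^{-k}$-separated in the sense that boundedly many of them can meet any given $2^{-k}$-ball (this follows from the overlap bound $\sum\Char{B_i}\le C$ together with doubling, since comparable balls with bounded overlap are boundedly many in any fixed ball). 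Taking $\limsup_{k}\frac1k\log_2$ of $\#\WW_k(X\setminus E)\le C N(E,2^{-k})$, where $N(E,r)$ is the $r$-covering number, gives $\le\udimm(E)$, and similarly with $\liminf$ for the lower Minkowski dimension. This direction needs neither porosity nor quasiconvexity.

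\emph{Second, the lower bound.} Here I want $N(E,2^{-k})\le C\sum_{j=k-M}^{k+M}\#\WW_j(X\setminus E)$ for some fixed $M$, so that $\liminf/\limsup$ of $\frac1k\log_2 N(E,2^{-k})$ is controlled by that of $\#\WW_k$. Fix a maximal $2^{-k}$-packing $\{B(x_\ell,2^{-k})\}$ of $E$; then $\{B(x_\ell,2\cdot2^{-k})\}$ covers $E$, so $N(E,2^{-k})$ is comparable to the packing number. For each center $x_\ell\in E$ I must produce a Whitney ball of $X\setminus E$ of radius comparable to $2^{-k}$ lying near $x_\ell$, in such a way that each Whitney ball is charged by boundedly many $x_\ell$. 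This is exactly where porosity is used: by $\roo$-porosity there is a ball $B(y_\ell,\roo 2^{-k-1})\subset B(x_\ell,2^{-k-1})\setminus E$, so $\dist(y_\ell,X\setminus E)$ is bounded below by $\roo 2^{-k-1}$ and $y_\ell$ lies in some Whitney ball $B_{i(\ell)}$ whose radius is therefore $\gtrsim\roo 2^{-k}$; on the other hand $\dist(y_\ell,E)\le 2^{-k}$ bounds the radius above, so $B_{i(\ell)}\in\WW_j(X\setminus E)$ for $j$ in a window around $k$ of length depending only on $\roo$. Since the points $x_\ell$ are $2^{-k}$-separated, the associated $y_\ell$, and hence the balls $B_{i(\ell)}$, repeat only boundedly often (doubling plus the uniform radius comparison). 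Quasiconvexity is needed here to ensure that the relevant metric annuli/neighborhoods are nonempty and connected enough that the Whitney cover behaves regularly — concretely, to guarantee that $\dist(y_\ell,X\setminus E)$ really is comparable to the ``expected'' value and that no pathological large Whitney ball swallows $y_\ell$ from far away; equivalently it rules out the Whitney ball containing $y_\ell$ having radius much larger than $2^{-k}$, by forcing $X\setminus E$ to have a point near $y_\ell$ on the way toward $E$.

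\emph{Main obstacle.} The delicate point is the lower bound's uniform control: showing that the map $x_\ell\mapsto B_{i(\ell)}$ is boundedly-to-one and that every $B_{i(\ell)}$ genuinely lies in a bounded window $\WW_j$ with $|j-k|\le M(\roo,q)$. The upper estimate on the radius of $B_{i(\ell)}$ is the subtle half: a priori the Whitney ball through $y_\ell$ could have huge radius if $X\setminus E$ is very ``fat'' near $y_\ell$, but then $\dist(y_\ell,X\setminus E)$ would be large, contradicting $\dist(y_\ell,E)\le 2^{-k}$ only if $E$ is reasonably spread out near $x_\ell$ — and it is quasiconvexity that converts the metric bound $\dist(y_\ell,E)\le 2^{-k}$ into the needed bound on $\dist(y_\ell,X\setminus E)$, since along a quasigeodesic from $y_\ell$ to a point of $E$ one exits $X\setminus E$ within controlled distance. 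I expect this interplay — porosity giving the lower radius bound, quasiconvexity the upper one, doubling the bounded multiplicity — to be the technical heart, with everything else being bookkeeping with $\frac1k\log_2$ and finite windows of scales.
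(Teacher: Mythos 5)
Your first half is fine and is essentially the paper's route (Lemma~\ref{lemma:general bound} combined with Lemmas~\ref{lemma:up bound from dim}(2) and \ref{lemma:low bound for dim}(2)): bounded overlap plus doubling give $\#\WW_k(X\setminus E)\le C\,N(E,2^{-k})$, whence $\limsup_k\tfrac1k\log_2\#\WW_k\le\udimm(E)$ and $\liminf_k\tfrac1k\log_2\#\WW_k\le\ldimm(E)$, with no porosity or quasiconvexity. The genuine gap is in the converse inequality for the \emph{lower} Minkowski dimension. Your porosity construction attaches to each center $x_\ell$ of a maximal $2^{-k}$-packing a point $y_\ell$ with $\roo 2^{-k-1}\le\dist(y_\ell,E)\le 2^{-k-1}$ (you wrote $\dist(y_\ell,X\setminus E)$, but distance to $E$ is what is meant), and the Whitney ball containing $y_\ell$ then has radius between $\roo 2^{-k-1}/9$ and $2^{-k-1}/7$; note that the upper radius bound is automatic from $r_i=\tfrac18\dist(x_i,E)$ and the triangle inequality, so quasiconvexity is not doing what you claim it does. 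What you actually obtain is the windowed estimate $N(E,2^{-k})\le C\sum_{|j-k|\le M(\roo)}\#\WW_j(X\setminus E)$. This does give $\udimm(E)\le\limsup_k\tfrac1k\log_2\#\WW_k$, but it does \emph{not} give $\ldimm(E)\le\liminf_k\tfrac1k\log_2\#\WW_k$: from $N(E,2^{-k})\ge c2^{\lambda k}$ for all large $k$ it only yields, for each $k$, \emph{some} $j$ in the window with $\#\WW_j\gtrsim 2^{\lambda j}$, i.e.\ a set of good generations with bounded gaps; the liminf over all generations could still be smaller if $\#\WW_j$ collapses on the intermediate $j$'s, and nothing in your argument excludes such oscillation.

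This is precisely where the paper uses quasiconvexity, and for a different purpose than the one you assign to it. In Lemma~\ref{lemma:general low bound} the porosity point $y_j$ is joined to $w_j\in E$ by a curve of length at most $q\,d(y_j,w_j)$, and continuity of $x\mapsto\dist(x,E)$ along this curve produces a point whose distance to $E$ equals \emph{exactly} $5\cdot 2^{-k}$; the Whitney ball containing that point then lies in generation exactly $k$ (since its radius is trapped between $\tfrac59 2^{-k}$ and $\tfrac57 2^{-k}$). This gives the per-generation bound $\#\WW_k(X\setminus E)\ge c\,N(E,C(\roo)2^{-k})$ for \emph{every} sufficiently large $k$, which is what Lemma~\ref{lemma:low bound from dim}(2) needs and what the liminf equality requires. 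The comb-type example $X=\bigl([0,1]\times A_m\bigr)\cup\bigl(\{0\}\times[0,1]\bigr)$ discussed after Lemma~\ref{lemma:low bound from dim} shows that with only the windowed bound entire generations can indeed be almost empty, so your argument as it stands establishes the $\udimm$ identity but leaves the $\ldimm$ identity unproven; to close it you must replace the ``window'' step by the curve-plus-intermediate-value argument (or some substitute that pins a Whitney ball to each individual generation).
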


\begin{proof}
  The characterization for the upper Minkowski dimension follows from Lemma~\ref{lemma:up bound from dim}(2) and Lemma~\ref{lemma:up bound for dim}(2). For the lower dimension use Lemma~\ref{lemma:low bound for dim}(2) and Lemma~\ref{lemma:low bound from dim}(2), instead.
\end{proof}

\begin{theorem}\label{thm:mu=0 char of dimm}
  (1) If $E\sub X$ is compact, then
  \begin{equation*}
     \ldimm(E) \ge \liminf_{k\to\infty}\tfrac{1}{k}\log_2 \#\WW_k(X \setminus E).
  \end{equation*}

  (2) If $\mu$ is an $s$-regular measure on $X$ and $E\sub X$ is a compact set with $\mu(E)=0$, then
  \begin{equation*}
     \udimm(E) = \limsup_{k\to\infty}\tfrac{1}{k}\log_2 \#\WW_k(X \setminus E).
  \end{equation*}
\end{theorem}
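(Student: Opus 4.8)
The plan is to reduce both parts to inequalities relating the Whitney ball count $\#\WW_k(X\setminus E)$ to the Minkowski content $\MM^\lambda_r(E)$ with $r\sim 2^{-k}$. The key quantitative fact is that each ball $B_i\in\WW_k(X\setminus E)$ has $\dist(x_i,E)\sim 2^{-k}$, so its center lies in the annulus $A_k=E_{c2^{-k}}\setminus E_{c'2^{-k}}$ for suitable constants depending only on the Whitney constant $\tfrac18$; moreover the balls $B_i$ (or a fixed dilate) have bounded overlap. Thus $\#\WW_k(X\setminus E)$ is comparable, up to multiplicative constants from doubling, to the number of $2^{-k}$-balls needed to cover $A_k$, hence controlled above and below by quantities of the form $2^{-k\,\textrm{(something)}}\mu(A_k)$ once an $s$-regular measure is available.

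For part (1), which requires no measure and no porosity, I would argue directly: given a Whitney cover, for each $k$ pick the centers $x_i$ of the balls in $\WW_k(X\setminus E)$ and for each such $x_i$ choose a nearest point $p_i\in E$; since $d(x_i,p_i)\sim 2^{-k}$, the balls $B(p_i,2^{-k})$ are not too clustered (bounded overlap follows from that of $2B_i$ together with doubling), so a bounded number of them suffices to cover, and conversely any $2^{-k}$-cover of $E$ can contain at most $C$ centers $p_i$ per ball. Comparing cardinalities gives $\#\WW_k(X\setminus E)\le C\,N(E,2^{-k})$, where $N(E,r)$ is the covering number, which after taking $\tfrac1k\log_2$ and $\liminf$ yields $\liminf_k\tfrac1k\log_2\#\WW_k(X\setminus E)\le \ldimm(E)$; here one uses the standard fact that $\ldimm(E)=\liminf_{r\downarrow0}\log N(E,r)/\log(1/r)$. (This is also exactly the content of ``Lemma~\ref{lemma:up bound from dim}'' referenced in the proof of Theorem~\ref{thm:porous char of dimm}, applied without the porosity hypothesis.)

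For part (2), the $s$-regularity of $\mu$ lets one run the comparison in both directions. The lower bound $\limsup_k\tfrac1k\log_2\#\WW_k(X\setminus E)\le\udimm(E)$ is the measure-free inequality from part (1) (upper Minkowski version). For the reverse inequality one needs a \emph{lower} bound on the Whitney ball count in terms of the covering number of $E$, and this is precisely where $\mu(E)=0$ enters: take a maximal $2^{-k}$-packing $\{B(p_j,2^{-k})\}$ of $E$, so the balls $\{B(p_j,2\cdot2^{-k})\}$ cover $E$ and $N(E,2^{-k})\le \#\{p_j\}\le N(E,2^{-k}/2)$. For each $p_j$, because $\mu$ is $s$-regular and $\mu(E)=0$, the ball $B(p_j,2^{-k})$ has positive measure but meets $X\setminus E$ in a set of full measure on a definite fraction of scales; so there is a Whitney ball $B_i$ with $\dist(x_i,E)$ comparable to (not much smaller than) $2^{-k}$ intersecting $B(p_j,2^{-k})$ — one locates a point $y\in B(p_j,2^{-k})\cap(X\setminus E)$ at distance $\gtrsim 2^{-k}$ from $E$ and covers $y$ by a Whitney ball; its radius is then $\gtrsim 2^{-k}$, so $B_i\in\bigcup_{l\le k+O(1)}\WW_l(X\setminus E)$. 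Counting with bounded multiplicity over $j$ and over the $O(1)$ scales gives $N(E,2^{-k})\le C\sum_{l=k-O(1)}^{k+O(1)}\#\WW_l(X\setminus E)$, and passing to $\tfrac1k\log_2$ and $\limsup$ yields $\udimm(E)\le\limsup_k\tfrac1k\log_2\#\WW_k(X\setminus E)$. Combining the two inequalities proves the equality. (In the text this splits as ``Lemma~\ref{lemma:low bound for dim}'' with the $\mu(E)=0$ hypothesis replacing porosity.)

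The main obstacle is the lower bound in part (2): one must guarantee that near \emph{every} point of $E$ there is a Whitney ball of radius comparable to $2^{-k}$, i.e.\ that the complement $X\setminus E$ is ``not too thin'' at scale $2^{-k}$ near $p_j$. Porosity would give this for free (there is a definite-sized hole), but under only $\mu(E)=0$ one instead argues by measure: $\mu(B(p_j,2^{-k}))\ge c(2^{-k})^s>0$ while $\mu(B(p_j,2^{-k})\cap E)=0$, so $B(p_j,2^{-k})\setminus E$ has positive measure, and a covering/doubling argument produces a point in it at distance $\gtrsim 2^{-k}$ from $E$ — the Whitney ball through that point then has the required radius. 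Quantifying ``$\gtrsim 2^{-k}$'' uniformly in $j$ and $k$, using only the $s$-regularity constant and the doubling constant, is the delicate point; everything else is bookkeeping with bounded-overlap constants.
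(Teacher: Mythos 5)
Part (1) of your proposal is fine and is essentially the paper's argument: the inequality $\#\WW_k(X\setminus E)\le C\,n$ for any cover of $E$ by $n$ balls of radius $\approx 2^{-k}$ (the paper's Lemma \ref{lemma:general bound}, packaged as Lemma \ref{lemma:low bound for dim}(2)) converts a lower bound on the Whitney count into a lower bound on the Minkowski content, which gives $\ldimm(E)\ge\liminf_k\tfrac1k\log_2\#\WW_k(X\setminus E)$. The easy direction of part (2), $\limsup_k\tfrac1k\log_2\#\WW_k(X\setminus E)\le\udimm(E)$, is likewise correct and is Lemma \ref{lemma:up bound from dim}(2).

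The hard direction of part (2) is where your proposal has a genuine gap. You want the scale-by-scale inequality $N(E,2^{-k})\le C\sum_{l=k-O(1)}^{k+O(1)}\#\WW_l(X\setminus E)$, and to get it you claim that, since $\mu(E)=0$ and $\mu$ is $s$-regular, every ball $B(p_j,2^{-k})$ centered at $E$ contains a point of $X\setminus E$ at distance $\gtrsim 2^{-k}$ from $E$. That statement is exactly $\roo$-porosity of $E$ at scale $2^{-k}$, and it does \emph{not} follow from $\mu(E)=0$: a full-measure subset of $B(p_j,2^{-k})$ can lie entirely within distance $\eps 2^{-k}$ of $E$ for arbitrarily small $\eps$, so no covering/doubling argument can produce the required point with a uniform constant. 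Indeed, the paper's Example \ref{ex:thick cantor} (together with Theorem \ref{thm:char for ldims}) gives a compact $E\sub\R^2$ with $\HH^2(E)=0$ and $\ldimm(E)=2$ but $\liminf_k\tfrac1k\log_2\#\WW_k(\R^2\setminus E)=1$; along the corresponding sequence of scales (which span several consecutive generations by construction) one has $N(E,2^{-k})\ge 2^{(2-\eps)k}$ while $\sum_{l=k-O(1)}^{k+O(1)}\#\WW_l\le 2^{(1+\eps)k}$, so your inequality is simply false for general measure-zero sets, and nothing guarantees it along a subsequence realizing $\udimm(E)$ either. The paper avoids any such pointwise statement: assuming $\#\WW_k(X\setminus E)\le C2^{\lambda k}$ for all large $k$, it writes $E_r\sub E\cup\bigcup_{k\ge k_1}\bigcup\WW_k(X\setminus E)$ with $2^{-k_1}\approx r$, uses $\mu(E)=0$ and $s$-regularity to sum over \emph{all} finer generations, $\mu(E_r)\le C\sum_{k\ge k_1}2^{\lambda k}2^{-sk}\le Cr^{s-\lambda}$ (for $\lambda<s$), and then converts this neighborhood-measure bound into $\MM^\lambda_r(E)\le Cr^{\lambda-s}\mu(E_r)\le C$ via \eqref{eq:dimm ja codimm}, giving $\udimm(E)\le\lambda$ (this is Lemma \ref{lemma:mu0 bounds}(2) and Corollary \ref{coro:regular mu0 bounds}(2)). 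The summation over all finer scales is precisely what substitutes for the missing porosity, and your argument needs to be replaced by something of this kind.
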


\begin{proof}
  The first estimate is a consequence of Lemma~\ref{lemma:low bound for dim}(2) and the characterization for the upper dimension in the second claim follows from Lemma~\ref{lemma:up bound from dim}(2) and Corollary~\ref{coro:regular mu0 bounds}(2).
\end{proof}

The inequality of Theorem \ref{thm:mu=0 char of dimm} may be strict (for non-porous sets), as our Example \ref{ex:thick cantor} shows. As indicated above, the proofs of these two theorems are worked out through several lemmas, in which different upper and lower bounds for Minkowski dimensions are obtained from respective upper and lower bounds for Whitney ball count, and vice versa. In each case we also obtain the corresponding estimates between the local Whitney ball count and Assouad dimensions. Note that separately considered many of these lemmas hold under much weaker assumptions than those in Theorem~\ref{thm:porous char of dimm} or Theorem~\ref{thm:mu=0 char of dimm}.

In the Euclidean case, the relations between the upper Min\-kows\-ki dimension and upper bounds for Whitney ball (cube) count were established by Martio and Vuorinen \cite{MartioVuorinen1987}. Their results can now be viewed as special cases of our more general approach;
in particular, \cite[Theorem 3.12]{MartioVuorinen1987} contains (in $\R^d$) essentially the same information as the characterization of the upper Minkowski dimension in Theorem \ref{thm:mu=0 char of dimm}. On the contrary, to the best of our knowledge our systematic study of the correspondence between the lower Min\-kows\-ki dimension and lower bounds for Whitney ball count, and all the results concerning Assouad dimensions and local Whitney ball count, are new even in the Euclidean case.

We begin with a lemma which gives a general upper bound for the local Whitney ball count.

\begin{lemma}\label{lemma:general bound}
  If $E \subset X$ is a closed set and $0 < \delta \le 1$, then there is a constant $C~\ge~1$, depending only on $X$ and $\delta$, satisfying the following: If $B_0$ is a closed ball of radius $R$ centered at $E$, $0<r<R$, and $\{ B(w_j,r) \}_{j=1}^n$, $w_j \in E$, is a cover of $E \cap 2B_0$, then
  \begin{equation*}
    \# \WW_k(X \setminus E; B_0) \le Cn
  \end{equation*}
  for all $k \in \Z$ with $\delta r \le 2^{-k} \le r$.
\end{lemma}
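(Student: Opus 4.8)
The plan is to count the balls in $\WW_k(X\setminus E;B_0)$ by first locating them near the cover $\{B(w_j,r)\}$ and then using the doubling property of $X$ to bound how many Whitney balls of a given generation can cluster near any single $w_j$. Fix $k\in\Z$ with $\delta r\le 2^{-k}\le r$ and take any $B_i=B(x_i,r_i)\in\WW_k(X\setminus E;B_0)$, so $2^{-k-1}<r_i\le 2^{-k}$, $x_i\in X\setminus E$, $r_i=\tfrac18\dist(x_i,E)$, and $B_i\cap B_0\ne\emptyset$. First I would check that each such $x_i$ lies close to $E\cap 2B_0$: since $r_i\le 2^{-k}\le r<R$ and $B_i$ meets $B_0$, the point $x_i$ satisfies $\dist(x_i,B_0)<r_i<R$, hence $\dist(x_i,E)=8r_i\le 8\cdot 2^{-k}\le 8r$; choosing a nearest point $e_i\in E$ gives $e_i\in 2B_0$ (using $d(e_i,x_0)\le d(e_i,x_i)+d(x_i,x_0)\le 8r_i+2r_i+R<2R$, after noting $x_i\in B_0$ up to the small overlap — the precise constant here just needs $r<R$). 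Thus $e_i\in E\cap 2B_0$, so $e_i\in B(w_{j(i)},r)$ for some $j(i)$, and therefore $x_i\in B(w_{j(i)}, r+8r_i)\subset B(w_{j(i)}, 9r)$.

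Next I would fix one index $j$ and bound the number of $B_i\in\WW_k(X\setminus E;B_0)$ with $j(i)=j$. All the corresponding centers $x_i$ lie in the ball $B(w_j,9r)$, which has radius comparable to $2^{-k}$ (namely $9r\le 9\delta^{-1}2^{-k}$). Moreover the balls $B_i=B(x_i,r_i)$ with $r_i>2^{-k-1}$ have bounded overlap: $\sum_i\ch{B_i}\le C$ by the Whitney property. So the $B(x_i,2^{-k-1})\subset B_i$, being of comparable radius $2^{-k-1}$ and contained in $B(w_j,(9\delta^{-1}+1)2^{-k})$ with overlap at most $C$, can number at most $C'$ where $C'$ depends only on the doubling constant $N$ of $X$, on $\delta$, and on the Whitney overlap constant — this is the standard packing estimate: a ball of radius $\rho$ contains at most $N^{\lceil\log_2(2\rho/\rho')\rceil}$ disjoint balls of radius $\rho'$, inflated by the bounded overlap. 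Summing over $j$ from $1$ to $n$ then yields $\#\WW_k(X\setminus E;B_0)\le C'n$, which is the assertion with $C=C'$.

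The main obstacle I anticipate is purely bookkeeping: tracking the various absolute constants (the factors $8$, $9$, $\delta^{-1}$, the Whitney overlap constant, and the doubling constant) so that the final $C$ genuinely depends only on $X$ and $\delta$ and on nothing else — in particular not on $k$, $r$, $R$, $B_0$, or the cover. The key structural points are (i) that Whitney balls of generation $k$ have centers at controlled distance from $E$ and hence from $E\cap 2B_0$, so they get absorbed into the given cover after a bounded dilation, and (ii) that within a ball of radius $\asymp 2^{-k}$ only boundedly many Whitney balls of generation $k$ can fit, by doubling together with the bounded-overlap property. One should also remark (cf.\ \ref{rmk:small c}) that if a smaller radius parameter $\delta$ were used in the definition of the Whitney cover, the only effect is to enlarge these constants, so the statement is robust.
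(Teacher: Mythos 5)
Your argument is essentially the paper's own proof: you absorb each ball of $\WW_k(X\setminus E;B_0)$ into a bounded dilate $B(w_j,Cr)$ of some cover ball, and then, for fixed $j$, bound the number of generation-$k$ Whitney balls by a constant depending only on the doubling constant, on $\delta$, and on the Whitney overlap constant (the paper does this count via a maximal disjoint subfamily of shrunken Whitney balls, you via a covering-plus-bounded-overlap argument; these are the same idea). The one inaccurate point is the inequality $d(e_i,x_0)\le 8r_i+2r_i+R<2R$, which needs $10r_i<R$ rather than just $r<R$, so for $r$ comparable to $R$ the nearest point $e_i$ can fall outside $2B_0$; this is easily patched (since $B_i\cap B_0\ne\emptyset$ and $x_0\in E$ give $8r_i\le R+r_i$, the problematic regime is only $R/10\le r_i\le R/7$, where $R\le 10\cdot 2^{-k}$ and doubling plus bounded overlap already give $\#\WW_k(X\setminus E;B_0)\le C\le Cn$), and the paper's proof silently passes over the very same point when it asserts that the point of $E$ within $9r$ of $y$ is covered by some $B(w_j,r)$.
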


\begin{proof}
  If $B(x,r') \in \WW_k(X \setminus E; B_0)$ with $\delta r \le 2^{-k} \le r$, then
    $\delta r/2 \le 2^{-k-1} \le r' \le 2^{-k} \le r$.
  Moreover, for each $y \in B(x,r')$ we have
    $\dist(y,E) \le d(y,x) + \dist(x,E) \le r + 8r = 9r$.
  In particular, there is $j \in \{ 1,\ldots,n \}$ so that $y \in B(w_j,10r)$ and thus $B(x,r') \subset B(w_j,10r)$.

  Fix $j \in \{ 1,\ldots,n \}$ and denote $\WW_k\bigl( X \setminus E; B_0 \cap B(w_j,10r) \bigr)$ by $\{ B(x_i,r_i') \}_{i \in I}$. Note that each $B(x_i,r_i')$ is a subset of $B(w_j, 12r)$. Letting $\BB = \{ B(x_i,\delta r/4) \}_{i \in I}$, we have $\sum_{B \in \BB} \Char{2B} \le C_1$, where the constant $C_1 \ge 1$ is as in the definition of the Whitney cover. Let $\BB'$ be a maximal disjoint subcollection of $\BB$ and define $\BB_{B'} = \{ B \in \BB : \text{the center point of $B'$ is in $2B$} \}$ for all $B' \in \BB'$. By the maximality of $\BB'$, we have $\BB = \bigcup_{B' \in \BB'} \BB_{B'}$. Since $\#\BB_{B'} \le C_1$ for all $B' \in \BB'$, we obtain
  \begin{equation*}
    \#\WW_k\bigl( X \setminus E; B_0 \cap B(w_j,10r) \bigr) = \#\BB \le C_1\#\BB' \le C_1C_2\Bigl( \frac{\delta r/4}{12r} \Bigr)^{-s},
  \end{equation*}
  where $s>\udima(X)$ and the constant $C_2 \ge 1$ depends only on $X$.

  Since each ball in $\WW_k\bigl( X \setminus E; B_0 \bigr)$ is contained in some $B(w_j,10r)$, we conclude that
  \begin{equation*}
    \#\WW_k(X \setminus E; B_0) \le \sum_{j=1}^n \#\WW_k\bigl( X \setminus E; B_0 \cap B(w_j,10r) \bigr) \le C_1C_2n\Bigl( \frac{\delta}{48} \Bigr)^{-s}
  \end{equation*}
  as desired.
\end{proof}

We remark that a different version of the above lemma can be found in \cite[Lemma 3.8]{BjornBjornShanmugalingam2007}. Notice, in particular, that in \cite{BjornBjornShanmugalingam2007} it was assumed that Whitney balls with radii multiplied by $\tfrac12$ are pairwise disjoint, and that there the Whitney balls cover, in general, only a part of the complement of the closed set $E$.

We obtain the following two lemmas as rather immediate consequences of Lemma \ref{lemma:general bound}. The first one (Lemma \ref{lemma:up bound from dim}) gives upper bounds for Whitney ball count in terms of upper dimensions, while in the second one (Lemma \ref{lemma:low bound for dim}) lower bounds for Whitney ball count lead to lower estimates for lower dimensions.

\begin{lemma}\label{lemma:up bound from dim}
   (1) If $E \subset X$ is a closed set and $\udima(E) < \lambda$, then there is $C \ge 1$ such that if $B_0$ is a closed ball of radius $0<R<\diam(E)$ centered at $E$, we have
  \begin{equation*}
    \#\WW_k(X \setminus E; B_0) \le C2^{\lambda k}R^\lambda
  \end{equation*}
  for all $k > -\log_2 R$.

  (2) If $E \subset X$ is a compact set and $\limsup_{r \downarrow 0} \MM^\lambda_r(E) < \infty$ (or $\udimm(E) < \lambda$), then there are $k_0 \in \Z$ and $C \ge 1$ such that
  \begin{equation*}
    \#\WW_k(X \setminus E) \le C2^{\lambda k}
  \end{equation*}
  for all $k \ge k_0$.
\end{lemma}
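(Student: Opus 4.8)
The plan is to derive both parts directly from Lemma \ref{lemma:general bound} by feeding it an efficient cover of $E$ at the appropriate scale. For part (1), fix $k > -\log_2 R$, so that $2^{-k} < R$, and set $r = 2^{-k}$; then $k$ itself satisfies $\delta r \le 2^{-k} \le r$ with $\delta = 1$, so Lemma \ref{lemma:general bound} applies with this $\delta$. Since $\udima(E) < \lambda$, the definition of upper Assouad dimension yields a constant $C_0$ (independent of $B_0$, $R$, and $r$) such that $E \cap 2B_0$, having diameter at most $4R$, can be covered by at most $n \le C_0 (r/R)^{-\lambda} = C_0 2^{\lambda k} R^\lambda$ balls of radius $r$ centered at points of $E$ — one should be slightly careful here to first cover $E \cap 2B_0$ by balls of radius $r$ centered anywhere and then replace each by a concentric ball of radius $2r$ centered at a point of $E$ it meets, absorbing the factor $2^\lambda$ into $C$; alternatively, invoke the covering estimate for the ball $B(x_0, 4R) \supset 2B_0$ directly. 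Lemma \ref{lemma:general bound} then gives $\#\WW_k(X\setminus E; B_0) \le Cn \le C' 2^{\lambda k} R^\lambda$, which is the claim.

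For part (2), compactness of $E$ lets us replace the local cover by a global one. If $\limsup_{r\downarrow 0}\MM^\lambda_r(E) < \infty$, pick $M < \infty$ and $r_1 > 0$ so that $\MM^\lambda_r(E) \le M$ for all $0 < r < r_1$; by definition of the Minkowski $r$-content this means $E$ can be covered by $n \le M r^{-\lambda}$ balls of radius $r$ centered at $E$. (If instead only $\udimm(E) < \lambda$ is assumed, then choosing any $\lambda'$ with $\udimm(E) < \lambda' < \lambda$ gives $\limsup_{r\downarrow 0}\MM^{\lambda'}_r(E) = 0 < \infty$, hence $\MM^{\lambda'}_r(E) \le 1$ for small $r$, and then $\MM^\lambda_r(E) = r^{\lambda-\lambda'}\MM^{\lambda'}_r(E) \le r^{\lambda-\lambda'} \to 0$; so this case reduces to the previous one with $M = 1$ after shrinking $r_1$.) Now let $k_0$ be chosen so that $2^{-k_0} < r_1$, fix $k \ge k_0$, and apply Lemma \ref{lemma:general bound} with $\delta = 1$, with $r = 2^{-k}$, and with $B_0$ an arbitrary ball of radius $R \ge \diam(E)$ centered at $E$ so that $2B_0 \supset E$ and the cover of $E$ serves as a cover of $E \cap 2B_0$. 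This yields $\#\WW_k(X\setminus E) = \#\WW_k(X\setminus E; B_0) \le Cn \le CM 2^{\lambda k}$, as desired.

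The only genuinely delicate point is the bookkeeping between covers centered at arbitrary points and covers centered at $E$, and between the radius $2B_0$ in Lemma \ref{lemma:general bound} and the scale at which we control the covering number of $E$ — all of which only affects the multiplicative constant $C$. I would handle this once, cleanly, by noting that $\diam(2B_0) \le 4R$ and using the doubling covering estimate (the one with exponent $s > \udima(X)$, applied with the sharper exponent $\lambda > \udima(E)$ on the subset $E$) together with the standard trick of recentering balls at the set; there is no real obstacle, just care with constants. Note also that in part (1) the hypothesis $k > -\log_2 R$ is exactly what guarantees $r = 2^{-k} < R$, so that $(r/R)^{-\lambda} \le 2^{\lambda k} R^\lambda$ is a bound of the advertised form rather than a vacuous one, and that in part (2) the constant $C$ may absorb the doubling constant of $X$ and the factor $(\delta/48)^{-s}$ from Lemma \ref{lemma:general bound}.
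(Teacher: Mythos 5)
Your proof is correct and takes essentially the same route as the paper: both parts are deduced from Lemma \ref{lemma:general bound} by supplying a cover of $E$ at scale $2^{-k}$, obtained from the upper Assouad dimension bound in (1) and from the Minkowski content bound (after reducing the case $\udimm(E)<\lambda$ to it) in (2). The extra bookkeeping you flag (recentering, $E\cap 2B_0$ versus $E\cap B_0$, and choosing the large ball in (2) big enough that it meets every generation-$k$ Whitney ball) only affects the multiplicative constants, exactly as you say.
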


\begin{proof}
  If $k > -\log_2 R$, the definition of the upper Assouad dimension implies that the set $E \cap B_0$ can be covered by balls $\{ B(w_j,2^{-k}) \}_{j=1}^n$, $w_j \in E$, where $n \le C(2^{-k}/R)^{-\lambda}$
  for some constant $C \ge 1$. The first claim follows by recalling Lemma \ref{lemma:general bound}.

  In the second claim, the assumption on the Minkowski content shows that there are $M>0$ and $r_0>0$ so that $\MM^\lambda_r(E) \le M$ for all $0<r<r_0$. Let $k_0 \in \N$ be such that $2^{-k_0} \le r_0$ and $k \ge k_0$. Since now $E$ has a cover $\{ B(w_j,2^{-k}) \}_{j=1}^n$, $w_j \in E$, for which $n 2^{-\lambda k} \le 2M$, also the second claim follows from Lemma \ref{lemma:general bound}.
\end{proof}

\begin{lemma}\label{lemma:low bound for dim}
  Let $\ell \in \N$, $\lambda \ge 0$, and $c>0$.

  (1) If $E \subset X$ is a closed set and for each closed ball $B_0$ of radius $0<R<\diam(E)$ centered at $E$
  \begin{equation*}
    \#\WW_k(X \setminus E; B_0) \ge c2^{\lambda k}R^\lambda
  \end{equation*}
  for all $k \ge -\log_2 R + \ell$, then $\ldima(E) \ge \lambda$.

  (2) If $E \subset X$ is a compact set and there is $k_0 \in \Z$ for which
  \begin{equation*}
    \#\WW_k(X \setminus E) \ge c2^{\lambda k}
  \end{equation*}
  for all $k \ge k_0$, then $\liminf_{r \downarrow 0} \MM^\lambda_r(E) > 0$ (and thus $\ldimm(E) \ge \lambda$).
\end{lemma}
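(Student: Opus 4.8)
The plan is to turn a lower bound on the Whitney ball count into a lower bound on a packing/cover count, which directly yields a lower estimate for the (lower) Assouad or Minkowski dimension. The key geometric observation is a kind of converse to Lemma \ref{lemma:general bound}: a ball $B(x,r')\in\WW_k(X\setminus E;B_0)$ with $2^{-k-1}<r'\le 2^{-k}$ has center at distance $8r'\approx 2^{-k}$ from $E$ and is contained in $B_0$; hence if $\{B(w_j,2^{-k})\}_{j=1}^n$, $w_j\in E$, is any cover of $E\cap cB_0$ (for a suitable dilation constant), then each such Whitney ball lies within a bounded multiple of $2^{-k}$ of some $w_j$. By the doubling property, the number of Whitney balls in $\WW_k$ that can be ``charged'' to a single $w_j$ is bounded by a constant depending only on $X$ (again via a maximal-disjoint-subcollection argument applied to the $\tfrac14$-shrunk Whitney balls, exactly as in the proof of Lemma \ref{lemma:general bound}). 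Consequently $\#\WW_k(X\setminus E;B_0)\le Cn$, so the hypothesis $\#\WW_k(X\setminus E;B_0)\ge c2^{\lambda k}R^\lambda$ forces $n\ge c'2^{\lambda k}R^\lambda = c'(2^{-k}/R)^{-\lambda}$ for every cover of $E\cap cB_0$ by balls of radius $2^{-k}$.

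For part (1): fix a ball $B$ centered at $E$ of radius $0<R'<\diam(E)$ and a scale $0<r<R'$. Apply the previous paragraph with $B_0$ a slightly smaller ball so that $cB_0\supset B$, and with $k$ chosen so that $2^{-k}$ is comparable to $r$ (the hypothesis is assumed for all $k\ge-\log_2 R+\ell$, which covers all scales $r$ small relative to $R$). This shows that every cover of $E\cap B$ by balls of radius $\approx r$ has cardinality $\gtrsim (r/R')^{-\lambda}$; passing between radii $r$ and $2^{-k}$ only changes constants by the doubling property. By the definition of the lower Assouad dimension, this uniform lower bound on covering numbers — valid for all balls $B$ centered at $E$ and all sufficiently small scales — gives $\ldima(E)\ge\lambda-\eps$ for every $\eps>0$, hence $\ldima(E)\ge\lambda$. (One should double-check that the range $k\ge-\log_2 R+\ell$ translates into ``all $r$ small compared to $R$'', which suffices since the lower Assouad dimension is insensitive to a bounded range of comparable scales.)

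For part (2): now $E$ is compact, so take $B_0$ to be a fixed ball containing $E$; then $\WW_k(X\setminus E;B_0)=\WW_k(X\setminus E)$ for all large $k$, and any cover of $E$ by balls of radius $2^{-k}$ centered in $E$ has, by the argument above, cardinality $n\ge c'2^{\lambda k}$. Therefore $\MM^\lambda_{2^{-k}}(E)=\inf\{n(2^{-k})^\lambda\}\ge c'$ for all $k\ge k_0$, and a standard comparison between scales $2^{-k}$ and general $r\in(2^{-k-1},2^{-k}]$ upgrades this to $\liminf_{r\downarrow 0}\MM^\lambda_r(E)>0$; the inequality $\ldimm(E)\ge\lambda$ is then immediate from the definition of the lower Minkowski dimension.

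The main obstacle is the bookkeeping in the converse-counting step: one must be careful that the Whitney balls counted in $\WW_k(X\setminus E;B_0)$ are genuinely confined to a bounded neighborhood of $E\cap(\text{dilate of }B_0)$ and that the bounded-overlap constant of the Whitney cover, together with doubling, really does cap the number of $\WW_k$-balls near each cover ball $w_j$ by a constant independent of $k$, $r$, and $R$. This is essentially the same estimate as in Lemma \ref{lemma:general bound} read in the opposite direction, so I expect it to go through, but the constants and the admissible range of $k$ (the role of $\ell$) need to be tracked explicitly so that the final dimension bound is clean.
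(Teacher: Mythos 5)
Your proposal is correct and follows essentially the same route as the paper: the counting estimate you describe (and call ``a kind of converse to Lemma~\ref{lemma:general bound}'') is in fact precisely Lemma~\ref{lemma:general bound} itself, i.e.\ $\#\WW_k(X\setminus E;B_0)\le Cn$ for covers at scale comparable to $2^{-k}$, and the paper likewise combines it with the hypothesized lower bound on $\#\WW_k$ to force $n\gtrsim (r/R)^{-\lambda}$, disposing of the remaining range $2^{-\ell}R\le r<R$ by the trivial bound $n\ge 1$ after shrinking the constant, exactly as you indicate.
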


\begin{proof}
  Let $B_0$ be such a ball, fix $0<r<R2^{-\ell}$, and take $k \in \Z$ such that $2^{-k} < r \le 2^{-k+1}$. Then $k>-\log_2 R + \ell$. Let $\{ B(w_j,r) \}_{j=1}^n$, $w_j \in E$, be a cover of $E \cap B_0$. Then the assumption together with Lemma \ref{lemma:general bound} gives
  \begin{equation*}
    \Bigl( \frac{r}{R} \Bigr)^{-\lambda} \le {2^{\lambda k}}{R^\lambda} \le c^{-1} \#\WW_k(X \setminus E; B_0) \le c^{-1}Cn,
  \end{equation*}
  where $C \ge 1$ is as in Lemma \ref{lemma:general bound}.
  On the other hand, for $R2^{-\ell}\le r<R$, a sufficiently small $c>0$ gives $n \geq 1 \geq cC^{-1}(r/R)^{-\lambda}$
  for all $r$-covers of $E\cap B_0$, and thus we conclude
  $\ldima(E) \ge \lambda$.

  In the second claim, considering a cover of $E$, the same calculation yields $\MM^\lambda_r(E) \ge c_1$ (with $c_1>0$ depending on $X$, $E$, and $c$) for all $0<r<2^{-k_0}$, and the claim follows.
\end{proof}

In order to obtain results in directions converse to the previous lemmas, it is necessary to add some extra conditions on both the space $X$ and the closed set $E\sub X$. We begin again with a general lemma giving a lower bound for the local Whitney ball count in the complement of a porous set.

\begin{lemma} \label{lemma:general low bound}
  If $X$ is $q$-quasiconvex and $E \subset X$ is a closed $\roo$-porous set, then there is a constant $c>0$ depending only on $\roo$, $q$, and the doubling constant $N$ satisfying the following: If $B_0$ is a closed ball of radius $0<R<\diam(E)$ centered at $E$, $0<r<R/2q$, and $\{ B(w_j,r/2) \}_{j=1}^n$, $w_j \in E$, is a maximal packing of $E \cap \frac 1 2 B_0$, then
  \begin{equation*}
    \#\WW_{k}(X \setminus E; B_0) \ge cn,
  \end{equation*}
  where $k \in \Z$ is such that $\roo r/10 < 2^{-k} \le \roo r/5$.
\end{lemma}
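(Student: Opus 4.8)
The plan is to show that each ball $B(w_j,r/2)$ in the maximal packing of $E\cap\frac12 B_0$ forces at least one Whitney ball of the appropriate generation to meet $B_0$, and then to argue that the multiplicity of this assignment is bounded by a constant depending only on $\roo$, $q$, and $N$. First I would use the porosity of $E$: for the center $w_j\in E$ and the scale $r$ (note $r<R/2q<\diam(E)$), porosity gives a point $y_j\in X$ with $B(y_j,\roo r)\subset B(w_j,r)\setminus E$. In particular $\dist(y_j,E)\ge\roo r$. Since $X\setminus E$ is covered by the Whitney balls, pick a Whitney ball $B(x_i,r_i)\ni y_j$ with $r_i=\tfrac18\dist(x_i,X\setminus E)$. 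From $\dist(y_j,E)\ge\roo r$ and the triangle inequality one gets a two-sided bound on $\dist(x_i,E)$ and hence on $r_i$, roughly $r_i$ comparable to $\roo r$; choosing the generation $k$ with $\roo r/10<2^{-k}\le\roo r/5$ one checks that $B(x_i,r_i)\in\WW_k(X\setminus E)$. It remains to see that this Whitney ball meets $B_0$: since $y_j\in B(w_j,r)$, $w_j\in\frac12 B_0$, and $r<R/2q\le R/2$, we have $y_j\in B_0$ (with room to spare), so $B(x_i,r_i)\cap B_0\ne\emptyset$, i.e.\ $B(x_i,r_i)\in\WW_k(X\setminus E;B_0)$.

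Next I would bound the overlap of the assignment $j\mapsto B(x_i,r_i)$. Suppose a fixed Whitney ball $B=B(x,r')\in\WW_k(X\setminus E;B_0)$ is assigned to indices $j$ in some set $J$. For each such $j$ the associated point $y_j$ lies in $B$, and $w_j$ lies within distance $r+\dist(y_j,w_j)$ of $x$; since $2^{-k}\le\roo r/5\le r/5$ and $r'\le 2^{-k}$, all the points $w_j$, $j\in J$, lie in a single ball $B(x,Cr)$ with $C$ an absolute constant (something like $C=2$ suffices after the routine triangle-inequality bookkeeping). The balls $\{B(w_j,r/2)\}_j$ are pairwise disjoint by the packing property, so by the doubling property of $X$ the number of them contained in $B(x,Cr)$ is at most a constant $N'=N'(N)$. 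Hence $\#J\le N'$, and therefore
\begin{equation*}
  \#\WW_k(X\setminus E;B_0)\ge (N')^{-1} n =: cn,
\end{equation*}
with $c$ depending only on $\roo$, $q$, and $N$ as claimed. (Quasiconvexity is what guarantees, via the restriction $r<R/2q$, that the relevant neighborhoods stay inside $B_0$ and, more importantly, that distances behave well; in the purely metric part of the argument it enters to ensure $\diam(E)$-scale comparisons are not vacuous.)

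The step I expect to be the main obstacle is pinning down the generation bound: one must verify that for \emph{every} index $j$ the Whitney ball containing $y_j$ actually falls in the single generation $k$ with $\roo r/10<2^{-k}\le\roo r/5$, rather than in a nearby generation $k\pm1$. This requires being careful with the constant $\tfrac18$ in the Whitney radius and with the two-sided estimate on $\dist(x_i,E)$: from $B(y_j,\roo r)\subset X\setminus E$ one gets $\dist(y_j,X\setminus E)=0$ is false—rather $y_j$ is interior, and the correct inequalities are $\dist(x_i,X\setminus E)\le d(x_i,y_j)+\dist(y_j,X\setminus E)$ combined with $\dist(y_j,E)\le\diam B(w_j,r)\le 2r$ to bound $r_i$ from above, and $\dist(x_i,E)\ge\dist(y_j,E)-d(x_i,y_j)\ge\roo r-r_i$ to bound $r_i$ from below. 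Solving these gives $r_i$ trapped in an interval of the form $[\,c_1\roo r,\,c_2 r\,]$; the choice of the window $(\roo r/10,\roo r/5]$ for $2^{-k}$ is calibrated precisely so that $r_i$, which satisfies $2^{-k-1}<r_i\le 2^{-k}$ for the correct $k$, lands there. If the naive constants do not quite close, one shrinks $\tfrac18$-dependent thresholds or, as the paper already notes, absorbs the discrepancy into $c$ and into the remark about modifications for other values of $\delta$; in the worst case one replaces ``the'' generation $k$ by ``one of $k,k+1$'' and loses only a factor of $2$ in $c$.
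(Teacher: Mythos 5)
There is a genuine gap, and it sits exactly where you suspected: the generation-pinning step. Your assignment sends $j$ to the Whitney ball containing the porosity point $y_j$, but all you know about $y_j$ is $\roo r\le \dist(y_j,E)\le r$. Hence the radius of the Whitney ball containing $y_j$ lies only in an interval of the form $[\roo r/9,\,r/7]$, which spans roughly $\log_2(1/\roo)$ dyadic generations when $\roo$ is small; it need not land in the one prescribed generation $k$ with $\roo r/10<2^{-k}\le\roo r/5$, and your fallback of allowing $k$ or $k+1$ (losing a factor $2$) does not repair this. Moreover, quasiconvexity never enters your argument in a substantive way (your containment in $B_0$ only uses $r\le R/2$), and the lemma is actually false without it: the paper's own example $X=([0,1]\times A_m)\cup(\{0\}\times[0,1])$ with $E=[0,1]\times\{0\}$ is porous, yet $\#\WW_k(X\setminus E)\le C$ for infinitely many $k$, so no proof that ignores quasiconvexity can close.

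The paper's proof uses quasiconvexity precisely to fix this: it joins $y_j$ to $w_j\in E$ by a curve of length at most $q\,d(y_j,w_j)\le qr$, which stays in $B(w_j,qr)\subset B_0$ (this is where $r<R/2q$ is used), and then applies the intermediate value theorem to the continuous function $x\mapsto\dist(x,E)$ along the curve, which decreases from at least $\roo r$ to $0$, to find a point $x_j$ with $\dist(x_j,E)=5\cdot 2^{-k}$ exactly. The Whitney ball containing $x_j$ then has radius in $[\dist(x_j,E)/9,\dist(x_j,E)/7]\subset(2^{-k-1},2^{-k}]$, i.e.\ it lies in the prescribed generation $k$, and it is contained in $B(w_j,qr+\roo r)$. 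Your second step (bounded multiplicity via disjointness of the packing balls $B(w_j,r/2)$ and the doubling property) is fine and matches the paper's counting, but without the curve/intermediate-value device the first step does not produce a ball of generation $k$ for each $j$, so the proposal as written does not prove the statement.
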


\begin{proof}
  Fix $j \in \{ 1,\ldots,N \}$. The porosity assumption implies that there is $y_j \in B(w_j,r)$ satisfying $\dist(y_j,E) \ge \roo r$. By the $q$-quasiconvexity, there is a curve $\gamma_j \colon [0,1] \to X$ such that $\gamma_j(0) = y_j$, $\gamma_j(1) = w_j$, and $\length(\gamma_j) \le qd(y_j,w_j)$. Thus $\gamma_j([0,1]) \subset B(w_j,qr) \subset B_0$, where the latter inclusion holds since $qr\leq R/2$ and $w_j\in \frac 1 2 B_0$. From the continuity of both $\gamma_j$ and the distance function $x \mapsto \dist(x,E)$ it follows that there exists $x_j \in \gamma_j([0,1])$ so that $\dist(x_j,E) = 5\cdot 2^{-k}\leq\roo r$.

  Let $B(z_j,r_j) \in \WW(X \setminus E; B_0)$ be such that $x_j \in B(z_j,r_j)$. Then
  \begin{equation*}
   2^{-k-1} < \tfrac{5}{9} 2^{-k} \le \frac{\dist(x_j,E)}{9} \le r_j \le \frac{\dist(x_j,E)}{7} \le \tfrac 5 7 2^{-k} \leq 2^{-k} < \tfrac {\roo r}2
  \end{equation*}
  and, consequently, $B(z_j,r_j) \in \WW_{k}(X \setminus E; B_0)$ and $B(z_j,r_j) \subset B(w_j,qr+\roo r)$. Since the balls in the collection $\{ B(w_j,r/2) \}_{j=1}^n$ are pairwise disjoint, the doubling condition of $X$ implies that we may decompose $\{ B(w_j,qr+\roo r) \}_{j=1}^n$ into $M \ge 1$ pairwise disjoint collections, where $M$ depends only on $q$, $\roo$, and the doubling constant $N$. Since each ball $B(w_j,qr+\roo r)$ contains a ball from $\WW_{k}(X \setminus E; B_0)$, we conclude that
    $n \le M \#\WW_{k}(X \setminus E; B_0)$
  as desired.
\end{proof}

Lemma \ref{lemma:general low bound} now leads to
lower bounds for Whitney ball count in terms of lower dimensions (Lemma \ref{lemma:low bound from dim}), and upper bounds for upper dimensions follow from an upper bound for Whitney ball count (Lemma \ref{lemma:up bound for dim}).

\begin{lemma}\label{lemma:low bound from dim}
  Assume that $X$ is $q$-quasiconvex.

  (1) If $E \subset X$ is a closed $\roo$-porous set with $\ldima(E) > \lambda$, then there exist $c>0$ and $\ell\in \N$ such that
  \begin{equation*}
    \#\WW_k(X \setminus E; B_0) \ge c2^{\lambda k}R^\lambda
  \end{equation*}
  for all $k > -\log_2 R +\ell$ whenever $B_0$ is a closed ball of radius $0<R<\diam(E)$ centered at $E$.

  (2) If $E \subset X$ is a compact $\roo$-porous set and $\liminf_{r \downarrow 0} \MM^\lambda_r(E)>0$ (or $\ldimm(E) > \lambda$), then there exist $k_0 \in \Z$ and $c>0$ such that
  \begin{equation*}
     \#\WW_k(X \setminus E) \ge c2^{\lambda k}
  \end{equation*}
  for all $k \ge k_0$.
\end{lemma}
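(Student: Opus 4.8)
The plan is to derive both parts of Lemma~\ref{lemma:low bound from dim} directly from the general lower bound for the local Whitney ball count established in Lemma~\ref{lemma:general low bound}, by combining it with the defining covering/packing estimates for the lower Assouad dimension and the lower Minkowski content. The key point in each case is that a maximal $r/2$-packing of $E$ inside (half of) a ball has cardinality controlled from below by the appropriate dimension hypothesis, and Lemma~\ref{lemma:general low bound} transfers this lower bound to $\#\WW_k(X\setminus E;B_0)$ at the scale $2^{-k}$ comparable to $\roo r$.

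For part~(1), fix a closed ball $B_0$ of radius $0<R<\diam(E)$ centered at $E$, and fix a scale $0<r<R/2q$. Let $\{B(w_j,r/2)\}_{j=1}^n$, $w_j\in E$, be a maximal $r/2$-packing of $E\cap\tfrac12 B_0$; then $\{B(w_j,r)\}_{j=1}^n$ covers $E\cap\tfrac12 B_0$. Since $\ldima(E)>\lambda$, the definition of the lower Assouad dimension gives a constant $c_1>0$, independent of $B_0$ and $r$, with $n\ge c_1(r/R)^{-\lambda}$ (applied to the ball $\tfrac12 B_0$ of radius $R/2$ and covering radius $r$; the finitely many scales with $r$ comparable to $R$ are absorbed into the constant as in the proof of Lemma~\ref{lemma:low bound for dim}). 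By Lemma~\ref{lemma:general low bound}, $\#\WW_k(X\setminus E;B_0)\ge c\,n\ge cc_1(r/R)^{-\lambda}$ for the $k\in\Z$ with $\roo r/10<2^{-k}\le\roo r/5$. Since $2^{-k}$ and $r$ are comparable with constants depending only on $\roo$, we have $(r/R)^{-\lambda}\ge c_2 2^{\lambda k}R^\lambda$, and as $r$ ranges over $(0,R/2q)$ the corresponding $k$ ranges over all $k>-\log_2 R+\ell$ for a suitable $\ell\in\N$ depending only on $\roo$ and $q$. This gives the claimed bound $\#\WW_k(X\setminus E;B_0)\ge c'2^{\lambda k}R^\lambda$.

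For part~(2), we argue in the same way but globally. The hypothesis $\liminf_{r\downarrow 0}\MM^\lambda_r(E)>0$ (which also follows from $\ldimm(E)>\lambda$) yields $c_0>0$ and $r_0>0$ so that every $r$-cover of $E$ by balls centered at $E$ has at least $c_0 r^{-\lambda}$ elements for all $0<r<r_0$; in particular a maximal $r/2$-packing of $E$ has at least $c_0(r/2)^{-\lambda}$ balls. Applying Lemma~\ref{lemma:general low bound} with $B_0$ a fixed ball containing $E$ (of radius $R=\diam(E)$, say), we get $\#\WW_k(X\setminus E)\ge\#\WW_k(X\setminus E;B_0)\ge c\,n\ge cc_0'\,r^{-\lambda}\ge c'' 2^{\lambda k}$ for all $k$ corresponding via $\roo r/10<2^{-k}\le\roo r/5$ to scales $r<\min\{r_0,R/2q\}$, i.e.\ for all $k\ge k_0$ with $k_0$ depending on $E$, $\roo$, $q$. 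The main obstacle is essentially bookkeeping: one must check that the range of admissible scales $r$ produced by Lemma~\ref{lemma:general low bound} is an interval of the form $(0,cR)$ so that the resulting $k$'s are exactly a half-line $\{k>-\log_2 R+\ell\}$ (resp.\ $\{k\ge k_0\}$), and that the packing-vs-covering discrepancy and the comparability of $2^{-k}$ with $r$ only affect the constants, not the exponent $\lambda$; both are routine and parallel to the bookkeeping already carried out in the proof of Lemma~\ref{lemma:low bound for dim}.
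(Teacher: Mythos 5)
Your argument is correct and is essentially the paper's own proof: both parts feed the packing-count lower bounds coming from $\ldima(E)>\lambda$ (resp.\ from $\liminf_{r\downarrow 0}\MM^\lambda_r(E)>0$) into Lemma~\ref{lemma:general low bound}, the only cosmetic difference being that the paper fixes $k$ and sets $r=\frac{5}{\roo}2^{-k}$, whereas you let $r$ vary and track the induced $k$ (your surjectivity/bookkeeping claim is indeed routine). One small point in part~(2): with $B_0$ of radius $\diam(E)$ the lemma only sees a maximal packing of $E\cap\frac12 B_0$, which need not be all of $E$, so you should either take a larger ball (the proof of Lemma~\ref{lemma:general low bound} does not actually require $R<\diam(E)$, only $r<\diam(E)$) or, as the paper does, rerun that lemma's argument directly with a maximal packing of the whole set $E$.
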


\begin{proof}
  Fix such a ball $B_0$. Choose $\ell > \log_2\frac {10q} \roo$  and let $k > -\log_2 R +\ell$ whence $2^{-k} < \frac{\roo R}{10q}$. Define $r=\frac{5}{\roo}2^{-k}<\frac{R}{2q}$ and let $\{ B(w_j,r/2) \}_{j=1}^n$ be a maximal packing of $E\cap \frac 1 2 B_0$. Then $\{ B(w_j,r) \}_{j=1}^n$ is a cover of $E \cap \frac 1 2 B_0$, and hence, $\lambda < \ldima(E)$ implies $n \ge c_0(r/R)^{-\lambda}$. Moreover, as $2^{-k} = \frac{\roo}{5}r$, Lemma \ref{lemma:general low bound} guarantees the existence of a constant $c>0$, independent of $k$, so that
  \begin{equation*}
    \Bigl( \frac{5}{\roo} \Bigr)^{-\lambda}2^{\lambda k}R^\lambda = \Bigl( \frac{r}{R} \Bigr)^{-\lambda} \le c_0^{-1}n \le c_0^{-1}c^{-1} \#\WW_k(X \setminus E; B_0)
  \end{equation*}
  as desired.

  By considering a maximal packing of $E$, a similar calculation shows the second claim.
\end{proof}

\begin{remark}\label{rmk:small c}
In the case when one is dealing with a more general Whitney cover with parameter $\frac 1 3 \leq \delta \leq \frac 1 2$, it might be necessary to consider in Lemma \ref{lemma:general low bound} (and thus also in Lemma \ref{lemma:low bound from dim}) the sum over two consecutive generations $\WW_k$ and $\WW_{k+1}$, since the Whitney cover may lack a certain generation of balls. In particular, this is the case with the usual Euclidean Whitney cube decomposition.
\end{remark}

\begin{remark}
Porosity appears to be rather crucial in the proof of Lemma \ref{lemma:general low bound} (and thus also in Lemma \ref{lemma:low bound from dim}), and indeed, our Example \ref{ex:thick cantor} shows that the lower bound can fail for non-porous sets. The quasiconvexity assumption can be weakened (in particular the existence of rectifiable curves is not needed), but some kind of a `quantitative local connectivity' property is required.

To see this, consider, for instance, the set \[X=\big([0,1]\times A_m\big) \cup \big(\{0\}\times [0,1]\big)\sub \R^2\] equipped with the relative metric, where, for $m\in\N$, $A_m=\{0\}\cup\{2^{-mj}:j\in\N\}$, and $E= [0,1]\times \{0\}\sub X$. Then $E$ is $2^{-2m}$-porous and connected, and $\dimm(E)=1$. Nevertheless, for $k\in\N$ such that $2^{-k+3}\notin A_m$ (i.e., $k-3$ not divisible by $m$),
all Whitney balls in $\WW_k(X\setminus E)$ are centered at $\{0\}\times [0,1]$, and thus $\#\WW_k(X\setminus E)\leq C$ for these $k$. This shows that the estimate of Lemma \ref{lemma:low bound from dim}(2) can not be true for $E$.

However, even without the quasiconvexity assumption we obtain, under the other assumptions of Lemma \ref{lemma:general low bound}, that there exists $\ell\in\N$ (depending on $\roo$) such that
\[\sum_{j=k}^{k+\ell}\#\WW_{j}(X \setminus E; B_0) \ge cn.\]
It follows that all the consequences of that lemma also hold in non-quasiconvex spaces with the corresponding modifications.
\end{remark}

\begin{lemma}\label{lemma:up bound for dim}
  Assume that $X$ is $q$-quasiconvex, and let $\lambda \ge 0$ and $C \ge 1$.

  (1) If $E \subset X$ is a closed $\roo$-porous set such that for each closed ball $B_0$ of radius $0<R<\diam(E)$ centered at $E$ and for every $k \ge -\log_2 R$ we have
  \begin{equation*}
    \#\WW_k(X \setminus E; B_0) \le C2^{\lambda k}R^\lambda,
  \end{equation*}
  then $\udima(E) \le \lambda$.

  (2) If $E \subset X$ is a compact $\roo$-porous set and there exists $k_0 \in \Z$ such that for every $k \ge k_0$ we have
  \begin{equation*}
    \#\WW_k(X \setminus E) \le C2^{\lambda k},
  \end{equation*}
  then $\limsup_{r \downarrow 0} \MM^\lambda_r(E) < \infty$ (and thus $\udimm(E) \le \lambda$).
\end{lemma}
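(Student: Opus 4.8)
The idea is to reverse the reasoning of Lemma~\ref{lemma:low bound for dim}, using Lemma~\ref{lemma:general low bound} as the engine: that lemma converts a maximal packing of $E$ (localized inside $\tfrac12 B_0$) into a \emph{lower} bound for the Whitney ball count $\#\WW_k(X\setminus E;B_0)$ in a single generation $k$. Combining this with the hypothesized \emph{upper} bound for $\#\WW_k$ turns the packing count into an upper estimate, and an upper bound for packing (hence covering) numbers is exactly what bounds the Assouad, respectively Minkowski, dimension from above.

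For part~(1), fix a closed ball $B_0=B(x,R)$ centered at $E$ with $0<R<\diam(E)$ and let $0<r<R/2q$. Let $\{B(w_j,r/2)\}_{j=1}^n$, $w_j\in E$, be a maximal $(r/2)$-packing of $E\cap\tfrac12 B_0$, and choose $k\in\Z$ with $\roo r/10<2^{-k}\le\roo r/5$; since $\roo\le1$ and $q\ge1$ we have $2^{-k}<R$, so $k\ge-\log_2 R$ and the hypothesis applies to this $k$. By Lemma~\ref{lemma:general low bound} there is $c>0$, independent of $r$, with
\begin{equation*}
  cn\le\#\WW_k(X\setminus E;B_0)\le C2^{\lambda k}R^\lambda\le C(10/\roo)^\lambda(r/R)^{-\lambda},
\end{equation*}
the last step using $2^{-k}>\roo r/10$ and $\lambda\ge0$. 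Since $\{B(w_j,r)\}_{j=1}^n$ covers $E\cap\tfrac12 B_0$, this set is covered by at most $c^{-1}C(10/\roo)^\lambda(r/R)^{-\lambda}$ balls of radius $r$. A routine use of the doubling property of $X$ (to reach radii $R/2q\le r<R$, where the bound is trivial because $\lambda\ge0$, and radii $R$ comparable to $\diam(E)$, by covering with boundedly many balls of a fixed comparable radius) then promotes this to the covering estimate defining $\udima(E)\le\lambda$.

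For part~(2), I would instead rerun the proof of Lemma~\ref{lemma:general low bound} directly for a maximal $(r/2)$-packing $\{B(w_j,r/2)\}_{j=1}^n$, $w_j\in E$, of all of $E$: the auxiliary ball $B_0$ is inessential there, since $q$-quasiconvexity forces each connecting curve $\gamma_j$ into $B(w_j,qr)$ anyway, so the same computation gives $n\le M\#\WW_k(X\setminus E)$ with $M$ as in that lemma and $\roo r/10<2^{-k}\le\roo r/5$. Once $r<\diam(E)$ and $r\le5\cdot2^{-k_0}$ we have $k\ge k_0$, so the hypothesis gives
\begin{equation*}
  n\le M\#\WW_k(X\setminus E)\le MC2^{\lambda k}\le MC(10/\roo)^\lambda r^{-\lambda}.
\end{equation*}
As $\{B(w_j,r)\}_{j=1}^n$ covers $E$, this yields $\MM^\lambda_r(E)\le nr^\lambda\le MC(10/\roo)^\lambda$ for all sufficiently small $r>0$, so $\limsup_{r\downarrow0}\MM^\lambda_r(E)<\infty$. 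Then $\limsup_{r\downarrow0}\MM^{\lambda'}_r(E)=\limsup_{r\downarrow0}r^{\lambda'-\lambda}\MM^\lambda_r(E)=0$ for every $\lambda'>\lambda$, i.e.\ $\udimm(E)\le\lambda$.

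Nothing here is conceptually deep once Lemma~\ref{lemma:general low bound} is available; the point requiring care is the bookkeeping, namely that the single generation index $k\approx-\log_2(\roo r)$ produced by that lemma really lies in the range $k\ge-\log_2 R$ (resp.\ $k\ge k_0$) where the hypothesis is assumed, and the passage from packing counts to covering counts with all constants absorbed into the factor $(r/R)^{-\lambda}$ (resp.\ into a constant independent of $r$). The upgrade in part~(1) from the local packing bound to the full covering bound over all admissible scales is likewise routine but does use the doubling property, which is why I would only sketch it.
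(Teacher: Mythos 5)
Your proof is correct and follows essentially the same route as the paper: both combine Lemma~\ref{lemma:general low bound} (a maximal packing gives a lower bound for the local Whitney ball count at the scale $2^{-k}\approx\roo r$) with the hypothesized upper bound on $\#\WW_k$ to control covering numbers, absorbing the remaining ranges of $r$ and $R$ into constants. The only cosmetic differences are that the paper packs $E\cap B_0$ and applies the hypothesis to $2B_0$ (restricting to $R<\diam(E)/2$) where you pack $E\cap\tfrac12 B_0$ and use $B_0$ itself, and in part (2) you rerun the lemma globally while the paper invokes the same computation for a packing localized in a ball.
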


\begin{proof}
  Let $0<R<\diam(E)/2$. Define $r_0 = R/q$, fix $0<r<r_0$, and let $k \in \Z$ be as in Lemma~\ref{lemma:general low bound}. Furthermore, let $\{ B(w_j,r/2) \}_{j=1}^n$, $w_j \in E$, be a maximal packing of $E \cap B_0$. Then, using Lemma~\ref{lemma:general low bound} and the assumption for the ball $2B_0$, we obtain
  \begin{equation*}
    cn \le \#\WW_{k}(X \setminus E; 2B_0) \le C2^{\lambda k}(2R)^\lambda \le C\Bigl( \frac{10}{\roo} \Bigr)^{\lambda}\Bigl( \frac{r}{R} \Bigr)^{-\lambda},
  \end{equation*}
  where $c>0$ is as in Lemma \ref{lemma:general low bound}. Thus $E \cap B_0$ can be covered by at most $C_1({r}/{R})^{-\lambda}$ balls of radius $r$ for all
  $0<r<r_0$, where $C_1 \ge 1$ does not depend on $r$. For $r_0\leq r\leq R\leq \diam(E)$ the same bound follows by simply choosing the constant $C_1$ to be sufficiently large.

  In the second claim, considering a maximal packing of $E \cap B_0$, essentially the same calculation yields $\MM^\lambda_r(E) \le C$ for all $0<r<r_0$.
\end{proof}

If we assume, instead of porosity as in Lemma \ref{lemma:up bound for dim},
only that $E\sub X$ has zero $\mu$-measure, and also drop the quasiconvexity assumption, we obtain (in Lemma \ref{lemma:mu0 bounds} below) weaker estimates, involving the regularity dimensions of $\mu$, for the upper Assouad and Minkowski dimensions of $E$. However, as pointed out in Corollary \ref{coro:regular mu0 bounds}, in $s$-regular spaces the conclusions of Lemma \ref{lemma:mu0 bounds} coincide with those of Lemma \ref{lemma:up bound for dim} under these weaker assumptions. The obvious example of the unit ball $E=B(0,1)\sub\R^d$ shows that the condition $\mu(E)=0$ can not be removed from these considerations.

\begin{lemma}\label{lemma:mu0 bounds}
  Assume that $\mu$ is a doubling measure on $X$, and let $0 \le \lambda < \ldimreg(\mu)$ and $C > 0$.

  (1) If $E \subset X$ is a closed set with $\mu(E)=0$, and for each closed ball $B_0$ of radius $0<R<\diam(E)$ centered at $E$ and for every $k \ge -\log_2 R$ we have
  \begin{equation*}
    \#\WW_k(X \setminus E; B_0) \le C2^{\lambda k}R^\lambda,
  \end{equation*}
  then $\lcodima(E) \ge \ldimreg(\mu)-\lambda$ and $\udima(E) \le \lambda + \udimreg(\mu) - \ldimreg(\mu)$.

  (2) If $E \subset X$ is a compact set with $\mu(E)=0$, and there is $k_0\in\Z$ such that for every $k \ge k_0$ we have
  \begin{equation*}
    \#\WW_k(X \setminus E) \le C2^{\lambda k},
  \end{equation*}
  then $\lcodimm(E) \ge \ldimreg(\mu)-\lambda$ and $\udimm(E) \le \lambda + \udimreg(\mu) - \ldimreg(\mu)$.
\end{lemma}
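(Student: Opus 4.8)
The plan is to convert the hypothesis on the Whitney ball count into a decay estimate for the $\mu$-measure of the neighborhoods $E_r$, respectively $E_r\cap B_0$, and then to read the codimension bounds directly off the definitions; the bounds for $\udima(E)$ and $\udimm(E)$ then follow from Lemma~\ref{lemma:dima ja codima} and \eqref{eq:dimm ja codimm}, respectively. Throughout I fix an auxiliary exponent $t$ with $\lambda<t<\ldimreg(\mu)$, which exists precisely because $\lambda<\ldimreg(\mu)$.

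First I would set up the covering estimate. The geometric point is that every Whitney ball $B(x_i,r_i)\in\WW(X\setminus E)$ has $r_i=\tfrac18\dist(x_i,E)$, so each $y\in B(x_i,r_i)$ satisfies $7r_i\le\dist(y,E)\le 9r_i$; hence if $y\in E_r\setminus E$ lies in $B(x_i,r_i)\in\WW_k(X\setminus E)$, then $\tfrac72 2^{-k}<7r_i\le\dist(y,E)<r$, so $k\ge k_r$, where $k_r$ is the least integer with $2^{-k_r}<\tfrac27 r$. Note that $2^{-k_r}$ is comparable to $r$ and that $k_r>-\log_2 R$ whenever $r<R$. Since the Whitney cover covers $X\setminus E$ and $\mu(E)=0$, countable subadditivity gives
\[
  \mu(E_r\cap B_0)\le\sum_{k\ge k_r}\ \sum_{B_i\in\WW_k(X\setminus E;\,B_0)}\mu(B_i)
\]
in case~(1) (a Whitney ball meeting $(E_r\setminus E)\cap B_0$ also meets $B_0$, hence is counted in some $\WW_k(X\setminus E;B_0)$ with $k\ge k_r$), and the analogous inequality with $B_0$ replaced by $X$ in case~(2). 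Next I would estimate a single term: since $r_i<r<R$, a ball $B_i$ appearing in case~(1) lies in a fixed dilate of $B_0$, so the doubling property together with Lemma~\ref{thm:converse dbl} gives $\mu(B_i)\le C(2^{-k}/R)^t\mu(B_0)$; in case~(2) one compares instead to a fixed reference ball $B^*$ with $E_1\subset B^*$ and obtains $\mu(B_i)\le C_0\,(2^{-k})^t$. Inserting the hypothesis $\#\WW_k(X\setminus E;B_0)\le C2^{\lambda k}R^\lambda$ (legitimate since $k\ge k_r>-\log_2 R$) and summing the geometric series $\sum_{k\ge k_r}2^{k(\lambda-t)}$ — which converges because $\lambda-t<0$ and has sum comparable to $2^{k_r(\lambda-t)}\asymp r^{\,t-\lambda}$ — I arrive at
\[
  \frac{\mu(E_r\cap B_0)}{\mu(B_0)}\le C\Bigl(\frac rR\Bigr)^{t-\lambda}\qquad\text{for all }x\in E,\ 0<r<R<\diam(E),
\]
in case~(1), and at $\mu(E_r)\le Cr^{\,t-\lambda}$ for all sufficiently small $r>0$ in case~(2).

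To conclude, in case~(1) the last display is precisely \eqref{eq:bouli} with exponent $t-\lambda$, so $\lcodima(E)\ge t-\lambda$; letting $t\uparrow\ldimreg(\mu)$ yields $\lcodima(E)\ge\ldimreg(\mu)-\lambda$, and then the inequality $\lcodima(E)+\udima(E)\le\udimreg(\mu)$ of Lemma~\ref{lemma:dima ja codima} gives $\udima(E)\le\lambda+\udimreg(\mu)-\ldimreg(\mu)$. In case~(2), since $\MM^{\mu,q}_r(E)$ is comparable to $r^{-q}\mu(E_r)$, the bound $\mu(E_r)\le Cr^{\,t-\lambda}$ forces $\limsup_{r\downarrow0}\MM^{\mu,q}_r(E)=0$ for every $q<t-\lambda$, hence $\lcodimm(E)\ge t-\lambda$; letting $t\uparrow\ldimreg(\mu)$ and invoking $\lcodimm(E)+\udimm(E)\le\udimreg(\mu)$ from \eqref{eq:dimm ja codimm} gives the stated bound on $\udimm(E)$. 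I expect the main obstacle to be the covering estimate itself — verifying that $(E_r\setminus E)\cap B_0$ is covered by Whitney balls of level $k\ge k_r$ counted in $\WW_k(X\setminus E;B_0)$, and the attendant bookkeeping with $k_r$, with the dilates of $B_0$, and (in case~(2)) with the reference ball $B^*$; once the sum is correctly set up, the remaining estimates are routine uses of the doubling property and Lemma~\ref{thm:converse dbl}.
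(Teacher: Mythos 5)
Your proposal is correct and follows essentially the same route as the paper's proof: cover $E_r\cap B_0$ (up to the $\mu$-null set $E$) by Whitney balls of generations $k\gtrsim\log_2(1/r)$, bound each ball's measure by $C(2^{-k}/R)^t\mu(B_0)$ using the lower regularity exponent $t<\ldimreg(\mu)$, sum the geometric series to get $\mu(E_r\cap B_0)\le C(r/R)^{t-\lambda}\mu(B_0)$ (and $\mu(E_r)\le Cr^{t-\lambda}$ in case (2)), and then deduce the codimension bounds and, via Lemma~\ref{lemma:dima ja codima} and \eqref{eq:dimm ja codimm}, the dimension bounds. The bookkeeping with $k_r$ and the dilates of $B_0$ is handled correctly, so no gaps.
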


\begin{proof}
  Fix a ball $B_0$ of radius $R>0$ and let $0<r<R$. Take $k_1 \in \Z$ such that $2^{-k_1} \le r < 2^{-k_1+1}$. Observe that for $\lambda < t < \ldimreg(\mu)$ and $B \in \WW_k(X \setminus E; B_0)$ with $k \ge k_1$ we have $\mu(B) \le C_1\mu(B_0)2^{-tk}R^{-t}$ for some constant $C_1 \ge 1$ not depending on $R$ nor $k$. Since $E_r\cap B_0 \sub E\cup \bigcup_{k = k_1}^\infty \WW_k(X \setminus E; B_0)$, we obtain
  \begin{align*}
   \mu(E_r\cap B_0)
    &\le \mu(E) + C_1\sum_{k=k_1}^\infty \#\WW_k(X \setminus E; B_0) 2^{-tk} R^{-t} \mu(B_0) \\
    &\le CC_1\sum_{k=k_1}^\infty 2^{-k(t-\lambda)} R^{\lambda-t} \mu(B_0)
    \le \frac{CC_1}{1-2^{-(t-\lambda)}}R^{\lambda-t} 2^{-k_1(t-\lambda)} \mu(B_0) \\
    &\le \frac{CC_1}{1-2^{-(t-\lambda)}}\left(\frac r R\right)^{t-\lambda}\mu(B_0).
  \end{align*}
  The first claim follows now from the definition of the lower Assouad codimension and Lemma \ref{lemma:dima ja codima}.

  The above calculation also shows that under the assumptions of the second claim, we have $\mu(E_r)\leq C r^{t-\lambda}$ for all $0<r<2^{-k_0}$ (where the constant $C > 0$ depends on $E$). Hence $\MM^{\mu,t-\lambda}_r(E) \le C$ for all $0<r<2^{-k_0}$, and so $\lcodimm(E) \ge t-\lambda$. The upper bound for $\udimm(E)$ follows from the estimates in
  \eqref{eq:dimm ja codimm}.
\end{proof}

\begin{corollary}\label{coro:regular mu0 bounds}
  Assume that $\mu$ is an $s$-regular measure on $X$.

  (1) If $E \subset X$ is a closed set with $\mu(E)=0$ such that for each closed ball $B_0$ of radius $0<R<\diam(E)$ centered at $E$ and for every $k \ge -\log_2 R$ we have
  \begin{equation*}
    \#\WW_k(X \setminus E; B_0) \le C2^{\lambda k}R^\lambda,
  \end{equation*}
  then $\udima(E) \le \lambda$.

  (2) If $E \subset X$ is a compact set with $\mu(E)=0$ and there exists $k_0 \in \Z$ so that for every $k \ge k_0$ we have
  \begin{equation*}
    \#\WW_k(X \setminus E) \le C2^{\lambda k},
  \end{equation*}
  then $\udimm(E) \le \lambda$.
\end{corollary}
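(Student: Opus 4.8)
The plan is to deduce the corollary directly from Lemma~\ref{lemma:mu0 bounds}, exploiting the fact that for an $s$-regular measure one has $\ldimreg(\mu)=\udimreg(\mu)=s$, so that the ``defect'' term $\udimreg(\mu)-\ldimreg(\mu)$ appearing in the conclusions of that lemma vanishes.

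First I would dispose of the trivial range $\lambda\ge s$. Since $X$ carries an $s$-regular measure, $X$ is uniformly $s$-homogeneous, hence $\udima(E)\le\udima(X)=s\le\lambda$ for every $E\sub X$, and consequently also $\udimm(E)\le\udima(E)\le\lambda$; in this case both claims hold with no reference to the Whitney data.

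It remains to treat $0\le\lambda<s$. Here $\lambda<s=\ldimreg(\mu)$, so the hypotheses of Lemma~\ref{lemma:mu0 bounds} are met: in part~(1), $E$ is closed with $\mu(E)=0$ and the local Whitney ball count obeys $\#\WW_k(X\setminus E;B_0)\le C2^{\lambda k}R^\lambda$ for all $k\ge-\log_2 R$ and all balls $B_0$ of radius $R<\diam(E)$ centered at $E$; in part~(2), $E$ is compact with $\mu(E)=0$ and $\#\WW_k(X\setminus E)\le C2^{\lambda k}$ for $k\ge k_0$. Applying Lemma~\ref{lemma:mu0 bounds}(1) gives $\udima(E)\le\lambda+\udimreg(\mu)-\ldimreg(\mu)$, and since $s$-regularity forces $\dimreg(\mu)=s$, i.e.\ $\udimreg(\mu)=\ldimreg(\mu)=s$, this reads $\udima(E)\le\lambda$. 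Likewise Lemma~\ref{lemma:mu0 bounds}(2) yields $\udimm(E)\le\lambda+\udimreg(\mu)-\ldimreg(\mu)=\lambda$.

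I do not anticipate any genuine obstacle: the corollary is a clean specialization, and the only point requiring a moment's care is the case split at $\lambda=s$, needed because Lemma~\ref{lemma:mu0 bounds} is stated only for $\lambda<\ldimreg(\mu)$. One could alternatively absorb the trivial case by invoking $\lcodima(E)\ge 0$ and $\lcodimm(E)\ge 0$, but the direct homogeneity bound above is cleaner.
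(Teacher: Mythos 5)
Your proof is correct and follows exactly the route the paper intends: the corollary is a direct specialization of Lemma~\ref{lemma:mu0 bounds}, using that an $s$-regular measure has $\udimreg(\mu)=\ldimreg(\mu)=s$ so the defect term vanishes. Your separate treatment of the range $\lambda\ge s$ via $\udima(E)\le\udima(X)=s$ (and $\udimm(E)\le\udima(E)$ for compact $E$) is a small but valid piece of extra care that the paper leaves implicit.
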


\section{Tubular neighborhoods and spherical dimension}\label{sect:tube}

It is not hard to show that in an $s$-regular space $X$ the results of Theorems \ref{thm:porous char of dimm} and \ref{thm:mu=0 char of dimm} can be reformulated equivalently in terms of the measures of the `annular neighborhoods' $E_{2^{-k}}\setminus E_{2^{-k-1}}$ of a compact set $E\sub X$; recall that
$E_r=\{x\in X : \dist(x,E)< r\}$ is the open (\emph{tubular}) $r$-neighborhood of $E$, sometimes also called the \emph{parallel set} of $E$. Now one may ask what happens when the `width' of the annular neighborhoods is allowed to shrink towards zero, i.e., how $\mu(E_r\setminus E_s)$ behaves when $0<s\uparrow r$. In general metric spaces this does not necessarily lead to meaningful results, but in the Euclidean space $\R^d$ one is then prompted to obtain estimates for the $(d{-}1)$-dimensional `surface area' of the sets $E_r$, that is, for $\Ha^{d-1}(\bdry E_r)$. The set $\bdry E_r=\{x\in\R^d : \dist(x,E) = r\}$ is called the \emph{$r$-boundary} of $E$. We remark that the study of the behavior of $\Ha^{d-1}(\bdry E_r)$ is related to the study of curvature measures of $E$; see e.g.\ \cite{WinterZahle2012}.

In this section, we will show that there is an intimate connection between $\Ha^{d-1}(\bdry E_r)$ and the number of Whitney balls in $\WW_k(\R^d\setminus E)$, where $2^{-k}\approx r$. More precisely, we have the following estimate for each compact set $E\sub\R^d$:
\begin{equation}\label{eq:area and balls}
  c r^{d-1}\#\WW_k(\R^d\setminus E) \le \HH^{d-1}(\partial E_r) \le C r^{d-1}\sum_{j=k+2}^{k+4} \#\WW_j(\R^d \setminus E),
\end{equation}
where $2^{-(k+1)} < r \le 2^{-k}$, and the constants $c,C\geq 0$ depend only on the dimension $d$. These upper and lower estimates for $\Ha^{d-1}(\bdry E_r)$ are established in Propositions \ref{prop:local minkowski tube} and \ref{prop:low bound for d-1}, respectively.

The consequences of \eqref{eq:area and balls} are numerous. For instance, Oleksiv and Pesin \cite{OleksivPesin1985} gave the following general estimate for $\Ha^{d-1}(\bdry E_r)$, when $E\sub\R^d$ is bounded and $r>0$:
\begin{equation}\label{eq:OP}
 \Ha^{d-1}(\bdry E_r)\leq\begin{cases}
                          C_1 r^{d-1}, & \text{ for } r>\diam(E), \\
        C_2 r^{-1}, & \text{ for } 0<r\leq\diam(E),
                         \end{cases}
\end{equation}
where $C_1 \ge 1$ depends only on $d$ and $C_2 \ge 1$ depends only on $d$ and the diameter of $E$. These estimates are now easy consequences of the upper bound in \eqref{eq:area and balls}. Examples from \cite{OleksivPesin1985} show that the growth orders in \eqref{eq:OP} are essentially sharp for a general bounded set $E\sub\R^d$. Nevertheless, as soon as more information is available on the geometry of $E$, our \eqref{eq:area and balls} implies much better estimates on $\Ha^{d-1}(\bdry E_r)$.

Another application of \eqref{eq:area and balls} is related to the so-called \emph{lower spherical dimension} of a compact set $E\sub\R^d$. This was defined by Rataj and Winter \cite{RatajWinter2010} as
\begin{equation*}
  \ldims(E) = \inf\{ \lambda\geq 0 : \liminf_{r\downarrow 0} \frac{\Ha^{d-1}(\bdry E_r)}{r^{d-1-\lambda}} = 0 \}.
\end{equation*}
The corresponding upper dimension, defined as above but with $\limsup$ instead of $\liminf$, was shown in \cite{RatajWinter2010} to agree with the upper Minkowski dimension of $E$ if $\Ha^d(E)=0$ (this result can also be deduced from our more general estimate \eqref{eq:area and balls} as well), but for the lower dimensions Rataj and Winter~\cite{RatajWinter2010} obtained the estimate
\begin{equation}\label{eq:lower sdim}
\tfrac{d-1}{d}\ldimm(E)\leq \ldims(E)\leq \ldimm(E).
\end{equation}
Moreover, in \cite{Winter2011} the bounds in \eqref{eq:lower sdim}, apart from the end point in the lower bound,
were shown to be sharp. We are now able to bring further clarification into these results: First, we have the following consequence of \eqref{eq:area and balls} (compare this to Theorem \ref{thm:mu=0 char of dimm}):

\begin{theorem}\label{thm:char for ldims}
If $E\sub\R^d$ is a compact set, then
  \begin{equation*}
     \ldims(E) = \liminf_{k\to\infty}\tfrac 1 k {\log_2 \#\WW_k(\R^d \setminus E)}.
  \end{equation*}
\end{theorem}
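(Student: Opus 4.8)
The plan is to deduce both inequalities of the theorem from the two‑sided estimate \eqref{eq:area and balls}. Abbreviate $N_k=\#\WW_k(\R^d\setminus E)$, $g(r)=\HH^{d-1}(\partial E_r)$, and write $\beta$ for the right–hand side $\liminf_{k\to\infty}\tfrac1k\log_2 N_k$. For the lower bound $\ldims(E)\ge\beta$ I would note first that the set of admissible exponents in the definition of $\ldims(E)$ is upward closed, so it is enough to check that no $\lambda\in[0,\beta)$ is admissible. Fix such a $\lambda$; by definition of $\beta$ there is $k_0$ with $N_k>2^{\lambda k}$ for all $k\ge k_0$. If $0<r\le 2^{-k_0}$ and $k\ge k_0$ is chosen with $2^{-k-1}<r\le 2^{-k}$, then the lower bound in \eqref{eq:area and balls} gives $g(r)\ge c\,r^{d-1}N_k>c\,r^{d-1}2^{\lambda k}\ge c\,2^{-\lambda}r^{d-1-\lambda}$, using $2^{-k}<2r$. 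Hence $\liminf_{r\downarrow0}g(r)/r^{d-1-\lambda}\ge c2^{-\lambda}>0$, so $\lambda$ is not admissible, and letting $\lambda\uparrow\beta$ yields $\ldims(E)\ge\beta$.

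The reverse inequality is the delicate point, because the upper bound in \eqref{eq:area and balls} controls $g(r)$ at scale $2^{-k}$ by $N_{k+2}+N_{k+3}+N_{k+4}$, i.e.\ by Whitney counts three generations later, whereas $\beta$ only produces isolated good generations. To absorb this generation shift I would first establish (if it is not already available earlier in the paper) the auxiliary counting bound
\[
  N_k\le C(d)\,N_{k+1}\qquad\text{for every }k\in\Z ,
\]
i.e.\ the Whitney ball count can only increase, up to a dimensional constant, when the scale is halved. Given $B(x,r)\in\WW_k(\R^d\setminus E)$, choose a nearest point $p\in E$ and let $y$ be the point of the segment $[x,p]$ with $\dist(y,E)=3\cdot 2^{-k}$, which exists since $\dist(\cdot,E)$ decreases affinely along $[x,p]$ from $\dist(x,E)=8r>4\cdot 2^{-k}$ to $0$. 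Any Whitney ball $B(z,s)$ containing $y$ has $s\in\bigl(\tfrac19\dist(y,E),\tfrac17\dist(y,E)\bigr)\subset(2^{-k-2},2^{-k-1}]$, hence $B(z,s)\in\WW_{k+1}(\R^d\setminus E)$, and moreover $x\in B(z,6\cdot 2^{-k})$. Since the balls of $\WW_k(\R^d\setminus E)$ have radii larger than $2^{-k-1}$ and bounded overlap, only $C(d)$ of their centres can lie in a fixed ball of radius $6\cdot 2^{-k}$; thus $B(x,r)\mapsto B(z,s)$ is at most $C(d)$–to–one, which gives the bound.

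With this in hand, fix $\lambda>\beta$ and pick $\lambda'\in(\beta,\lambda)$, so that $N_k\le 2^{\lambda'k}$ for infinitely many $k$. For such a (large) $k$ the auxiliary bound gives $N_{k-1}\le C N_k$ and $N_{k-2}\le C^2N_k$, so with $m=k-4$ and any $r\in(2^{-m-1},2^{-m}]$ the upper bound in \eqref{eq:area and balls} yields $g(r)\le C\,r^{d-1}(N_{k-2}+N_{k-1}+N_k)\le C_1\,r^{d-1}2^{\lambda'k}$. Therefore $g(r)/r^{d-1-\lambda}=C_1^{-1}{}^{-1}$ — more precisely, $g(r)/r^{d-1-\lambda}\le C_1 r^{\lambda}2^{\lambda'k}\le C_1 2^{-\lambda(k-4)}2^{\lambda'k}=C_1 2^{4\lambda}2^{(\lambda'-\lambda)k}$, since $r\le 2^{-m}=2^{-(k-4)}$ and $\lambda\ge 0$. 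As $\lambda'<\lambda$, this tends to $0$ when $k\to\infty$ along the good subsequence, so $\liminf_{r\downarrow0}g(r)/r^{d-1-\lambda}=0$ and hence $\ldims(E)\le\lambda$; letting $\lambda\downarrow\beta$ completes the proof.

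The main obstacle is precisely the auxiliary estimate $N_k\le C(d)N_{k+1}$: it is what compensates for the built‑in shift of generations in \eqref{eq:area and balls}, and it is the reason why the characterization of $\ldims$, unlike that of $\ldimm$, holds for \emph{every} compact set with no porosity or measure hypothesis.
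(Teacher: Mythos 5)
Your argument is correct, and it follows the route the paper intends: Theorem \ref{thm:char for ldims} is stated as a consequence of \eqref{eq:area and balls}, i.e.\ of Propositions \ref{prop:local minkowski tube} and \ref{prop:low bound for d-1}, and your lower bound $\ldims(E)\ge\beta$ is exactly the straightforward use of the left-hand inequality. The one genuine addition is your treatment of the generation shift in the right-hand inequality via the auxiliary estimate $\#\WW_k(\R^d\setminus E)\le C\,\#\WW_{k+1}(\R^d\setminus E)$; your proof of it is sound (the distance function decreases affinely along the segment to a nearest point, so the intermediate point $y$ with $\dist(y,E)=3\cdot 2^{-k}$ exists, any Whitney ball containing it has radius in $[\dist(y,E)/9,\dist(y,E)/7]\subset(2^{-k-2},2^{-k-1}]$, and the map is boundedly many-to-one by the bounded overlap and a volume count — the constant also depends on the overlap constant of the fixed Whitney cover, which is harmless). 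This lemma is indeed needed if one insists on using \eqref{eq:area and balls} as a black box, since the three-generation window $N_{k-2}+N_{k-1}+N_k$ is not controlled along a sparse sequence of good generations. An alternative, closer to the ingredients the paper actually proves, is to bypass the shift altogether: for a good generation $k$ take $r=6\cdot 2^{-k}$, so that every Whitney ball meeting $\partial E_r$ has radius in $[r/9,r/7]\subset(2^{-k-1},2^{-k}]$ and hence lies in $\WW_k(\R^d\setminus E)$; then Lemma \ref{lemma:tube in cube} gives $\HH^{d-1}(\partial E_r)\le C\,2^{-k(d-1)}\#\WW_k(\R^d\setminus E)$ directly, and the upper bound for $\ldims(E)$ follows without the auxiliary counting lemma. (The garbled display ``$g(r)/r^{d-1-\lambda}=C_1^{-1}{}^{-1}$'' is evidently a typo; the corrected computation that follows it is right.)
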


It is also true that the corresponding characterization for the upper spherical dimension holds for all compact $E \subset \R^d$.
By Theorem \ref{thm:porous char of dimm}, Theorem \ref{thm:char for ldims} implies the following equivalence of lower dimensions for porous sets of $\R^d$:

\begin{corollary}\label{coro:s-dim for poro}
If $E \subset \R^d$ is compact and porous,
then $\ldims(E) = \ldimm(E)$.
\end{corollary}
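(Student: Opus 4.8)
The plan is to derive Corollary~\ref{coro:s-dim for poro} by combining the Whitney-ball characterization of the lower spherical dimension in Theorem~\ref{thm:char for ldims} with the Whitney-ball characterization of the lower Minkowski dimension for compact porous sets in Theorem~\ref{thm:porous char of dimm}. Both theorems express the relevant dimension as the same quantity, namely $\liminf_{k\to\infty}\tfrac1k\log_2\#\WW_k(\R^d\setminus E)$, so once the hypotheses of both are met the equality is immediate.

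Concretely, suppose $E\subset\R^d$ is compact and porous. Since $\R^d$ is $q$-quasiconvex (with $q=1$, as straight line segments realize the distance), the hypotheses of Theorem~\ref{thm:porous char of dimm} are satisfied, and hence
\[
  \ldimm(E) = \liminf_{k\to\infty}\tfrac1k\log_2\#\WW_k(\R^d\setminus E).
\]
On the other hand, $E$ is a compact subset of $\R^d$, so Theorem~\ref{thm:char for ldims} applies (with no porosity needed) and gives
\[
  \ldims(E) = \liminf_{k\to\infty}\tfrac1k\log_2\#\WW_k(\R^d\setminus E).
\]
Comparing the two displays yields $\ldims(E)=\ldimm(E)$, as claimed.

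One subtle point worth noting is that the Whitney cover $\WW(\R^d\setminus E)$ appearing in the two theorems is, in each case, an arbitrary but fixed Whitney cover; for the argument to be clean one should use the \emph{same} Whitney cover in both applications, which is legitimate since both theorems hold for any such fixed cover. (Alternatively, one may invoke the fact—implicit in the lemmas of Section~\ref{sect:whitney}—that the $\liminf$ of $\tfrac1k\log_2\#\WW_k$ is independent of the particular Whitney cover chosen.) There is no real obstacle here: the entire content of the corollary has already been packaged into Theorems~\ref{thm:porous char of dimm} and~\ref{thm:char for ldims}, and the proof is simply the observation that the right-hand sides coincide. The only thing to check is that the standing assumptions of both theorems—compactness and porosity of $E$, quasiconvexity of the ambient space—are met, and $\R^d$ with the Euclidean metric satisfies all of these.
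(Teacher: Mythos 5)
Your proof is correct and is exactly the argument the paper intends: the corollary follows by combining the Whitney-ball characterization of $\ldimm$ for compact porous sets (Theorem \ref{thm:porous char of dimm}, applicable since $\R^d$ is quasiconvex) with the characterization of $\ldims$ for arbitrary compact sets (Theorem \ref{thm:char for ldims}), both giving the same $\liminf$ of $\tfrac1k\log_2\#\WW_k(\R^d\setminus E)$. Your remark about fixing one and the same Whitney cover in both applications is a fine clarification, but no changes are needed.
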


One particular consequence of this result is that if $E \subset \R^d$ is an $s$-regular compact set with $0 \le s < d$, then both the upper and lower spherical dimensions of $E$ are equal to $s$. See also Corollary \ref{coro:saannollisen pullistus} for a more precise statement concerning $s$-regular sets.

Our second result on the lower spherical dimension is that we fix the gap concerning the sharpness of the lower bound in \eqref{eq:lower sdim} in Example \ref{ex:thick cantor}, where we show the following:

\begin{proposition}\label{prop:end-point}
  For each $d \in \N$ there exists a compact set $E\sub \R^d$ with $\HH^d(E)=0$, $\ldimm(E)=d$, and $\ldims(E)=d-1$.
\end{proposition}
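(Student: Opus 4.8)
The plan is to build $E$ as a countable union of thin ``slabs'' placed at dyadic scales, where scale $2^{-k}$ contains contributions only from a sparse set of indices $k$. Concretely, fix a rapidly increasing sequence $n_1 < n_2 < \cdots$ of integers (to be chosen), and at each ``active'' scale insert into $[0,1]^d$ a dense but very thin collection of parallel hyperplane-like pieces, so that for the active scales $2^{-n_j}$ one needs roughly $2^{n_j \cdot d}$ balls of radius $2^{-n_j}$ to cover $E$, forcing $\MM^{\lambda}_{2^{-n_j}}(E)$ to blow up for every $\lambda < d$ along the subsequence $r = 2^{-n_j}$; meanwhile $E$ is a countable union of sets each contained in finitely many $(d{-}1)$-dimensional pieces, so $\HH^d(E) = 0$. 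Using Theorem \ref{thm:char for ldims}, it suffices to control $\#\WW_k(\R^d \setminus E)$: I want $\frac1k \log_2 \#\WW_k(\R^d\setminus E)$ to have $\liminf$ equal to $d-1$ while $\ldimm(E) = d$. Since $\ldimm(E) \ge \liminf_k \frac1k \log_2 \#\WW_k$ always holds by Theorem \ref{thm:mu=0 char of dimm}(1), the two genuinely different requirements are: (i) produce a subsequence $k = m_i \to \infty$ along which $\#\WW_{m_i}(\R^d\setminus E) \le C 2^{(d-1)m_i}$, and (ii) show $\liminf_{r\downarrow 0} \MM^\lambda_r(E) = \infty$ for all $\lambda < d$, i.e.\ $\ldimm(E) = d$.

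For (ii), the idea is that between two consecutive active scales $2^{-n_j}$ and $2^{-n_{j+1}}$ there is a ``quiet'' range of scales where the set looks $(d{-}1)$-dimensional and contributes few balls, but precisely \emph{at} the active scales the set is ``full'': at scale $r \approx 2^{-n_j}$ the thin slabs inserted at that scale should be spaced at horizontal distance $\approx 2^{-n_j}$ within the unit cube, so that $E$ genuinely requires $\asymp (2^{n_j})^d$ balls of radius $2^{-n_j}$; if additionally we arrange that covering $E$ at \emph{any} small scale $r$ requires at least $\asymp r^{-d+o(1)}$ balls — by making the slab system at the most recent active scale $\le 1/r$ persist in forcing near-volume-filling lower bounds down to the next active scale — we get $\liminf_{r\downarrow 0} \MM^\lambda_r(E) = \infty$ for $\lambda < d$. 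This is the standard ``thick Cantor set of Minkowski dimension $d$ but measure zero'' mechanism, and the slabs must be chosen thin enough (say thickness $\ll 2^{-2n_{j+1}}$) to keep $\HH^d$ null; a Borel--Cantelli / summable-total-volume estimate gives $\HH^d(E) = 0$.

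For (i), the key point is that the Whitney ball count of $\R^d \setminus E$ at a scale $2^{-k}$ counts balls of radius $\approx 2^{-k}$ sitting at distance $\approx 2^{-k}$ from $E$, i.e.\ it sees the ``$(d{-}1)$-dimensional interface'' of $E$ at that scale, \emph{not} the bulk. If $k$ is chosen in the quiet range, far from every $n_j$, then near scale $2^{-k}$ the set $E$ is (locally, within each cube of the most recent active generation) contained in a bounded number of $(d{-}1)$-dimensional hyperplane pieces, whose $2^{-k}$-neighborhoods have $\HH^d$-measure $\asymp 2^{-k}$ times a bounded area; by \eqref{eq:area and balls} — or directly by Lemma \ref{lemma:general bound} applied with a cover of $E$ at scale $2^{-k}$ by $\asymp 2^{k(d-1)} \cdot (\text{bounded})$ balls coming from the hyperplane structure together with the at most $\asymp 2^{n_j d}$ already-fixed small pieces at lower active scales $n_j \ll k$ (whose contribution is a bounded multiple of $2^{(d-1)k}$ once $k$ is large relative to $n_j$) — we obtain $\#\WW_k(\R^d\setminus E) \le C 2^{(d-1)k}$ along the chosen subsequence $m_i$. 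Taking $m_i$ to be, say, the geometric mean $\lfloor \tfrac12(n_i + n_{i+1})\rfloor$ and letting $n_{i+1}/n_i \to \infty$ guarantees $m_i/n_i \to \infty$ so the older active scales are negligible, while $m_i \ll n_{i+1}$ so the next active scale has not yet appeared.

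\textbf{Main obstacle.} The delicate part is reconciling (i) and (ii): requirement (ii) forces $E$ to be nearly space-filling at \emph{all} small scales $r$ (to get $\liminf$, not just $\limsup$, of $\MM^\lambda_r$ infinite), yet requirement (i) needs $\#\WW_k$ to be as small as $2^{(d-1)k}$ for infinitely many $k$ — and the Whitney count at scale $2^{-k}$ is comparable to $r^{-(d-1)}\HH^{d-1}(\partial E_r)$, which is large whenever $E$ has a lot of surface at scale $r$. The resolution, and the thing requiring care, is that a near-volume-filling lower bound for the \emph{covering} number $\MM^\lambda_r(E)$ does not require much \emph{surface} $\HH^{d-1}(\partial E_r)$: a single convex body of diameter $\delta$ has $\MM^\lambda_r \asymp \delta^d r^{-d}$ huge for $r \ll \delta$ but $\HH^{d-1}(\partial E_r) \asymp \delta^{d-1}$ small. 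So the construction must make $E$, at the quiet scales $2^{-m_i}$, look like a union of $\asymp 2^{(d-1) n_{i}}$ \emph{solid blocks} of side $\asymp 2^{-n_i}$ (inherited from the $n_i$-th active generation) — volume-filling enough to keep the Minkowski dimension at $d$ at those scales — while simultaneously having total interface area only $\asymp 2^{(d-1)m_i}\cdot 2^{-m_i(d-1)} = O(1)$-per-unit-volume, i.e.\ $\HH^{d-1}(\partial E_{2^{-m_i}}) \asymp 2^{(d-1)m_i}\cdot 2^{-(d-1)m_i}$, which forces the blocks to have \emph{flat} boundaries aligned to dyadic grids so that their $r$-boundaries do not proliferate. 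Checking that the $r$-boundary area at the quiet scales is genuinely $O(r^{-(d-1)})$ and not inflated by the interaction of different generations — this is where the slabs being axis-parallel dyadic and the active scales being super-exponentially separated does the work — is the crux of the argument, and I expect it to occupy the bulk of the proof.
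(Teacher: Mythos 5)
Your plan has two genuine breakdowns, one at each end of the tension you yourself identify. First, the architecture cannot deliver compactness, $\HH^d(E)=0$, and your requirement at the active scales simultaneously. If $E$ is compact and really needs $\gtrsim 2^{n_jd}$ balls of radius $2^{-n_j}$ to be covered, then a maximal $2^{-n_j-1}$-packing of $E$ consists of $\gtrsim 2^{n_jd}$ disjoint balls contained in $E_{2^{-n_j}}$, so $\HH^d(E_{2^{-n_j}})\ge c>0$ uniformly in $j$, and letting $j\to\infty$ gives $\HH^d(E)\ge c$ --- contradicting measure zero. Concretely, slab systems that are $2^{-n_j}$-dense in $[0,1]^d$ for every $j$ have closure equal to the whole cube, so either your $E$ is not compact or it has full measure; the ``summable total volume / $(d{-}1)$-dimensional pieces'' argument applies to the union but not to its closure. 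A measure-zero compact set can only be covered by $2^{n(d-o(1))}$, never $\asymp 2^{nd}$, balls, and the paper's Example \ref{ex:thick cantor} achieves exactly this with a \emph{nested} Cantor-type construction (corner-replacement steps with ratio $\tfrac12$ that keep everything, interspersed with steps of ratio slightly below $\tfrac12$), getting $\ldimm(E)=d$ for free from a mass-distribution estimate showing $\dimh(E)=d$, rather than by estimating covering numbers at every scale.

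Second, your resolution of the ``main obstacle'' destroys requirement (ii): if at the quiet scales $E$ is contained in $\asymp 2^{(d-1)n_i}$ blocks of side $2^{-n_i}$, then already at $r=2^{-n_i}$ one has $\MM^\lambda_r(E)\lesssim 2^{n_i(d-1-\lambda)}\to 0$ for every $\lambda>d-1$, whence $\ldimm(E)\le d-1$. The difficulty comes from insisting on the absolute bound $\#\WW_k(\R^d\setminus E)\le C2^{(d-1)k}$, i.e.\ $\HH^{d-1}(\partial E_r)=O(1)$, at the good scales; but Theorem \ref{thm:char for ldims} only requires $\liminf_k\tfrac1k\log_2\#\WW_k(\R^d\setminus E)\le d-1$, so bounds of the form $2^{(d-1+\eps_k)k}$ with $\eps_k\to 0$ suffice. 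The paper exploits precisely this slack: the set stays nearly volume-filling at every scale, so both the piece count $\#\QQ_j$ and the total surface area $\approx\#\QQ_j\ell_j^{d-1}$ are huge, and the spherical dimension is pulled down to $d-1$ not by making the surface small but by choosing the gap width $d_j$ so fantastically small relative to $\ell_j$ that $\#\QQ_j\ell_j^{d-1}\le d_j^{-\eps_j}$ with $\eps_j\to 0$, giving $\HH^{d-1}(\partial E_r)\lesssim r^{-(d-1)-\eps_j}\cdot r^{2(d-1)}$ at scales $r\approx d_j$, i.e.\ exactly the subpolynomial Whitney bound needed. Finally, you never address the inequality $\ldims(E)\ge d-1$; it does not come from Theorem \ref{thm:mu=0 char of dimm}(1), but it is immediate from \eqref{eq:lower sdim} (or an isoperimetric argument) once $\ldimm(E)=d$ is established, and this step should be stated.
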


This example, featuring a `thick' Cantor type construction, also provides an answer to another question of Winter \cite[Remark 2.4]{Winter2011}, namely that the existence of the Minkowski dimension does not guarantee the equivalence of the lower Minkowski and the lower spherical dimensions. It is also trivial to see that the converse does not hold either: by Corollary \ref{coro:s-dim for poro} it suffices to construct a compact and porous set $E$ with $\ldimm(E) < \udimm(E)$; see \cite[\S 5.3]{Mattila1995}.

Before going into the details of our results, let us briefly mention some of the previous results concerning the $r$-boundaries $\partial E_r$ of a compact set $E\sub\R^d$. These sets are always $(d{-}1)$-rectifiable (see \cite[Lemma 15.13]{Mattila1995} and \cite[Proposition 2.3]{RatajWinter2010}), and have finite $(d{-}1)$-dimensional Hausdorff measure (with estimates), as already seen in \eqref{eq:OP}. If $d \in \{ 2,3 \}$ and $E\sub \R^d$ is compact, then the set $\partial E_r$ is a $(d{-}1)$-Lipschitz manifold for $\HH^{1}$-almost every $r \in (0,\infty)$. This is a result of Brown~\cite{Brown1972} for $d=2$ and Ferry \cite{Ferry1975} for $d=3$. Ferry also gave an example which shows that if $d \ge 4$, then the above claim fails: there exists a Cantor type compact set $E\sub \R^d$ such that $\bdry E_r$ is never a $(d{-}1)$-manifold when $0<r<1$. See also the articles \cite{Fu1985, GariepyPepe1972, RatajZajicek2012} for related results.

It is also true that for all but countably many $r>0$ it holds that
\begin{equation}\label{eq:stacho}
\tfrac{d}{dr}\Ha^{d}(E_r) = C{\Ha^{d-1}(\bdry E_r)};
\end{equation}
see \cite[Corollary 2.5]{RatajWinter2010} and also Stacho \cite{Stacho1976} and the references therein for earlier results related to \eqref{eq:stacho}. The equation \eqref{eq:stacho} is a crucial ingredient in the results of Rataj and Winter \cite{RatajWinter2010} on spherical dimensions. Contrary to this, our approach is completely geometric. In fact, our proofs are based on the following two geometric lemmas, in which we estimate the size of $\partial E_r$ in (or near) a Whitney ball (with a radius comparable to $r$).

\begin{lemma}\label{lemma:tube in cube}
  If $E \subset \R^d$ is a closed set, $k \in \Z$, and $B \in \WW_k(\R^d \setminus E)$, then \begin{equation*}
    \HH^{d-1}(\partial E_r \cap B) \le C2^{-k(d-1)}
  \end{equation*}
  for all $r>0$, where $C \ge 1$ depends only on $d$.
\end{lemma}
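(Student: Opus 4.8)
The plan is to bound the $(d{-}1)$-dimensional measure of $\partial E_r$ inside a single Whitney ball $B=B(x,r')\in\WW_k(\R^d\setminus E)$, where $2^{-k-1}<r'\le 2^{-k}$ and $r'=\tfrac18\dist(x,\R^d\setminus\Omega)=\tfrac18\dist(x,E)$, so $\dist(x,E)=8r'\in(2^{-k+2},2^{-k+3}]$. The key point is that $\partial E_r$ is a $(d{-}1)$-rectifiable set (as recalled in the excerpt, via \cite[Lemma 15.13]{Mattila1995} and \cite[Proposition 2.3]{RatajWinter2010}), hence its $\HH^{d-1}$ measure is locally finite; what we need is a \emph{uniform} bound of the correct order inside the fixed-size ball $B$, which has radius $r'\approx 2^{-k}$. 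So the whole content is: in any ball of radius comparable to $2^{-k}$, the set $\partial E_r$ has surface area at most $C\,2^{-k(d-1)}$, with $C$ depending only on $d$.

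First I would reduce to a purely metric/geometric statement about the distance function $f(y)=\dist(y,E)$. The set $\partial E_r$ is the level set $\{f=r\}$, and $f$ is $1$-Lipschitz. The standard fact here is that a level set of a $1$-Lipschitz function $f$ on $\R^d$ has the property that for any ball $B$, $\HH^{d-1}(\{f=r\}\cap B)$ is controlled by the measure of a thin tube $\{r-\eps<f<r+\eps\}\cap 2B$ divided by $\eps$; more precisely one uses the coarea-type inequality $\int_{\R}\HH^{d-1}(\{f=t\}\cap B)\,dt\le \Lip(f)\cdot\mathcal L^d(B)$ together with an integral-geometric covering argument, or alternatively one covers $\partial E_r\cap B$ by a Vitali family of balls $B(y_i,\eps)$ with $y_i\in\partial E_r\cap B$ and observes that each such ball, having $f(y_i)=r$, forces $\{r-\eps<f<r+\eps\}$ to contain $B(y_i,\eps)$ entirely (again by the $1$-Lipschitz bound), so $\#\{i\}\cdot\eps^d\le C\,\mathcal L^d(\{r-\eps<f<r+\eps\}\cap 2B)$; sending $\eps\to 0$ after recalling that $\partial E_r$ is $(d{-}1)$-rectifiable (so the $\eps$-content converges to $\HH^{d-1}$ up to the usual constant) gives $\HH^{d-1}(\partial E_r\cap B)\le C\liminf_{\eps\downarrow 0}\eps^{-1}\mathcal L^d(\{|f-r|<\eps\}\cap 2B)$. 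The cleanest route is simply to invoke that $\partial E_r$ is rectifiable and that for rectifiable sets the upper Minkowski content bounds $\HH^{d-1}$ from above (constant depending only on $d$): $\HH^{d-1}(\partial E_r\cap B)\le C\limsup_{\eps\downarrow 0}\eps^{-1}\mathcal L^d((\partial E_r)_\eps\cap 2B)$, and $(\partial E_r)_\eps\subset\{|f-r|\le\eps\}$.

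Next I would estimate the Lebesgue measure of the thin slab $\{|f-r|\le\eps\}\cap 2B$ for small $\eps$. Since $2B=B(x,2r')\subset B(x,16r')$ has diameter $\approx 2^{-k}$, the whole slab sits inside a ball of radius $C_d 2^{-k}$, so $\mathcal L^d(\{|f-r|\le\eps\}\cap 2B)\le \mathcal L^d(2B)\le C_d 2^{-kd}$ — but that only gives $2^{-kd}/\eps$, which blows up. The fix is to note that we do not need the slab measure for \emph{all} small $\eps$ but only the $\limsup$ of $\eps^{-1}$ times it; and $\{|f-r|\le\eps\}$ is itself a tubular slab whose measure is $O(\eps)$ \emph{within a fixed bounded region}, because $f$ restricted to $2B$ is $1$-Lipschitz and $2B$ has bounded diameter, so the coarea inequality $\int_{r-1}^{r+1}\HH^{d-1}(\{f=t\}\cap 2B)\,dt\le \mathcal L^d(3B)\le C_d 2^{-kd}$ shows that \emph{for a.e.} $t$ the level set has measure $\le C_d 2^{-k(d-1)}$ — but that's only a.e.\ $t$, not our specific $r$. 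To get all $r$ one instead argues directly: cover $\partial E_r\cap B$ by finitely many balls $B(y_i,\rho)$ of radius $\rho=2^{-k-3}$, say, with bounded overlap and $y_i\in\partial E_r\cap B$; for each such ball, the number of them is at most $C_d(2^{-k}/\rho)^d=C_d$ by the geometry of $2B$ — no wait, that just says the cover has boundedly many balls, giving $\HH^{d-1}_\rho(\partial E_r\cap B)\le C_d\rho^{d-1}\le C_d 2^{-k(d-1)}$ directly, provided we can cover $\partial E_r\cap B$ by a \emph{bounded} number of balls of radius $2^{-k-3}$, which is immediate since $B$ itself has radius $r'\le 2^{-k}$ and can be covered by $C_d$ balls of radius $2^{-k-3}$. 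Then letting $\rho\downarrow 0$ along the scales $2^{-k-3}, 2^{-k-4},\dots$ one would worry the count grows — here is where the $1$-Lipschitz structure genuinely enters: at scale $\rho=2^{-m}$, a ball $B(y_i,\rho)$ centered on $\{f=r\}$ lies in the slab $\{r-\rho<f<r+\rho\}$, the slab's width in the $f$-direction is $2\rho$, and because $\nabla f$ has unit length a.e.\ (in fact $|\nabla f|=1$ a.e.\ on the set where $f>0$, a standard fact for distance functions), the slab $\{|f-r|<\rho\}\cap 2B$ has volume $\le C_d\rho\cdot 2^{-k(d-1)}$ by Fubini in Lipschitz coordinates adapted to $f$ near $\partial E_r$; dividing by $\rho^d$ and multiplying by $\rho^{d-1}$ gives the uniform bound $\HH^{d-1}_\rho(\partial E_r\cap B)\le C_d 2^{-k(d-1)}$, and letting $\rho\downarrow 0$ yields the claim.

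The main obstacle is establishing, with a constant depending only on $d$, the slab volume bound $\mathcal L^d(\{|f-r|<\rho\}\cap 2B)\le C_d\,\rho\,2^{-k(d-1)}$ uniformly in $\rho$ and in the location of $B$; this is exactly the statement that the distance function has nondegenerate gradient, and the honest way to get it at all scales simultaneously is the coarea inequality applied on the fixed region $3B$: $\mathcal L^d(\{|f-r|<\rho\}\cap 2B)=\int_{r-\rho}^{r+\rho}\HH^{d-1}(\{f=t\}\cap 2B)\,dt$, so if one already has the desired bound $\HH^{d-1}(\{f=t\}\cap 2B)\le C_d 2^{-k(d-1)}$ for a.e.\ $t$ in a neighborhood of $r$, then the slab bound follows and closes a bootstrap. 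To break the circularity I would prove the a.e.-$t$ statement first (it is immediate from coarea: $\int_{r-1}^{r+1}\HH^{d-1}(\{f=t\}\cap 2B)\,dt\le C_d\,2^{-kd}$, so the integrand is $\le C_d 2^{-k(d-1)}$ on a set of $t$ of measure $\ge 3/2$, in particular on $t$ arbitrarily close to $r$ from both sides), then for the specific value $r$ use a limiting/upper-semicontinuity argument: choose $t_j\to r$ with $\HH^{d-1}(\{f=t_j\}\cap 2B)\le C_d 2^{-k(d-1)}$, cover $\{f=t_j\}\cap 2B$ efficiently, and push these covers to $\{f=r\}\cap B$ using that points of $\{f=r\}$ are within $|r-t_j|$ of $\{f=t_j\}$ (by the Lipschitz bound, since $f$ is $1$-Lipschitz and surjective onto an interval locally). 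Combined with the rectifiability of $\partial E_r$ (which guarantees $\HH^{d-1}=$ limit of $\rho$-contents up to a dimensional constant), this yields $\HH^{d-1}(\partial E_r\cap B)\le C 2^{-k(d-1)}$ with $C=C(d)$, as claimed.
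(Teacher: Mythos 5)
Your argument never actually establishes the one quantitative fact everything hinges on, namely a bound of order $2^{-k(d-1)}$ for the level set $\{f=r\}$ (or for the slab $\{|f-r|<\rho\}$, which by the coarea formula is the same thing on average) at the \emph{specific} value $r$. The Chebyshev step is where the circularity you worried about actually bites: integrating the coarea inequality over $(r-1,r+1)$ only shows that the exceptional set of levels $t$ where $\HH^{d-1}(\{f=t\}\cap 2B)$ exceeds the desired bound has measure at most $1/2$, and nothing prevents this exceptional set from containing an entire neighbourhood of $r$; the conclusion ``in particular on $t$ arbitrarily close to $r$ from both sides'' does not follow. To force good levels $t$ arbitrarily close to $r$ you would have to apply coarea on $(r-\delta,r+\delta)$ and you then need precisely $\mathcal{L}^d(\{|f-r|<\delta\}\cap 2B)\le C\delta\,2^{-k(d-1)}$, i.e.\ the statement being proved. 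The same objection applies to the appeal to ``$|\nabla f|=1$ a.e.\ and Fubini in Lipschitz coordinates adapted to $f$'': having unit gradient a.e.\ gives, via coarea, only the identity $\mathcal{L}^d(\{|f-r|<\rho\}\cap 2B)=\int_{r-\rho}^{r+\rho}\HH^{d-1}(\{f=t\}\cap 2B)\,dt$, so the asserted slab volume bound is again equivalent to (an averaged form of) the lemma and cannot simply be invoked. Moreover, even granting good levels $t_j\uparrow r$, the ``push the covers'' step is not justified: an $\HH^{d-1}$ bound on $\{f=t_j\}\cap 2B$ yields efficient covers by sets of arbitrarily small diameter, and enlarging every member of such a cover by the fixed amount $r-t_j$ can increase $\sum_i(\diam S_i)^{d-1}$ without bound; to control this you would need a covering or packing bound for $\{f=t_j\}$ at scale comparable to $r-t_j$ (equivalently, a lower density estimate for the level set), which is also unproved. (Two side remarks: the approximation of $\{f=r\}$ by $\{f=t\}$ within distance $|r-t|$ works only from below, along segments towards nearest points of $E$, not ``from both sides''; and the rectifiability of $\partial E_r$ is not needed for the inequality $\HH^{d-1}\le C\cdot$ lower Minkowski content, which holds for arbitrary sets.)

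What is missing is a direct geometric mechanism that works for the exact level $r$ and uses the defining property of a Whitney ball, namely that $\dist(B,E)$ is comparable to its radius. The paper's proof supplies exactly this: for $B=B(z,R)\in\WW_k(\R^d\setminus E)$ one has $\dist(B,E)=7R$, and to each $x\in\partial E_r\cap B$ one associates the point $y\in\partial B$ where the segment from $x$ to a nearest point $w\in E$ (so $|x-w|=r\ge 7R$) exits $B$. A short computation with orthogonal projections onto the line through two such points $x_1,x_2$ shows $|y_1-y_2|\ge|x_1-x_2|/2$, so the correspondence $x\mapsto y$ is injective and its inverse is a $2$-Lipschitz map from a subset of $\partial B$ onto $\partial E_r\cap B$, whence $\HH^{d-1}(\partial E_r\cap B)\le 2^{d-1}\HH^{d-1}(\partial B)\le C2^{-k(d-1)}$. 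This avoids coarea, Minkowski content, and rectifiability altogether; some argument of this quantitative, pointwise-in-$r$ kind is needed to close the gap in your approach.
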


\begin{proof}
  Let $B = B(z,R)\in \WW_k(\R^d \setminus E)$.
  We may clearly assume $\#(\bdry E_r\cap B)\geq 2$. Fix two different points $x_1,x_2\in \bdry E_r\cap B$. Let $w_1,w_2\in E$ be such that $|x_i-w_i|=r$, and let $y_i$ be the intersection of $\bdry B$ and the line segment $[x_i,w_i]$ for $i \in \{ 1,2 \}$. Also let $\ell$ be the line through $x_1,x_2$ and let $\proj_\ell$ denote the orthogonal projection onto $\ell$. Then
  \begin{equation}\label{eq:kaakana}
    |x_1-\proj_\ell(w_2)| \ge |x_1-x_2|/2\quad\text{ and }\quad |x_2-\proj_\ell(w_1)| \ge |x_1-x_2|/2,
  \end{equation}
  as otherwise we would have $|x_1-w_2|<r$ or $|x_2-w_1|<r$, which is not possible.

 Our goal is to show that
\begin{equation}\label{eq:lip1}
|y_1-y_2| \ge |x_1 - x_2|/2.
\end{equation}
If
  \begin{equation*}\label{eq:likella*}
    |x_i-\proj_\ell(w_i)| \ge |x_1-x_2|/2
  \end{equation*}
for $i \in \{ 1,2 \}$, then $\proj_\ell(w_i)$, and thus also $\proj_\ell(y_i)$, can not be on the half-line starting from $x_i$ and containing the interval $[x_1,x_2]$.
Hence, by \eqref{eq:kaakana}, it is obvious that
\begin{equation*}\label{eq:lip1*}
|y_1-y_2| \ge |\proj_\ell(y_1) - \proj_\ell(y_2)| \ge |x_1 - x_2|/2,
\end{equation*}
as desired.

We may thus assume that
  \begin{equation}\label{eq:likella}
    |x_i-\proj_\ell(w_i)| \le |x_1-x_2|/2
  \end{equation}
  for $i\in\{1,2\}$.
  Since $|y_i-w_i|\geq \dist(B,E) = 7 R$ and $|x_i-y_i|\leq 2 R$, it follows that $|x_i-y_i|\leq \frac27|y_i-w_i| \le \frac27|x_i-w_i|$. Thus $|x_i - \proj_\ell(y_i)| \le \tfrac27|x_i - \proj_\ell(w_i)|\le \tfrac17|x_1-x_2|$ by \eqref{eq:likella}, and consequently
  \begin{equation} \label{eq:lip2}
  \begin{split}
    |y_1-y_2| &\ge |\proj_\ell(y_1) - \proj_\ell(y_2)| \ge |x_1 - x_2| - |x_1 - \proj_\ell(y_1)| - |x_2 - \proj_\ell(y_2)| \\
    &\ge |x_1-x_2| - \tfrac27|x_1-x_2| = \tfrac57|x_1-x_2| > |x_1 - x_2|/2,
  \end{split}
  \end{equation}
  proving \eqref{eq:lip1}.
  Estimate \eqref{eq:lip1} shows that the above procedure of choosing $y \in \partial B$ for a given $x \in \partial E_r \cap B$ is an inverse of a $2$-Lipschitz mapping from a subset of $\partial B$ onto $\partial E_r \cap B$. Therefore
  \[
    \Ha^{d-1}(\bdry E_r\cap B)\leq 2 \Ha^{d-1}(\bdry B)\leq C2^{-k(n-1)}
  \]
  as claimed.
\end{proof}

We remark that in the case $d=2$, Lemma \ref{lemma:tube in cube} was proved in \cite[Corollary~1]{Jarvi1994} using a different idea.

Our second lemma provides an estimate in the converse direction:

\begin{lemma}\label{lemma:low bound for d-1 in W-balls}
  If $E \subset \R^d$ is a closed set, $k \in \Z$, and $B\in\WW_k(\R^d\setminus E)$, then
  \begin{equation*}
    \HH^{d-1}(\partial E_r \cap 8B) \ge cr^{d-1}
  \end{equation*}
  for all $2^{-k-1} < r\leq 2^{-k}$, where $c>0$ depends only on $d$.
\end{lemma}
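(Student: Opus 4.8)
The plan is to exploit the defining property of a Whitney ball $B = B(z,R) \in \WW_k(\R^d\setminus E)$, namely that $R = \tfrac18\dist(z,X\setminus E)$, so that $\dist(z,E) = 8R$ and $2^{-k-1} < R \le 2^{-k}$. First I would fix a nearest point $w \in E$ with $|z-w| = 8R$, and then, for a given $r$ with $2^{-k-1} < r \le 2^{-k}$, locate a point on the segment $[z,w]$ at distance exactly $r$ from $E$: since $\dist(z,E) = 8R \ge 8r/2 = 4r > r$ while $\dist(w,E) = 0 < r$, continuity of $x\mapsto\dist(x,E)$ along $[z,w]$ produces a point $p$ with $\dist(p,E)=r$, i.e.\ $p \in \bdry E_r$. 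Moreover $|p - z| \le |z - w| = 8R$, but one should check $p$ (or rather a whole $(d{-}1)$-dimensional piece of $\bdry E_r$ near $p$) lies inside $8B = B(z,8R)$; since $\dist(p, E) = r \le 2^{-k}$ and also $r$ is comparable to $R$, this is where the factor $8$ in the enlargement is used.

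Next, the key geometric step is to produce a large $(d{-}1)$-dimensional chunk of $\bdry E_r$ rather than a single point. The natural idea is to consider the ``outer'' part of the sphere $\bdry B(w,r)$ of radius $r$ centered at the near point $w$: intuitively, points on $\bdry B(w,r)$ lying in the direction away from $E$ relative to $w$, inside a suitable cone, have $\dist(\cdot,E) = r$ because $w$ realizes the distance. Concretely I would take the spherical cap $S = \{ x \in \bdry B(w,r) : \langle x - w, z - w\rangle \ge \tfrac12 r|z-w| \}$ (a cap of fixed opening angle around the direction $z-w$); for $x \in S$ we have $|x - w| = r$, so $\dist(x,E) \le r$, and I would argue $\dist(x,E) \ge r$ as well, using that any other point $w' \in E$ is far from $z$ relative to scale $r$ (since $\dist(z,E) = 8R \gtrsim r$) and a triangle-inequality/cone estimate showing $|x - w'| \ge r$. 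This gives $S \subset \bdry E_r$. Since $S$ is a spherical cap of a radius-$r$ sphere with fixed opening angle, $\HH^{d-1}(S) \ge c r^{d-1}$ with $c = c(d) > 0$. Finally, $S \subset B(w, r) \subset B(z, |z-w| + r) = B(z, 8R + r) \subset B(z, 10R) \subset \ldots$; one must fit this inside $8B = B(z,8R)$, which may require shrinking the cap slightly or centering it more carefully at the point $p \in [z,w] \cap \bdry E_r$ found above rather than at $w$. Using $p$ as the center: the cap $S' = \{x \in \bdry B(w,r) : \langle x - w, p - w \rangle \ge \tfrac12 r |p-w|\}$ stays within $B(p, cr) \subset 8B$ provided $|p - z| \le 8R - cr$, and since $|p-z| \le 8R - r$ this works for the right choice of cap angle.

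The main obstacle I expect is the verification that the spherical cap genuinely lies in $\bdry E_r$, i.e.\ that for $x$ in the cap we have $\dist(x,E) = r$ exactly, and not less. The inequality $\dist(x,E) \le r$ is free (from $|x-w|=r$, $w\in E$), so the real content is the lower bound $\dist(x,E)\ge r$: I must rule out the existence of $w' \in E$ with $|x - w'| < r$. The way to do this is to use that $E$ stays at definite distance from $z$: any $w' \in E$ satisfies $|w' - z| \ge \dist(z,E) = 8R > 7r$ (say), and combine this with $|x - z| \le |x - w| + |w - z| = r + 8R$ and the cone condition on $x$ to get a contradiction via the parallelogram law or a direct projection argument — the cone condition ensures $x$ is ``behind'' $w$ as seen from $z$, so moving from $x$ toward any far-away $w'$ cannot get within $r$. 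Getting the numerology of the cone angle, the enlargement factor $8$, and the comparability $r \approx R$ to all line up is the delicate bookkeeping; once that is arranged, the measure bound $\HH^{d-1}(\text{cap}) \ge c(d) r^{d-1}$ is immediate.
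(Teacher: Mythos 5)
There is a genuine gap, and it is exactly at the point you flag as the ``main obstacle'': the claim that a fixed-angle spherical cap $S\subset\bdry B(w,r)$ (centered at the nearest point $w\in E$, opening toward $z$) is contained in $\bdry E_r$ is false in general, and no triangle-inequality or cone argument based on $\dist(z,E)=8R$ can rescue it. The information ``every $w'\in E$ is far from $z$'' does not prevent $w'$ from being very close to $w$, and such points kill the lower bound $\dist(x,E)\ge r$ on the cap. Concretely, take $E$ to contain a piece of the sphere $\bdry B(z,8R)$ through $w$ (this is consistent with $B$ being a Whitney ball, since $E$ still avoids $B(z,8R)$). For $x=w+ru$ with $u$ making angle $\theta>0$ with the inward radial direction $(z-w)/|z-w|$, one computes $|x-z|=\sqrt{64R^2-16Rr\cos\theta+r^2}>8R-r$, so $\dist(x,E)\le 8R-|x-z|<r$. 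Thus every point of your cap except the single radial point fails to lie on $\bdry E_r$; the set $\bdry E_r\cap 8B$ can hug a sphere concentric with $\bdry B(z,8R)$ and stay nowhere near $\bdry B(w,r)$. Your first paragraph (intermediate value theorem along $[z,w]$) only produces one point of $\bdry E_r$, and the step that was supposed to upgrade this to a $(d{-}1)$-dimensional quantity is the step that breaks.

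The fix is to keep the intermediate-value idea but apply it along a whole cone of rays and then use a Lipschitz projection, which is what the paper does. For each half-line $\ell$ from $z$ making angle at most $\alpha_0=\arcsin(1/16)$ with $[z,w]$, one has $\dist(\cdot,E)\ge 4R>r$ on $\ell\cap\bdry B(z,4R)$ and $\dist(\cdot,E)\le R/2\le 2^{-k-1}<r$ at the point of $\ell$ closest to $w$, so by continuity $\ell$ meets $\bdry E_r$ inside $B(z,8R)=8B$. The radial projection $\proj(x)=z+4R\frac{x-z}{|x-z|}$ is $1$-Lipschitz outside $B(z,4R)$, and $\proj(\bdry E_r\cap 8B)$ contains the cap of angular radius $\alpha_0$ on $\bdry B(z,4R)$, whence $\HH^{d-1}(\bdry E_r\cap 8B)\ge cR^{d-1}\ge cr^{d-1}$. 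In other words, one does not locate a nice cap inside $\bdry E_r$ (it need not exist); one shows instead that $\bdry E_r\cap 8B$ surjects, under a $1$-Lipschitz map, onto a cap of definite size.
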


\begin{proof}
  Fix $B=B(y,R)\in\WW_k(\R^d\setminus E)$ and $2^{-k-1} < r\leq 2^{-k}$, and
  let $w\in E$ be such that $|y-w|=8R$. Then for all $x\in \partial B(y,4R)$ we have that $\dist(x,E) \geq 4R > 2^{-k}\geq r$, and for all $x\in \partial B(w,2^{-k-1})$ that $\dist(x,E)\leq 2^{-k-1}<r$. Moreover, if $\ell$ is a half-line starting from $y$ and making an angle $0\leq \alpha\leq \alpha_0=\arcsin (1/16)$ with the line segment $[y,w]$ from $y$ to $w$, then $\ell$ intersects both $\partial B(y,4R)$ and $\partial B(w,2^{-k-1})$, and so there exists $x\in\ell\cap \partial E_r\cap B(y,8R)$. Let us consider the radial projection onto $\partial B(y,4R)$, that is, the mapping $\proj \colon \R^d\setminus\{y\} \to \partial B(y,4R)$ for which
  \[
    \proj(x) = y+4R\frac{x-y}{|x-y|}.
  \]
  It is evident that $\proj$ is $1$-Lipschitz in $\R^n\setminus B(y,4R)$ (and thus especially in $\partial E_r \cap B(y,8R)$) and, furthermore, the image $\proj(\partial E_r \cap B(y,8R))$ contains all $x\in \partial B(y,4R)$ for which the angle between the line segments $[y,x]$ and $[y,w]$ is less than $\alpha_0$. It follows that
  \[
    \HH^{d-1}(\partial E_r \cap 8B) \ge \HH^{d-1}(\proj(\partial E_r\cap B(y,8R))) \ge c R^{d-1} \ge cr^{d-1},
  \]
  where $c$ only depends on the dimension $d$.
\end{proof}

We can now prove local quantitative estimates for the $(d{-}1)$-measures of $r$-boundaries;
notice that the global estimates of \eqref{eq:area and balls} are special cases of these local results, applied to sufficiently large balls.

\begin{proposition}\label{prop:local minkowski tube}
  Let $E \subset \R^d$ be a closed set, and let $B_0$ be a closed ball
  centered at $E$.
  If $k \in \Z$ and $2^{-k-1} < r \le 2^{-k}$, then
  \begin{equation*}
    \HH^{d-1}(\partial E_r\cap B_0) \le C r^{d-1}\sum_{j=k+2}^{k+4} \#\WW_j(\R^d \setminus E;B_0),
  \end{equation*}
  where $C \ge 1$ depends only on $d$.
\end{proposition}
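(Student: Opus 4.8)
The plan is to cover $\partial E_r \cap B_0$ by Whitney balls from a few generations near the scale $r$, and then apply the pointwise bound of Lemma~\ref{lemma:tube in cube} to each such ball. First I would observe that since $\WW(\R^d\setminus E)$ is a cover of $\R^d\setminus E$ and $\partial E_r \subset \R^d\setminus E$ (because every point of $\partial E_r$ has distance exactly $r>0$ from $E$), every point $x\in\partial E_r\cap B_0$ lies in some Whitney ball $B=B(z,R')\in\WW(\R^d\setminus E)$. The key geometric point is to control the radius $R'$: if $x\in B(z,R')$ then $\dist(x,E)\le |x-z|+\dist(z,E) = R' + 8R' = 9R'$ and $\dist(x,E)\ge \dist(z,E)-|x-z| = 8R'-R' = 7R'$, so since $\dist(x,E)=r$ we get $r/9 \le R' \le r/7$. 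Translating this into the dyadic indexing $2^{-j-1}<R'\le 2^{-j}$ and using $2^{-k-1}<r\le 2^{-k}$, a short computation shows that $j$ must lie in $\{k+2,k+3,k+4\}$ (one checks $r/7 \le 2^{-k}/7 < 2^{-(k+2)}$ forces $j\ge k+3$... more carefully, $R'\le r/7\le 2^{-k}/7$ and $R'\ge r/9 > 2^{-k-1}/9$, and matching these against $2^{-j-1}<R'\le 2^{-j}$ pins $j$ to at most three consecutive values; the precise range $k+2\le j\le k+4$ comes out of this). Moreover such a ball $B$ meets $B_0$ (it contains the point $x\in B_0$), so $B\in\WW_j(\R^d\setminus E;B_0)$.

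Next I would write $\partial E_r\cap B_0 \subset \bigcup_{j=k+2}^{k+4}\bigcup_{B\in\WW_j(\R^d\setminus E;B_0)} B$ and use countable subadditivity of $\HH^{d-1}$ together with Lemma~\ref{lemma:tube in cube}, which gives $\HH^{d-1}(\partial E_r\cap B)\le C\,2^{-j(d-1)}$ for each $B\in\WW_j(\R^d\setminus E)$. This yields
\begin{equation*}
  \HH^{d-1}(\partial E_r\cap B_0) \le C\sum_{j=k+2}^{k+4} 2^{-j(d-1)}\,\#\WW_j(\R^d\setminus E;B_0).
\end{equation*}
Finally, since $j\le k+4$ implies $2^{-j(d-1)}\le 2^{-k(d-1)}\cdot 2^{-2(d-1)}\le 16^{d-1} r^{d-1}$ (using $r>2^{-k-1}$, so $2^{-k(d-1)}<2^{d-1}r^{d-1}$, hence $2^{-j(d-1)}\le 2^{-(k+2)(d-1)}\le 2^{-(d-1)}r^{d-1}\le r^{d-1}$), each term is bounded by $Cr^{d-1}\#\WW_j(\R^d\setminus E;B_0)$ with a constant depending only on $d$, and summing over the three values of $j$ gives the claimed estimate.

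The only real subtlety is the bookkeeping that determines exactly which dyadic generations $\WW_j$ can contain a Whitney ball meeting $\partial E_r$, i.e.\ verifying that $k+2\le j\le k+4$ is the correct range given the factor $\tfrac18$ in the definition of the Whitney cover and the relation $2^{-k-1}<r\le 2^{-k}$. This is elementary but must be done carefully; everything else is subadditivity of Hausdorff measure plus the already-established Lemma~\ref{lemma:tube in cube}. There is no serious obstacle here — the proposition is essentially a repackaging of Lemma~\ref{lemma:tube in cube} via a covering argument, and the constant absorbs the comparisons between $r$ and the dyadic scales $2^{-j}$.
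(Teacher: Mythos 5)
Your proposal is correct and follows essentially the same route as the paper: pin down, via $7R'\le\dist(x,E)=r\le 9R'$ and the dyadic relations $2^{-j-1}<R'\le 2^{-j}$, $2^{-k-1}<r\le 2^{-k}$, that any Whitney ball meeting $\partial E_r\cap B_0$ lies in a generation $j$ with $k+2\le j\le k+4$, then sum Lemma~\ref{lemma:tube in cube} over those balls. The two small index slips in your bookkeeping (the parenthetical ``forces $j\ge k+3$'' and writing ``$j\le k+4$'' where $j\ge k+2$ is what is used) are immaterial, since your subsequent chains give the correct range and the constant absorbs the dyadic comparisons.
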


\begin{proof}
Fix $r>0$ and let $k\in\Z$ be as above.
If $B \in \WW_j(\R^d \setminus E;B_0)$ is so that $\bdry E_r\cap B \neq \emptyset$, then $2^{-j-1} < r_0 \le r/7 < 2^{-k-2}$ and $2^{-k-5} < r/9 \le r_0 \le 2^{-j}$ where $r_0>0$ is the radius of $B$. Thus
\[
  \bdry E_r\cap B_0\sub\bigcup_{j=k+2}^{k+4} \WW_j(\R^d \setminus E;B_0)
\]
and, consequently, by Lemma \ref{lemma:tube in cube},
\[
  \Ha^{d-1}(\bdry E_r\cap B_0)\leq \sum_{j=k+2}^{k+4} \sum_{B \in \WW_j(\R^d \setminus E;B_0)} \Ha^{d-1}(\bdry E_r\cap B)
  \leq C\sum_{j=k+2}^{k+4} \#\WW_j(\R^d \setminus E;B_0) 2^{-j(d-1)}.
\]
This proves the claim.
\end{proof}

\begin{proposition}\label{prop:low bound for d-1}
  Let $E \subset \R^d$ be a closed set, and let $B_0$ be a closed ball
  centered at $E$.
  If $k \in \N$, and $2^{-k-1}<r\leq 2^{-k}$, then
  \[
    \Ha^{d-1}(\bdry E_r\cap 3B_0)\geq c r^{d-1}\#\WW_k(\R^d\setminus E;B_0),
  \]
  where $c>0$ depends only on $d$.
\end{proposition}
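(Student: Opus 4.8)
The plan is to prove the estimate by adding up the local lower bound of Lemma~\ref{lemma:low bound for d-1 in W-balls} over a suitable essentially disjoint subfamily of the Whitney balls that meet $B_0$. Write $B_0=B(z_0,R_0)$ with $z_0\in E$, and put $\BB=\WW_k(\R^d\setminus E;B_0)$. If $\BB=\emptyset$ the right-hand side is zero and there is nothing to prove, so assume $\BB\neq\emptyset$. The first point is the geometric observation that $8B\subset 3B_0$ for every $B=B(x,r_B)\in\BB$. Indeed, since $B\cap B_0\neq\emptyset$ we have $|x-z_0|\le r_B+R_0$; on the other hand $z_0\in E$ and, by the definition of a Whitney cover, $\dist(x,E)=8r_B$, so $8r_B=\dist(x,E)\le|x-z_0|\le r_B+R_0$, i.e.\ $r_B\le R_0/7$. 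Hence for $q\in 8B$ we get $|q-z_0|\le 8r_B+|x-z_0|\le 9r_B+R_0\le\tfrac{16}{7}R_0<3R_0$, as claimed. In particular $\BB\neq\emptyset$ forces $R_0>7\cdot2^{-k-1}$, so the case of a ball $B_0$ with no assumed relation to $2^{-k}$ causes no trouble.

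Next I would carry out the combinatorial step: there is $\BB'\subset\BB$ with the balls $\{8B:B\in\BB'\}$ pairwise disjoint and $\#\BB\le M\#\BB'$, where $M$ depends only on $d$. Choose $\BB'$ maximal among subcollections of $\BB$ whose enlargements $\{8B\}$ are pairwise disjoint. By maximality, every $B\in\BB$ satisfies $8B\cap 8B'\neq\emptyset$ for some $B'\in\BB'$. Fix $B'=B(x',r')\in\BB'$; if $B=B(x,r_B)\in\BB$ has $8B\cap 8B'\neq\emptyset$, then $|x-x'|\le 8r_B+8r'\le 16\cdot2^{-k}$, so $B\subset B(x',17\cdot2^{-k})$, while $B\supset B(x,2^{-k-1})$ because $r_B>2^{-k-1}$. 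Since the Whitney balls have overlap bounded by a dimensional constant, integrating the overlap function $\sum_B\Char{B}$ over $B(x',17\cdot2^{-k})$ and comparing Lebesgue measures bounds the number of such $B$ by some $M=M(d)$. Therefore $\#\BB\le M\#\BB'$.

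Finally, using that the balls $8B'$, $B'\in\BB'$, are pairwise disjoint and all contained in $3B_0$, together with Lemma~\ref{lemma:low bound for d-1 in W-balls} (which applies since $\BB'\subset\WW_k(\R^d\setminus E)$ and $2^{-k-1}<r\le2^{-k}$), I conclude
\[
  \HH^{d-1}(\partial E_r\cap 3B_0)\ge\sum_{B'\in\BB'}\HH^{d-1}(\partial E_r\cap 8B')\ge c_1r^{d-1}\#\BB'\ge\frac{c_1}{M}\,r^{d-1}\,\#\BB,
\]
where $c_1=c_1(d)>0$ is the constant of Lemma~\ref{lemma:low bound for d-1 in W-balls}; taking $c=c_1/M$ gives the claim. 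I expect the combinatorial step to be the only real obstacle: Lemma~\ref{lemma:low bound for d-1 in W-balls} only provides a lower bound for $\partial E_r$ inside the enlarged balls $8B$, which overlap, so one must replace $\{8B\}_{B\in\BB}$ by a disjoint subfamily at the cost of a dimensional constant, exactly the device already used in Lemmas~\ref{lemma:general bound} and~\ref{lemma:general low bound}. The inclusion $8B\subset 3B_0$ is routine but indispensable, and is the reason the statement is phrased with $3B_0$ rather than $B_0$.
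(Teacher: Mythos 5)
Your proof is correct and follows essentially the same route as the paper: the key inputs are Lemma~\ref{lemma:low bound for d-1 in W-balls}, the inclusion $8B\subset 3B_0$ for $B\in\WW_k(\R^d\setminus E;B_0)$, and a volume/overlap argument to sum the local lower bounds. The only cosmetic difference is that you pass to a maximal disjoint subfamily of the balls $8B$, whereas the paper invokes directly the uniformly bounded overlap of these balls; both are the same ``simple volume argument'' and yield the same constant dependence.
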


\begin{proof}
By the properties of the Whitney covers and a simple volume argument,
the overlap of the balls $8B$, for $B\in\WW_k(\R^d\setminus E; B_0)$, is uniformly bounded
by a constant $C_1\geq 1$. Moreover, we have for these balls that $8B\sub 3B_0$.
Thus Lemma \ref{lemma:low bound for d-1 in W-balls} yields that
\[
 \Ha^{d-1}(\bdry E_r\cap 3B_0) \geq C_1^{-1} \sum_{B\in\WW_k(\R^d\setminus E;B_0)}\HH^{d-1}(\partial E_r \cap 8B)\geq C_1^{-1}cr^{d-1}\#\WW_k(\R^d\setminus E;B_0),
\]
as desired.
\end{proof}

We record the following consequences of Propositions \ref{prop:local minkowski tube} and \ref{prop:low bound for d-1} for the global behavior of $\partial E_r$.

\begin{proposition}\label{prop:minkowski tube}
  (1) If $E \subset \R^d$ is compact and $\lambda\geq 0$,
  then
  \begin{equation*}
    \HH^{d-1}(\partial E_r)
     \le C r^{d-1-\lambda} \Mi_r^\lambda(E)
  \end{equation*}
  for all $r>0$, where $C \ge 1$ depends only on $d$.

  (2) If $E \subset \R^d$ is compact and $\roo$-porous, and $\lambda\geq 0$, then there is $c>0$ depending only on $d$, $\lambda$, and $\roo$ so that
  \[
    \Ha^{d-1}(\bdry E_r)\geq c r^{d-1-\lambda}\Mi^{\lambda}_{10r/\roo}(E)
  \]
  for all $0<r<\roo\diam(E)/5$.
\end{proposition}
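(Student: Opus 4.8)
The plan is to deduce both inequalities from the local $(d{-}1)$-measure estimates of Propositions~\ref{prop:local minkowski tube} and~\ref{prop:low bound for d-1} (which, applied to a sufficiently large ball $B_0$, yield the global bounds~\eqref{eq:area and balls}), combined with the general Whitney ball count estimates of Lemmas~\ref{lemma:general bound} and~\ref{lemma:general low bound}. Throughout fix $r>0$, let $k\in\Z$ be determined by $2^{-k-1}<r\le2^{-k}$, so that $2^{-j}\in[r/16,r/2)$ for $j\in\{k+2,k+3,k+4\}$, and fix $x_0\in E$; the degenerate case $\diam(E)=0$ of~(1) is trivial and~(2) is then vacuous, so one may assume $\diam(E)>0$.

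For~(1), I would take $B_0=B(x_0,\diam(E)+2r)$, so that $\partial E_r\subset B_0$ and $E\subset 2B_0$. Proposition~\ref{prop:local minkowski tube} then gives
\[
  \HH^{d-1}(\partial E_r)=\HH^{d-1}(\partial E_r\cap B_0)\le C_1\,r^{d-1}\sum_{j=k+2}^{k+4}\#\WW_j(\R^d\setminus E;B_0).
\]
Next I pick a cover $\{B(w_i,r)\}_{i=1}^n$ of $E$ with $w_i\in E$ that realizes the Minkowski content, i.e.\ $nr^\lambda=\MM^\lambda_r(E)$. Since $\tfrac1{16}r\le2^{-j}\le r$ for each $j\in\{k+2,k+3,k+4\}$, Lemma~\ref{lemma:general bound} applied with $\delta=\tfrac1{16}$ gives $\#\WW_j(\R^d\setminus E;B_0)\le C_2 n$ with $C_2=C_2(d)$. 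Combining the two displays yields $\HH^{d-1}(\partial E_r)\le 3C_1C_2\,r^{d-1}n=3C_1C_2\,r^{d-1-\lambda}\MM^\lambda_r(E)$, which is~(1).

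For~(2), Proposition~\ref{prop:low bound for d-1} applied with $B_0=B(x_0,R)$ for $R$ large enough that $\WW_k(\R^d\setminus E;B_0)=\WW_k(\R^d\setminus E)$ (say $R=\diam(E)+9\cdot2^{-k}$) gives $\HH^{d-1}(\partial E_r)\ge c_1\,r^{d-1}\#\WW_k(\R^d\setminus E)$, so it suffices to show that $\#\WW_k(\R^d\setminus E)\ge c_2\,\MM^\lambda_{10r/\roo}(E)\,(10r/\roo)^{-\lambda}$ with $c_2=c_2(d)$. If $\roo\diam(E)/10\le r<\roo\diam(E)/5$ this is immediate: then $\MM^\lambda_{10r/\roo}(E)=(10r/\roo)^\lambda$ because one ball covers $E$, while $\#\WW_k(\R^d\setminus E)\ge1$. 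If $r<\roo\diam(E)/10$, I would take a maximal $(5r/\roo)$-packing $\{B(p_i,5r/\roo)\}_{i=1}^N$ of $E$; doubling it to a cover of $E$ by $(10r/\roo)$-balls shows $N\ge\MM^\lambda_{10r/\roo}(E)(10r/\roo)^{-\lambda}$. For each $i$, porosity of $E$ at scale $10r/\roo<\diam(E)$ about $p_i$ produces $y_i\in B(p_i,10r/\roo)$ with $\dist(y_i,E)\ge 10r>5\cdot2^{-k}$; since $x\mapsto\dist(x,E)$ runs continuously from $0$ at $p_i$ to at least $10r$ at $y_i$ along the segment $[y_i,p_i]\subset B(p_i,10r/\roo)$, there is a point $x_i$ on it with $\dist(x_i,E)=5\cdot2^{-k}$. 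The Whitney ball $B_i\in\WW(\R^d\setminus E)$ containing $x_i$ then has radius in $[\tfrac59 2^{-k},\tfrac57 2^{-k}]\subset(2^{-k-1},2^{-k}]$, hence $B_i\in\WW_k(\R^d\setminus E)$, and $B_i\subset B(p_i,14r/\roo)$. Because the $B(p_i,5r/\roo)$ are pairwise disjoint, a volume count shows each Whitney ball equals $B_i$ for at most $M=M(d)$ indices $i$, so $\#\WW_k(\R^d\setminus E)\ge N/M$, which is the required bound. Chaining this with the estimate from Proposition~\ref{prop:low bound for d-1} and absorbing $(10/\roo)^{-\lambda}$ into the constant yields~(2).

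The genuinely delicate point is the scale bookkeeping in~(2): the packing must be taken at scale $5r/\roo$ and porosity applied at scale $10r/\roo$ exactly so that the auxiliary point lies at distance $5\cdot2^{-k}$ from $E$, which forces its Whitney ball into precisely the generation $\WW_k$ that Proposition~\ref{prop:low bound for d-1} couples with $r$ --- a point in a neighbouring generation would be invisible to that proposition. This is essentially the argument behind Lemma~\ref{lemma:general low bound}, but it must be run with a maximal packing of all of $E$ rather than of $E\cap\tfrac12B_0$, so one cannot quote that lemma verbatim, and the short range $r\in[\roo\diam(E)/10,\roo\diam(E)/5)$, where porosity at scale $10r/\roo$ is no longer available, must be handled separately as above. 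Part~(1) and the two chains of estimates are otherwise routine.
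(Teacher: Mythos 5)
Your argument is correct and follows essentially the same route as the paper: part (1) is exactly Proposition \ref{prop:local minkowski tube} combined with Lemma \ref{lemma:general bound}, and part (2) is Proposition \ref{prop:low bound for d-1} combined with the packing-plus-porosity argument underlying Lemma \ref{lemma:general low bound}. The paper simply cites Lemma \ref{lemma:general low bound} for a maximal $(5r/\roo)$-packing of all of $E$ (glossing over the facts that the lemma is stated for a packing of $E\cap\tfrac12 B_0$ and for scales below $R/2q$), so your re-derivation of that step, including the separate treatment of the range $\roo\diam(E)/10\le r<\roo\diam(E)/5$, is a more careful rendering of the same idea rather than a different approach.
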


\begin{proof}
(1) Fix $r>0$, let $k \in \Z$ be such that $2^{-(k+1)} < r \le 2^{-k}$, and take $B_0=B(x_0,\diam(E)+r)$ with any fixed $x_0\in E$. Since for $k+2\leq j \leq k+4$ we have $2^{-4}r\leq 2^{-(k+4)}\leq 2^{-j}\leq 2^{-(k+2)}<r$,
Lemma~\ref{lemma:general bound} implies that $\#\WW_j(\R^d \setminus E;B_0)\leq C r^{-\lambda} \Mi_r^\lambda(E)$. The claim now follows from Proposition~\ref{prop:local minkowski tube}, since $\partial E_r=\partial E_r\cap B_0$.

(2) Fix $0<r<\roo\diam(E)/5$, let $k\in\Z$ be such that $2^{-k-1}\leq r<2^{-k}$, and take $B_0=B(x_0,\diam(E))$ with any fixed $x_0\in E$. Let $\{B_j\}_{j=1}^n$ be a maximal $(5r/\roo)$-packing of $E$.
Then, by Lemma~\ref{lemma:general low bound},
$\#\WW_k(\R^d\setminus E;B_0)\geq cn\geq c(10r/\roo)^{-\lambda}\Mi^{\lambda}_{10r/\roo}(E)$,
and thus, by Proposition~\ref{prop:low bound for d-1},
\[
\Ha^{d-1}(\bdry E_r)=\Ha^{d-1}(\bdry E_r\cap 3B_0)
 \geq c_1 r^{d-1-\lambda}\Mi^{\lambda}_{10r/\roo}(E),
\]
where $c_1>0$ depends only on $d$, $\lambda$, and $\roo$.
\end{proof}

Notice that the estimates in \eqref{eq:OP} are easy consequences of Proposition~\ref{prop:minkowski tube}(1). Indeed, if $E\sub\R^d$ is compact, then for $r>\diam(E)$ we have $\Mi^0_r(E)\leq C$, and for $0<r\leq \diam(E)$ that $\Mi^d_r(E)\leq C\diam(E)^d$.

If $0<s<d$ and $E\sub \R^d$ is an $s$-regular compact set, then we can
combine the two cases of Proposition \ref{prop:minkowski tube} into the following corollary; recall that such a set $E$ is necessarily porous, and that $c\leq \Mi^s_r(E)\leq C$ for all $0<r<\diam(E)$. 

\begin{corollary}\label{coro:saannollisen pullistus}
If $E \subset \R^d$ is compact and $s$-regular for $0<s<d$,
then there are $C \ge 1$, $c>0$, and $r_0>0$ so that $cr^{d-1-s} \le \Ha^{d-1}(\bdry E_r) \le Cr^{d-1-s}$ for all $0<r<r_0$.
\end{corollary}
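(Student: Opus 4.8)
The plan is to deduce Corollary~\ref{coro:saannollisen pullistus} directly from the two parts of Proposition~\ref{prop:minkowski tube}, using the standard fact that an $s$-regular set is porous and that its Minkowski $\rho$-content of exponent $s$ is bounded above and below by positive constants independent of $\rho$. The only work is to pin down the admissible ranges of $r$ so that both estimates apply simultaneously.

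First I would record the two ingredients about $s$-regular compact sets $E\sub\R^d$ with $0<s<d$: (i) $E$ is $\roo$-porous for some $\roo=\roo(E)>0$ (this is stated in the text, following from \cite[Theorem~5.3]{JarvenpaaJarvenpaaKaenmakiRajalaRogovinSuomala2007} and \cite[Theorem~1.1]{Kaenmaki2007}, the point being $s<d$ so $E$ has empty interior); and (ii) there are constants $0<c_0\le C_0$ with $c_0\le\Mi^s_\rho(E)\le C_0$ for all $0<\rho<\diam(E)$, which is immediate from $s$-regularity by comparing a maximal $\rho$-packing with a $\rho$-cover and using $c^{-1}\rho^s\le\mu(B(x,\rho))\le c\rho^s$.

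Next I would apply Proposition~\ref{prop:minkowski tube}(1) with $\lambda=s$: for all $r>0$ (in particular for $0<r<\diam(E)$, where $\Mi^s_r(E)\le C_0$), this gives
\[
  \HH^{d-1}(\bdry E_r)\le Cr^{d-1-s}\Mi^s_r(E)\le CC_0\,r^{d-1-s},
\]
which is the upper bound with $C_1:=CC_0$. For the lower bound I would apply Proposition~\ref{prop:minkowski tube}(2) with $\lambda=s$ and the porosity constant $\roo$ from (i): for all $0<r<\roo\diam(E)/5$,
\[
  \HH^{d-1}(\bdry E_r)\ge c\,r^{d-1-s}\Mi^s_{10r/\roo}(E)\ge c\,c_0\,r^{d-1-s},
\]
provided $10r/\roo<\diam(E)$, i.e.\ $r<\roo\diam(E)/10$, which is already implied by $r<\roo\diam(E)/5$ — so both the applicability condition of Proposition~\ref{prop:minkowski tube}(2) and the bound $\Mi^s_{10r/\roo}(E)\ge c_0$ hold on the same range. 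Setting $r_0:=\roo\diam(E)/5$ and $c_1:=c\,c_0$ then yields $c_1r^{d-1-s}\le\HH^{d-1}(\bdry E_r)\le C_1r^{d-1-s}$ for all $0<r<r_0$, which is the claim.

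There is no genuine obstacle here; the statement is essentially a packaging of the two halves of Proposition~\ref{prop:minkowski tube} in the $s$-regular case. The one point requiring a little care is bookkeeping the threshold $r_0$: the upper estimate is valid for all $r<\diam(E)$ while the lower estimate needs $r<\roo\diam(E)/5$, so the final $r_0$ must be taken as the smaller of these (namely $\roo\diam(E)/5$), and one should note that the dependence of $\roo$ on $E$ is what forces $r_0$, $c$, and $C$ in the conclusion to depend on $E$ and not only on $d$ and $s$.
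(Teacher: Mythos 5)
Your argument is exactly the paper's: the corollary is obtained by combining Proposition~\ref{prop:minkowski tube}(1) and (2) with $\lambda=s$, using that an $s$-regular compact set with $s<d$ is porous and that its Minkowski $s$-content is bounded between positive constants for radii below $\diam(E)$. The only slip is one reversed implication: $r<\varrho\diam(E)/5$ does \emph{not} give $10r/\varrho<\diam(E)$ (that requires $r<\varrho\diam(E)/10$), so either take $r_0=\varrho\diam(E)/10$ or observe that $\MM^{s}_{\rho}(E)\ge\rho^{s}\ge\diam(E)^{s}$ trivially when $\rho\ge\diam(E)$; since the statement only asks for some $r_0>0$, this is harmless.
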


Corollary~\ref{coro:saannollisen pullistus} generalizes the claims concerning the spherical dimension in \cite[Theorem~4.5]{RatajWinter2010}. We thank Steffen Winter for pointing out, after the completion of the first version of this paper, that the claim in Corollary~\ref{coro:saannollisen pullistus} can also be deduced from \cite[Theorem~2.2]{RatajWinter2012}. For a concrete example, see \cite[Example~3.3]{RatajWinter2010}.

Propositions~\ref{prop:local minkowski tube} and \ref{prop:low bound for d-1} can also be used together with the results from Section \ref{sect:whitney} to show the correspondence between Assouad dimensions and local estimates for the $r$-boundaries.

\begin{corollary}\label{coro:assouad tube}
Let $E \subset \R^d$ be a closed set.

(1)  If  $\udima(E) < \lambda$,
then there exists $C\geq 1$ so that
  \[
    \Ha^{d-1}(\bdry E_r\cap B_0)\leq C r^{d-1}\Bigl( \frac{r}{R} \Bigr)^{-\lambda}
  \]
  for all closed balls $B_0$ of radius $0<R<\diam(E)$ centered at $E$ and for every $0<r<R$.

(2)  If there exists $c>0$ and $0<\delta\leq 1$ so that
  \[
    \Ha^{d-1}(\bdry E_r\cap B_0) \geq c r^{d-1}\Bigl( \frac{r}{R} \Bigr)^{-\lambda}
  \]
  for all closed balls $B_0$ of radius $0<R<\diam(E)$ centered at $E$ and for every $0<r<\delta R$, then $\ldima(E)\geq \lambda$.

(3)  If $\HH^{d}(E)=0$ and there exists $C \ge 1$ so that
  \begin{equation*}
    \HH^{d-1}(\partial E_r\cap B_0) \le Cr^{d-1}\Bigl( \frac{r}{R} \Bigr)^{-\lambda}
  \end{equation*}
  for all closed balls $B_0$ of radius $0<R<\diam(E)$ centered at $E$ and for every $0<r<R$, then $\udima(E) \le \lambda$.

(4) If $E$ is porous and $\ldima(E) > \lambda$, then there exists $c>0$ and $0<\delta\leq 1$ so that
  \[
   \Ha^{d-1}(\bdry E_r\cap B_0)\geq cr^{d-1}\Bigl( \frac{r}{R} \Bigr)^{-\lambda}
  \]
  for all closed balls $B_0$ of radius $0<R<\diam(E)$ centered at $E$ and for every $0<r<\delta R$.
\end{corollary}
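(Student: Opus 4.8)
The plan is to treat this as a bridge between the two Whitney-ball estimates established in Propositions \ref{prop:local minkowski tube} and \ref{prop:low bound for d-1} and the purely metric results of Section \ref{sect:whitney}, so that in each of the four parts one side of the chain
\[
  \#\WW_k(\R^d\setminus E; B_0)\ \lesssim\ r^{-(d-1)}\HH^{d-1}(\partial E_r\cap C B_0)\ \lesssim\ \sum_{j=k+2}^{k+4}\#\WW_j(\R^d\setminus E; B_0)
\]
(valid for $2^{-k-1}<r\le 2^{-k}$, with $C=3$ on the left by Proposition \ref{prop:low bound for d-1} and $C=1$ on the right by Proposition \ref{prop:local minkowski tube}) is combined with the appropriate lemma from Section \ref{sect:whitney}. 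I would first fix notation: throughout let $B_0$ be a closed ball of radius $R<\diam(E)$ centred at $E$, and given $0<r<R$ let $k=k(r)\in\Z$ be determined by $2^{-k-1}<r\le 2^{-k}$, so that $2^{-\lambda k}$ and $r^\lambda$ are comparable with constants depending only on $\lambda$.

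For part (1), from $\udima(E)<\lambda$ and Lemma \ref{lemma:up bound from dim}(1) we get $\#\WW_j(\R^d\setminus E;B_0)\le C2^{\lambda j}R^\lambda$ for all $j>-\log_2 R$; summing this over $j=k+2,\dots,k+4$ and feeding it into Proposition \ref{prop:local minkowski tube} gives $\HH^{d-1}(\partial E_r\cap B_0)\le C' r^{d-1}2^{\lambda k}R^\lambda\approx C'' r^{d-1}(r/R)^{-\lambda}$ for all $0<r<R$ (for $r$ comparable to $R$ one absorbs the finitely many scales into the constant, exactly as in the proofs in Section \ref{sect:whitney}). Part (3) is identical except that one invokes Corollary \ref{coro:regular mu0 bounds}(1) in place of Lemma \ref{lemma:up bound from dim}(1): the hypothesis becomes that $\HH^d(E)=0$ and the displayed surface-area bound holds, which by Proposition \ref{prop:low bound for d-1} forces $\#\WW_k(\R^d\setminus E;B_0)\le C r^{-(d-1)}\HH^{d-1}(\partial E_r\cap 3B_0)\le C' 2^{\lambda k}R^\lambda$ for all $k\ge -\log_2 R$ (after enlarging $B_0$ by the harmless factor $3$, using that $3B_0$ is again a ball centred at $E$ of comparable radius), and then $\udima(E)\le\lambda$ follows from Corollary \ref{coro:regular mu0 bounds}(1) applied with the Lebesgue measure, which is $d$-regular.

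For parts (2) and (4) one runs the same two estimates in the opposite order. For (2): the surface-area lower bound together with Proposition \ref{prop:local minkowski tube} gives $\sum_{j=k+2}^{k+4}\#\WW_j(\R^d\setminus E;B_0)\ge c'\, 2^{\lambda k}R^\lambda$ for all $0<r<\delta R$, hence by the pigeonhole principle at least one of the three generations $j\in\{k+2,k+3,k+4\}$ satisfies $\#\WW_j\ge (c'/3)2^{\lambda k}R^\lambda\approx 2^{\lambda j}R^\lambda$; since as $r$ ranges over $(0,\delta R)$ the index $k+2$ ranges over all sufficiently large integers, this yields $\#\WW_j(\R^d\setminus E;B_0)\ge c'' 2^{\lambda j}R^\lambda$ for all $j$ in some residue-independent cofinal set, and then Lemma \ref{lemma:low bound for dim}(1) — or rather its obvious three-generations variant, since only infinitely many scales are needed for a lower-dimension conclusion — gives $\ldima(E)\ge\lambda$. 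For (4): porosity plus $\ldima(E)>\lambda$ and Lemma \ref{lemma:low bound from dim}(1) give $\#\WW_k(\R^d\setminus E;B_0)\ge c\,2^{\lambda k}R^\lambda$ for all $k>-\log_2 R+\ell$; plugging this into Proposition \ref{prop:low bound for d-1} produces $\HH^{d-1}(\partial E_r\cap 3B_0)\ge c'\, r^{d-1}2^{\lambda k}R^\lambda\approx c'' r^{d-1}(r/R)^{-\lambda}$ for $0<r<2^{-\ell}R=:\delta R$, and renaming $3B_0$ as the ball in the statement (legitimate since one may start from the ball $\tfrac13 B_0$) finishes the proof.

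The only genuinely delicate point is the bookkeeping of the constants $\delta$ and of the ball dilations: Proposition \ref{prop:low bound for d-1} outputs information about $\partial E_r\cap 3B_0$ rather than $\partial E_r\cap B_0$, so in parts (2) and (4) one must be careful that the hypothesis/conclusion is phrased for the same ball — this is handled, as elsewhere in the paper, by applying the proposition to the concentric ball of one-third the radius and noting that it is still centred at $E$ with radius comparable to $R$, at the cost of a worse $\delta$. I do not expect any other serious obstacle: everything else is a routine transcription of the arguments already used for Theorems \ref{thm:porous char of dimm} and \ref{thm:mu=0 char of dimm}, with $r^{-(d-1)}\HH^{d-1}(\partial E_r\cap B_0)$ playing exactly the role that $\#\WW_k(\R^d\setminus E;B_0)$ plays there.
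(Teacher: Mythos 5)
Your argument is correct and follows essentially the same route as the paper's proof: (1) is Lemma~\ref{lemma:up bound from dim}(1) plus Proposition~\ref{prop:local minkowski tube}, (2) is Proposition~\ref{prop:local minkowski tube} plus the three-consecutive-generations modification of Lemma~\ref{lemma:low bound for dim}(1), (3) is Proposition~\ref{prop:low bound for d-1} plus Corollary~\ref{coro:regular mu0 bounds}(1) applied to Lebesgue measure, and (4) is Lemma~\ref{lemma:low bound from dim}(1) plus Proposition~\ref{prop:low bound for d-1}, with the $3B_0$ versus $\tfrac13 B_0$ bookkeeping that the paper leaves implicit. One small caveat: your parenthetical that ``only infinitely many scales are needed for a lower-dimension conclusion'' is not accurate for the lower Assouad dimension (the covering bound is required at all scales, and a cofinal set with unbounded gaps would not suffice), but what your pigeonhole step actually produces is a good generation in every window of three consecutive scales, i.e.\ exactly the bounded-gap input that the modified Lemma~\ref{lemma:low bound for dim}(1) needs, so the argument is sound as written.
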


\begin{proof}
The claim (1) follows directly from Lemma~\ref{lemma:up bound from dim}(1) and Proposition~\ref{prop:local minkowski tube}.
For (2) we need Proposition~\ref{prop:local minkowski tube} together with a slight modification of Lemma~\ref{lemma:low bound for dim}(1), where the assumption allows a sum over a fixed number of consecutive generations of Whitney balls, but the conclusion stays the same. The claim (3) follows from Corollary~\ref{coro:regular mu0 bounds}(1) and Proposition~\ref{prop:low bound for d-1}, and (4) from Lemma~\ref{lemma:low bound from dim}(1) and Proposition~\ref{prop:low bound for d-1}.
\end{proof}

\begin{remark}\label{rmk:converse}
Global versions of the first three cases of Corollary \ref{coro:assouad tube} for compact sets, where Minkowski dimensions are used instead of Assouad dimensions and references to $B_0$ and $R$ are omitted, can be found to be implicit in \cite{RatajWinter2010}; the corresponding version of (1) follows from \cite[Lemma~3.5]{RatajWinter2010}, and versions of (2) and (3) from \cite[Corollary~3.2]{RatajWinter2010}.

There is also a global version of (4), even without the porosity assumption but then with a larger exponent. Namely, it follows from \cite[Proposition 3.7]{RatajWinter2010} that $\ldimm(E)>\lambda$ implies
\[
\Ha^{d-1}(\bdry E_r)\geq c r^{d-1-\lambda\frac{d-1}{d}}
\]
for small $r>0$. This estimate is of course related to the lower bound for spherical dimension in~\eqref{eq:lower sdim}.
The proof of \cite[Proposition 3.7]{RatajWinter2010} is based on the use of the isoperimetric inequality.
However, we now know by Proposition \ref{prop:minkowski tube}(2), that the global analog of (4) holds with the exponent
$d-1-\lambda$ for all porous sets.
\end{remark}

We finish the article by giving the example showing Proposition \ref{prop:end-point}:

\begin{ex}\label{ex:thick cantor}
In this example we exhibit a set $E \subset \R^2$ with $\HH^2(E)=0$ and $\dimh(E) = \ldimm(E) = 2$, but $\ldims(E) = 1$. Besides
proving Proposition \ref{prop:end-point}, and thus giving a positive answer to the first question of Winter in \cite[Remark 2.4]{Winter2011}, the construction shows that Lemma \ref{lemma:general low bound} does not necessarily hold without the porosity assumption, and moreover, that the existence of the Minkowski dimension does not guarantee the equivalence of the lower Minkowski and lower spherical dimensions; the last point was also asked in \cite[Remark 2.4]{Winter2011}. We remark that
this example can be easily generalized to all $\R^d$, $d\geq 1$, with dimensions $\ldimm(E) = d$ and $\ldims(E) = d-1$.

If $Q \subset \R^n$ is a cube, then $\ell(Q)$ is its side-length.
We begin the construction by introducing the following $\lambda$-operation:
\begin{itemize}
  \item[($\lambda$)] If $\QQ$ is any collection of rectangles, then we form a new collection by replacing each $Q \in \QQ$ by four rectangles of side-length $\lambda\ell(Q)$ placed in the corners of $Q$.
\end{itemize}
Let $\Lambda = (\lambda_j)_{j=1}^\infty$, where $\lambda_j = \tfrac12$ for all odd $j$ and $\tfrac14 \le \lambda_j = (\tfrac12)^{1+1/j} < \tfrac12$ for all even $j$. Furthermore, let $S = (s_j)_{j=1}^\infty$ be a sequence of real numbers so that $s_j > 1$ for all $j \in \N$ and $\lim_{j \to \infty}s_j = 1$ and let $(n_j)_{j=1}^\infty$ be a sequence of natural numbers so that $n_{j+1}$ is much bigger than $\sum_{i=1}^j n_i$. We will indicate how to make the precise choice for $n_{j+1}$ during the construction.  Set $\QQ_0 = \{ [0,1]^2 \}$ and for each $j \in \N$ construct $\QQ_j$ recursively from $\QQ_{j-1}$ by applying the $\lambda_j$-operation $n_j$ times. Observe that $\bigcup_{Q \in \QQ_j} Q = \bigcup_{Q \in \QQ_{j-1}} Q$, but $\#\QQ_j = 4^{n_j}\#\QQ_{j-1}$ for all odd $j$. Define $E = \bigcap_{j=1}^\infty \bigcup_{Q \in \QQ_j} Q$.

We define a probability measure $\mu$ on $E$ by dividing the mass of $Q \in \QQ_{j-1}$ evenly for all $4^{n_j}$ subcubes of $Q$ contained in $\QQ_j$. Let $Q_j(x)$ be a rectangle in $\QQ_j$ that contains $x$. Since $\mu(Q) = \prod_{i=1}^j 4^{-n_i}$ and $\ell_j = \ell(Q) = \prod_{i=1}^j \lambda_i^{n_i}$ for all $Q \in \QQ_j$ we have
\begin{align*}
  \liminf_{j \to \infty} \frac{\log\mu(Q_j(x))}{\log\ell(Q_j(x))} &= \liminf_{j \to \infty} \frac{-n_j\log 4 - \sum_{i=1}^{j-1}n_i\log 4}{n_j\log\lambda_j + \sum_{i=1}^{j-1}n_i\log\lambda_i} \\
  &\ge \liminf_{j \to \infty} 2^{-1/j} \frac{n_j\log 4}{n_j(1+1/j)\log 2} = 2
\end{align*}
provided that $n_j$ (depending on $\Lambda$ and $n_1,\ldots,n_{j-1}$) is chosen large enough. Thus $\dimh(E) = 2$ by \cite[Lemma 4.1 and Proposition 3.1]{KaenmakiRajalaSuomala2012b} and \cite[Proposition 2.3]{Falconer1997}.

If $j$ is even, then the distance between any two cubes in $\QQ_j$ is at least $D_j = \lambda_j^{-1}\ell_j - 2\ell_j = \ell_j(\lambda_j^{-1}-2) > 0$. Choose $d_j = \min\{ D_j/3, (\#\QQ_j\ell_j)^{-1/(s_j-1)} \} > 0$. Observe that we may now choose $n_{j+1}$ (depending on $\Lambda$, $S$, and $n_1,\ldots,n_j$) large enough so that the ratio
\begin{equation*}
  \frac{\ell_{j+1}}{d_j} = \frac{\prod_{i=1}^{j+1} \lambda_i^{n_i}}{d_j}
\end{equation*}
is as small as we wish. Thus the length of $\partial E_r$ is at most a constant times $\#\QQ_j \ell_j$ for all $cd_j < r < d_j$, where $c>0$ is as small as we like. The desired estimate $\ldims(E)\leq s_j$
follows from this since $\#\QQ_j \ell_j d_j^{s_j-1}$ remains bounded for all even $j$.

Finally, to prove that $\HH^2(E)=0$ it suffices to show that if $j$ is even, then $\sum_{Q \in \QQ_j} \ell(Q)^2$ can be made arbitrary small by choosing $n_{j}$ large enough. But this is obvious since
$\sum_{Q \in \QQ_j} \ell(Q)^2 = \bigl( \prod_{i=1}^{j-1} (4\lambda_i^2)^{n_i} \bigr)(4\lambda_j^2)^{n_j}
\leq (4\lambda_j^2)^{n_j}$, and here $4\lambda_j^2<1$.
\end{ex}


\end{document}